\documentclass[11pt]{article}

\usepackage[T1]{fontenc}
\usepackage[utf8]{inputenc}
\usepackage[a4paper,left=2.6cm,right=2.6cm,top=2.6cm,bottom=2.6cm]{geometry}
\usepackage{indentfirst}
\usepackage{amssymb}
\usepackage{verbatim}
\usepackage{amsthm}
\usepackage{amsmath}
\usepackage{graphicx}
\usepackage{epstopdf}
\usepackage{amsfonts}
\usepackage[numbers]{natbib}
\usepackage{enumitem}

\usepackage{microtype}
\usepackage{pifont}
\usepackage{multirow}
\usepackage{mathrsfs}
\usepackage[title]{appendix}
\usepackage{xcolor}
\usepackage{textcomp}
\usepackage{manyfoot}
\usepackage{booktabs}
\usepackage{algorithm}
\usepackage{algorithmicx}
\usepackage{algpseudocode}
\usepackage{listings}

\usepackage{xcolor}
\usepackage{hyperref}

\hypersetup{
	colorlinks=true,
	linkcolor=blue,   
	citecolor=red,    
	urlcolor=blue
}

\usepackage[nameinlink,capitalise,noabbrev]{cleveref}

\theoremstyle{plain}
\newtheorem{theorem}{Theorem}
\newtheorem{proposition}[theorem]{Proposition}
\newtheorem{assumption}[theorem]{Assumption}
\newtheorem{lemma}[theorem]{Lemma}
\newtheorem{corollary}[theorem]{Corollary}
\theoremstyle{definition}

\theoremstyle{remark}

\def\R{\mathbb R}
\def\RR{\mathbb{R}}

\def\cG{{\cal G}}

\def\cQ{{\cal Q}}\def\cS{{\cal S}}

\newcommand{\wh}{\widehat}

\def\al{{\alpha}}
\def\ga{{\gamma}}\def\sig{\sigma}

\def\epsN{{\epsilon_n}}\def\etaN{{\eta_n}}\def\rhoR{{\rho_R}}

\def\<{\left<}\def\>{\right>}\def\({\left(}\def\){\right)}

\newfam\msbmfam\font\tenmsbm=msbm10\textfont
\msbmfam=\tenmsbm\font\sevenmsbm=msbm7
\scriptfont\msbmfam=\sevenmsbm
\def\E{\mathbb E}

\newcommand{\1}{\mathbf 1}

\def\al{{\alpha}}
\def\eps{{\epsilon}}\def\ga{{\gamma}}

\def\<{\left<}\def\>{\right>}\def\({\left(}

\newcommand{\abs}[1]{\left|#1\right|}\newcommand{\dd}{\,\mathrm d}
\newcommand{\Pp}{\mathbb P}

\def\cG{{\cal G}}

\def \cS{{\cal S}}

\providecommand{\ip}[2]{\left\langle #1,#2\right\rangle}

\providecommand{\e}{\mathrm e}
\providecommand{\epsn}{\varepsilon_n}

\providecommand{\cC}{\mathcal C}
\providecommand{\F}{\mathcal F}
\providecommand{\cQ}{\mathcal Q}
\crefname{assumption}{Assumption}{Assumptions}
\Crefname{assumption}{Assumption}{Assumptions}
\crefname{definition}{Definition}{Definitions}
\Crefname{definition}{Definition}{Definitions}
\crefname{remark}{Remark}{Remarks}
\Crefname{remark}{Remark}{Remarks}
\crefname{example}{Example}{Examples}
\Crefname{example}{Example}{Examples}

\begin{document}
\title{Pathwise uniqueness for degenerate stochastic differential equations
	with H\"older continuous coefficients
	\thanks{ This work is supported by
		the National Key R\&D Program of China (2022YFA1006102),
		the National Natural Science Foundation of China (12471418, 12595294, 12231002),
		and the New Cornerstone Science Foundation (NCI202501).}}

\author{
	Jie Xiong
	\thanks{Department of Mathematics and Shenzhen International Center for Mathematics,
		Southern University of Science and Technology, Shenzhen 518055, China.
		(\texttt{xiongj@sustech.edu.cn}).}
	\and
	Wen Xu
	\thanks{School of Mathematical Sciences, Peking University,
		Beijing 100871, China.
		(\texttt{xuwen@math.pku.edu.cn}).}
}

\date{}
\maketitle

\begin{abstract}

We study pathwise uniqueness for cyclic catalytic stochastic differential equations whose state-dependent square-root diffusion coefficients are non-Lipschitz and degenerate on the boundary. The approach is the direct construction of a strong solution using a Malliavin compactness criterion. 
The key is the development of a new family of boundary-sensitive weighted Malliavin
estimates for the tangent processes of the smooth approximations. Pathwise uniqueness then follows from the dual Yamada--Watanabe argument together with the weak uniqueness available in the literature. 

\end{abstract}

\noindent\textbf{Keywords:} Malliavin compactness; strong existence; cyclically catalytic branching; state-dependent coefficients; stochastic differential equation; pathwise uniqueness.

\medskip
\noindent\textbf{MSC Classification:} Primary 60H10; Secondary 60H07, 60J60, 60J80, 60K35.

\section{Introduction}\label{sec1}

\setcounter{equation}{0}
\renewcommand{\theequation}{\thesection.\arabic{equation}}
\renewcommand{\theHequation}{\thesection.\arabic{equation}}

Consider a particle system initialized at time $t=0$ with $k_n$ particles at positions
$x_i^n\in\mathbb R$, $i=1,2,\ldots,k_n$. Each particle carries an exponential
clock with rate $n\lambda$, independently of all other particles, where
$\lambda>0$ is a fixed constant. When its clock rings, the particle either
splits into two offspring or dies, each with probability $1/2$. We label each
particle by a multi-index $\alpha$. For example, $\alpha=(3,1)$ represents a
second-generation particle, namely the first offspring of the third particle
in the first generation. Between branching times, each particle $\alpha$
evolves according to a Brownian motion $B^\alpha$, which is independent of all
other random processes. We then define the measure-valued process
\[
 \mu_t^n=\frac1n\sum_{\alpha\sim t}\delta_{x_\alpha^n(t)},
\]
where $\alpha\sim t$ means that particle $\alpha$ is alive at time $t$.

Under suitable conditions, it was proved that $\{\mu^n_\cdot\}$ converges
weakly to $\mu_\cdot$, which is the unique solution to the following martingale
problem. Namely, $\mu_\cdot$ is an $\mathcal M_F(\mathbb R)$-valued process such
that, for every $\phi\in C_b^2(\mathbb R)$,
\[
 M_t(\phi):=\langle\mu_t,\phi\rangle-\langle\mu_0,\phi\rangle
 -\int_0^t\left\langle\mu_s,\frac12\phi''\right\rangle\dd s
\]
is a continuous martingale with quadratic variation
\[
 \langle M(\phi)\rangle_t=\int_0^t\langle\mu_s,\lambda\phi^2\rangle\dd s.
\]
Here $\mathcal M_F(\mathbb R)$ denotes the space of finite Borel measures on
$\mathbb R$, and $\langle\mu,f\rangle$ denotes integration of $f$ with respect
to $\mu$. The limiting process is called a Dawson--Watanabe process, a
superprocess, or a super-Brownian motion (SBM). We refer the reader to Dawson
\cite{Daw} and Watanabe \cite{Wat} for the origins of these processes.

Independently, Konno and Shiga \cite{KS} and Reimers \cite{Rei} proved that the
superprocess $\mu_t$ admits a density, i.e. $\mu_t(\mathrm{d} x)=v_t(x)\mathrm{d} x$, where $v_t(x)$ is the unique weak solution to the following stochastic partial
differential equation (SPDE):
\begin{equation}\label{eq2a5}
 \partial_t v_t(x)=\frac12\Delta v_t(x)+\sqrt{\lambda v_t(x)}\,\dot B_{t,x},
 \qquad v_0=\frac{\mu_0(\dd x)}{\dd x}.
\end{equation}
Here $\dot B_{t,x}$ denotes space-time white noise.

The pathwise uniqueness problem for \eqref{eq2a5} has attracted considerable
attention. When the underlying spatial motion is collapsed to a single point,
the equation reduces to a Feller diffusion, whose pathwise uniqueness follows
from the classical Yamada--Watanabe argument. When the noise is white in time
and colored in space, this problem was solved by Mytnik, Perkins and Sturm
\cite{MPS}. When $\sqrt{v_t(x)}$ is replaced by $v_t(x)^\alpha$ with
$\alpha\ge3/4$, pathwise uniqueness was established by Mytnik and Perkins
\cite{MP}. There are also nonuniqueness results below the critical H\"older
exponent. Burdzy, Mueller and Perkins \cite{BMP} proved pathwise nonuniqueness
for nonnegative solutions when $0<\alpha<1/2$. Mueller, Mytnik and Perkins
\cite{MMP} proved nonuniqueness for signed solutions when
$1/2\le\alpha<3/4$, which includes the interesting case of $\al=1/2$, by considering the equation in
which $v_t(x)^\alpha$ is replaced by $|v_t(x)|^\alpha$. Instead of working directly with the
density-valued SPDE, Xiong \cite{Xio} considered the distribution-function-valued process
associated with the super-Brownian motion and  proved pathwise uniqueness for this alternative SPDE by an extended
Yamada--Watanabe argument.

When the branching rate depends on the concentration of another SBM population, the density process $v_t$ satisfies
\begin{equation*}
 \partial_t v_t(x)=\frac12\Delta v_t(x)+\sqrt{u_t(x)v_t(x)}\,\dot B_{t,x},
 \qquad v_0=\mu,
\end{equation*}
where $u_t(x)$ is the density of another SBM independent of the noise $\dot{B}_{tx}$. In this setting, $u_t(x)$ is called the
catalyst and $v_t(x)$ the reactant.  The resulting process
$v_t(x)$ is known as an {\em SBM with an SBM catalyst},   which was studied by Dawson and Fleischmann \cite{DF}.

A further study is the following mutually catalytic branching model: 
\begin{equation*}
 \left\{
 \begin{aligned}
  \partial_t v_t(x)&=\frac12\Delta v_t(x)
  +\lambda_1\sqrt{u_t(x)v_t(x)}\,\dot B_{t,x},\\
  \partial_t u_t(x)&=\frac12\Delta u_t(x)
  +\lambda_2\sqrt{u_t(x)v_t(x)}\,\dot W_{t,x},
 \end{aligned}
 \right.
\end{equation*}
where $\dot W_{t,x}$ is independent of $\dot B_{t,x}$. In the symmetric case
$\lambda_1=\lambda_2$, Dawson and Perkins \cite{DP} established weak uniqueness
using self-duality.

A natural extension is  the following cyclically catalytic model:
\begin{equation*}
 \partial_t u_t^k(x)=\frac12\Delta u_t^k(x)
 +\sqrt{u_t^k(x)u_t^{k+1}(x)}\,\dot W_{t,x}^k,
 \qquad k=1,2,\ldots,K,
\end{equation*}
where we use the convention $u_t^{K+1}(x)=u_t^1(x)$. Although weak uniqueness remains open for this model, Fleischmann and Xiong
\cite{FX} constructed a strong Markov solution via a Markov selection argument
and studied its long-time segregation and extinction-survival properties.

This motivates the finite-dimensional uniqueness problem. When the underlying
spatial motion is collapsed to a single point, one obtains the following $K$ dimensional affine cyclic
catalytic  stochastic
differential equations (SDE):
\begin{equation}\label{eq0425a}
 \dd X_t^k=\left(\sum_{j=1}^K q_{kj}X_t^j+\theta_k\right)\dd t
 +\sqrt{\gamma_kX_t^kX_t^{k+1}}\,\dd W_t^k,
 \qquad k=1,2,\ldots,K,
\end{equation}
where $q_{ij}\ge0$ for $i\ne j$, $\theta_k \geq 0,\gamma_k>0$, and
$W^1,\ldots,W^K$ are mutually independent standard Brownian motions. We use
the cyclic conventions $X_t^{K+1}=X_t^1$ and $q_{K,K+1}=q_{K,1}$. In
particular, if $(q_{ij})$ is the $Q$-matrix of a Markov chain, then
\eqref{eq0425a} describes a super-Markov chain. 

In this direction, Athreya et al. \cite{ABBP} studied
\begin{equation}\label{eq0425b}
 \dd X_t^k=b_k(X_t)\dd t+\sqrt{X_t^k\gamma_k(X_t)}\,\dd W_t^k,
 \qquad k=1,2,\ldots,K.
\end{equation}
They proved weak uniqueness under the assumptions that $\gamma_k(x)>0$ for all
$x$, that $b_k(x)>0$ for $x\in\partial\RR^K_+$, and
that  $b_k(x)$ have linear growth. These assumptions, however, are
not satisfied by \eqref{eq0425a}.
More general equations were studied by Bass and Perkins (\cite{BP}, \cite{BP2}). However, SDE (\ref{eq0425a}) is still not covered
by these extensions. Finally, Dawson and Perkins \cite{DP2} obtained a further extension by
considering the following SDE:
\begin{equation}\label{eq0425c}
 \dd X_t^k=b_k(X_t)\dd t
 +\sqrt{\gamma_k(X_t)X_t^kX_t^{k+1}}\,\dd W_t^k,
 \qquad k=1,2,\ldots,K.
\end{equation}
Under suitable H\"older continuity and boundary conditions  on the coefficients, with
$\gamma_k(x)$ strictly positive and locally bounded away from zero, they proved
weak uniqueness for the associated martingale problem.

The weak uniqueness results naturally lead to the stronger question of
pathwise uniqueness for \eqref{eq0425a} and \eqref{eq0425c}. In the case
$q_{ij}=0$ for $i\ne j$ and $\theta_k=0$, Dawson, Fleischmann and Xiong
\cite{DFX} established pathwise uniqueness for \eqref{eq0425a}. Their argument relies crucially on the
special absorbing property that any component, once it reaches zero, remains
at zero forever.  In the same spirit with $\sqrt{\ga_k X^k_tX^{k+1}_t}$ being replaced by $\sqrt{X^k_tf_k(X_t)}$, 
the pathwise
uniqueness problem was studied by He \cite{He}. The primary objective of this article is to construct a strong solution to the state-dependent equation \eqref{eq0425c} under the assumptions specified below. Combined with the corresponding weak uniqueness result, this further yields pathwise uniqueness. The affine equation \eqref{eq0425a} is then obtained as a special case. In particular, for the affine system, the strong well-posedness result remains valid under the critical condition
$
q_{i,i+1}\ge \frac{\gamma_i}{4}, i=1,\ldots,K.
$

Finally, we point out that pathwise uniqueness results for multidimensional
SDEs with non-Lipschitz coefficients remain limited. 
 Apart from the works of DeBlassie
\cite{Deb} and Swart \cite{Swa2},  few results seem to be available in
this direction.  In
particular, DeBlassie \cite{Deb} posed pathwise uniqueness for equations of the
form \eqref{eq0425b} as an open problem. The results below give a positive
partial answer under explicit structural conditions. We also point out here that when $K=2$, we have established in \cite{XX} the pathwise uniqueness of \eqref{eq0425a} without any extra condition such as \eqref{eq:reserve-def} below. This article addresses arbitrary cyclic dimension, more importantly,
treats the more general state-dependent model \eqref{eq0425c}.

 The initial condition is an interior point of the nonnegative orthant:
\begin{equation}\label{eq:interior-initial}
 X_0=x_0=(x_0^1,\ldots,x_0^K)\in(0,\infty)^K.
\end{equation}
A solution of  \eqref{eq0425c}, when it exists, will be denoted by
$X_t=(X^1_t,\ldots,X^K_t)$.  We refer to Ikeda and Watanabe \cite{iw} and Cherny
\cite{Che} for the relevant notions of weak, strong, and pathwise solutions.

The key step in the proof is to apply the global Malliavin compactness
argument. Related Malliavin-compactness arguments were used by Meyer-Brandis and
Proske \cite{MeyerProske} for SDEs with bounded measurable drift and
additive nondegenerate noise, and by Shaposhnikov \cite{Shaposhnikov}
for a special two-dimensional degenerate system with additive noise. In contrast, our diffusion coefficient is state dependent, of square-root
type, and degenerates at the boundary. 

To this end, we impose conditions ensuring that,
when an individual coordinate approaches zero, the corresponding drift is
sufficiently strong relative to the square-root diffusion coefficient.
 Therefore, we impose the following conditions.

 \begin{assumption}\label{ass:regularity}
 	For each $i=1,\ldots,K$:
 	\begin{enumerate}[label=\textnormal{(\roman*)},leftmargin=3em]
 		\item $b_i$ and $\gamma_i$ admit $C^2$ extensions to an open neighborhood of
 		$\mathbb R_+^K$;
 		\item there are constants
 		$0<\underline\gamma\le\overline\gamma<\infty$ such that
 		\begin{equation*}
 			\underline\gamma\le\gamma_i(x)\le\overline\gamma,
 			\qquad x\in\mathbb R_+^K;
 		\end{equation*}
 		\item all derivatives of $b_i$ and $\gamma_i$ up to order two are bounded
 		on compact subsets of $\mathbb R_+^K$;
 		\item the drift has linear growth:
 		\begin{equation*}
 			|b(x)|\le C_b(1+|x|),
 			\qquad x\in\mathbb R_+^K.
 		\end{equation*}
 	\end{enumerate}
 \end{assumption}

 \begin{assumption}\label{ass:structure}
 	For every $i=1,\ldots,K$, there exist functions
 	$\beta_i,m_i:\mathbb R_+^K\to\mathbb R$ admitting $C^2$ extensions to an open
 	neighborhood of $\mathbb R_+^K$,  such that
 	\begin{equation}\label{eq:drift-decomp}
 		b_i(x)=x_i\beta_i(x)+m_i(x),
 		\quad m_i(x)\ge0,
 		\quad x\in\mathbb R_+^K,
 	\end{equation}
 	and
 	\begin{equation}\label{eq:reserve-def}
 		r_i(x):=
 		m_i(x)-\frac{1}{4}\gamma_i(x)x_{i+1}\ge0,
 		\quad x\in\mathbb R_+^K.
 	\end{equation}
 	Moreover, for every $R>0$, there exists $C_R<\infty$ such that
 	\begin{equation}\label{eq:weighted-transverse}
 	x_j|\partial_jr_i(x)|^2\le C_R r_i(x)
 	\end{equation}
 	for all $i=1,\ldots,K$, all $j\ne i$, and all $x\in[0,R]^K$.
 \end{assumption}

The following is the main result on the strong existence of solutions to \eqref{eq0425c}.

\begin{theorem}[Strong existence for  \eqref{eq0425c}]\label{thm:main}
Suppose that  \eqref{eq:interior-initial} holds and Assumptions \ref{ass:regularity} and \ref{ass:structure} are satisfied. Then \eqref{eq0425c} admits a
continuous nonnegative strong solution adapted to the completed natural
filtration of the prescribed Brownian motion $W$. Moreover, for every $T>0$,
\begin{equation}\label{eq:global-second-moment}
 \mathbb E\sup_{0\le t\le T}|X_t|^2\le C_T(1+|x_0|^2).
\end{equation}
\end{theorem}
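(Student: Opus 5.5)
We construct the strong solution by the Malliavin compactness route described in the introduction. Fix $T>0$ and the Brownian motion $W$ on its completed natural filtration.

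\textbf{Step 1: smooth approximations.} For $\epsilon\in(0,1]$, replace the diffusion coefficient $\sqrt{\gamma_k(x)x_kx_{k+1}}$ by a smooth, globally Lipschitz function $\sigma_k^\epsilon(x)$. One concrete choice is $\sqrt{\gamma_k(x)\,\phi_\epsilon(x_k)\phi_\epsilon(x_{k+1})+\epsilon^2}$, cut off outside a large ball $[0,R_\epsilon]^K$, where $\phi_\epsilon$ is a smooth positive regularization of $x\mapsto x^+$. Similarly localize $b$. The approximating SDE then has a unique strong solution $X^\epsilon$ adapted to $W$, and $X^\epsilon_t\in\mathbb D^{1,2}$.

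\textbf{Step 2: uniform moment bounds and boundary behaviour.} Itô's formula for $|X^\epsilon|^2$, linear growth of $b$, boundedness of $\gamma$, and BDG give the bound \eqref{eq:global-second-moment} for $X^\epsilon$, uniformly in $\epsilon$. Since $m_i\ge0$, the drift points inward at $\{x_i=0\}$, so the limit is nonnegative. The role of \eqref{eq:reserve-def} is the scale computation for $\log X^i$. Its Itô correction is $-\frac12\gamma_iX^{i+1}/X^i$, while the drift contributes $\beta_i+m_i/X^i$. Since $m_i\ge\frac14\gamma_iX^{i+1}$, the quantities $X^i$ behave like squared Bessel processes of dimension $\ge 2$ locally in the natural time change. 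This yields uniform integrability of singular weights such as $\int_0^T X^{i+1}_s/X^i_s\,\dd s$, or of fractional versions $\int_0^T (X^{i+1}_s)^{a}/(X^i_s)^{a}\dd s$ for some $a<1$, up to localization by $\tau_R=\inf\{t:|X^\epsilon_t|\ge R\}$.

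\textbf{Step 3: weighted Malliavin estimates (main obstacle).} The derivative $D_rX^\epsilon_t$ solves the linear tangent equation
\[
 \dd J_t=\nabla b(X^\epsilon_t)J_t\,\dd t+\sum_k\nabla\sigma_k^\epsilon(X^\epsilon_t)J_t\,\dd W^k_t .
\]
Here $\partial_k\sigma^\epsilon_k\approx\frac12\sqrt{\gamma_kX^{k+1}/X^k}$, which blows up at the boundary, so a direct Gronwall estimate fails. Instead, we estimate a boundary-sensitive quantity such as $\sum_i w_i(X_t)|J^i_t|^2$ with weights like $w_i(x)=x_i^{-\delta}$, or $|J^i|^2/(X^i+\epsilon)$. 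The aim is that in Itô's formula the singular terms $\frac14\gamma_iX^{i+1}/X^i\cdot|J^i|^2$ coming from the quadratic variation are absorbed by the inward drift $m_i/X^i$, with the reserve $r_i\ge0$ leaving a nonnegative (good-sign) term. The cross terms generated by $\partial_j r_i$, $j\neq i$, are where \eqref{eq:weighted-transverse} enters: $x_j|\partial_jr_i|^2\le C_Rr_i$ lets Cauchy--Schwarz bound them by $\eta\, r_i|J^i|^2/X^i$ plus harmless terms. Combined with stochastic Gronwall and the localization by $\tau_R$, this should give
\[
 \sup_\epsilon \mathbb E\Big[\sup_{t\le T\wedge\tau_R}\big(|X^\epsilon_t|^2+\textstyle\int_0^t|D_rX^\epsilon_t|^2\dd r\big)\Big]<\infty,
\]
and a Hölder-in-$r$ estimate $\mathbb E|D_rX^\epsilon_t-D_{r'}X^\epsilon_t|^2\le C|r-r'|^{\alpha}$. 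I expect that tuning the weight exponent so that every singular term has a good sign is the delicate point. If the quadratic weight fails, I would try $p$-th moments of $\log$-type weights first.

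\textbf{Step 4: compactness and identification.} By the Da Prato--Malliavin--Nualart compactness criterion (relative compactness in $L^2(\Omega)$ from the bounds on $\|X^\epsilon_t\|_{\mathbb D^{1,2}}$ plus the fractional Sobolev condition on $D_\cdot X^\epsilon_t$), the family $X^\epsilon_t\wedge$ localized is precompact in $L^2$ for each $t$. With tightness in $C([0,T])$ from Step 2, we extract a subsequence $X^{\epsilon_n}\to X$ in probability uniformly on $[0,T]$. Since each $X^{\epsilon_n}$ is $W$-adapted, so is $X$. Passing to the limit in the drift and stochastic integrals uses continuity of $b$ and of $\sqrt{\gamma_kx_kx_{k+1}}$ on $\mathbb R_+^K$ together with uniform moments. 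This shows $X$ solves \eqref{eq0425c}, and letting $R\to\infty$ removes the localization. Fatou's lemma transfers \eqref{eq:global-second-moment} to $X$.
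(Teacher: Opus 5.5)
There is a genuine gap. Your architecture matches the paper's: smooth approximations, Da Prato--Malliavin--Nualart compactness, the weight $|J^i|^2/(X^i+\epsilon)$, and the heuristic that the It\^o correction $\frac14\gamma_iX^{i+1}(J^i)^2/(X^i)^2$ is offset by $m_i(J^i)^2/(X^i)^2$. But Step~3 is only announced (``should give'', ``I expect''), and what it leaves out is exactly what makes it true.

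\emph{The critical cross term.} When $r_i\equiv0$, the diagonal cancellation leaves \emph{no} good term. Meanwhile $\partial_{i+1}\bigl(\frac14\gamma_ix_{i+1}\bigr)$ produces the singular cross term $\frac{\gamma_i}{2X^i}J^iJ^{i+1}$. For $X^{i+1}\gg X^i$, no Cauchy--Schwarz splitting bounds it by $C\bigl[(J^i)^2/X^i+(J^{i+1})^2/X^{i+1}\bigr]$. It has to cancel against the $J^iJ^{i+1}$ term in the quadratic-variation contribution $\frac1{X^i}\bigl|(\partial_i\sigma_i-\sigma_i/X^i)J^i+\partial_{i+1}\sigma_i\,J^{i+1}\bigr|^2$. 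For $\sqrt{\gamma_ix_ix_{i+1}}$ with constant $\gamma_i$ the two cancel exactly. For regularized coefficients the cancellation is only approximate, and the paper absorbs the error with the small residual reserve $\Delta_i-\rho r_i$ via the far-region Lemma~\ref{lem:affine-far-absorption}. Your use of \eqref{eq:weighted-transverse} handles the $\partial_jr_i$ terms, but not this one.

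\emph{Regularization and localization.} Because these cancellations are sharp, the regularization must preserve them. The paper's $g_n(u)=u/\sqrt{u+\varepsilon_n}$ with $d_i=x_i+\varepsilon_n/8$ gives $\alpha_\eta\le\frac14$ and strictly positive approximations. Your $\sqrt{\gamma_k\phi_\epsilon(x_k)\phi_\epsilon(x_{k+1})+\epsilon^2}$ is nondegenerate on $\{x_k=0\}$, so $X^\epsilon$ leaves the orthant and the weights $x_i^{-\delta}$ or $1/(X^i+\epsilon)$ break down. Likewise, bounds up to $\tau_R$ are not $\mathbb D^{1,2}$ bounds on a fixed random variable. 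The paper instead puts a cutoff $\rho_R$ with $|\nabla\rho_R|^2\le C_R\rho_R(1-\rho_R)$ into the coefficients, so the resulting singular cutoff terms are absorbed by $\Delta_i\ge\rho(1-\rho)m_i$ (Lemma~\ref{lem:cutoff}).

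\emph{Step~2 is wrong.} With $\mathrm du=\frac14\gamma_iX^{i+1}\,\mathrm dt$, the condition $m_i\ge\frac14\gamma_iX^{i+1}$ gives a squared Bessel process of dimension $4m_i/(\gamma_iX^{i+1})\ge1$, not $\ge2$. Your own $\log$ computation leaves the drift $(m_i-\frac12\gamma_iX^{i+1})/X^i$, which can be negative. At criticality $X^i$ can hit $0$, and on that event $\int_0^TX^{i+1}_s/X^i_s\,\mathrm ds=\infty$. So nothing may rest on integrability of $1/X^i$.

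\emph{Fractional regularity (MC2(ii)) lacks its key idea.} Set $\mathcal Q_s(u)=\sum_iu_i^2/(X^{\epsilon,i}_s+\eta)$. Weighted stability only reduces $\mathbb E|D_{r+h}X^\epsilon_t-D_rX^\epsilon_t|^2$ to $\mathbb E\,\mathcal Q_{r+h}\bigl(\sigma^\epsilon_\ell(X^\epsilon_{r+h})e_\ell-D^\ell_rX^\epsilon_{r+h}\bigr)$. Showing this initial defect is small in $h$ is the real content, and ``stochastic Gronwall'' does not do it. On $[r,r+h]$ the diagonal part of the tangent flow is damped at a rate that can be as large as $C_R/x_\ell$, so the defect need not be small when $x_\ell(r)\lesssim h$.

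The paper handles this in three steps:
\begin{enumerate}
\item It factors the diagonal flow exactly, $\Phi_\ell(X_r)J_{r,s}/\Phi_\ell(X_s)=e^{-A_{r,s}-B_{r,s}}$, with $0\le\lambda_\ell\le C_R/x_\ell$ and $B$ a bounded-coefficient semimartingale.
\item It proves $\mathbb E[(1-e^{-A_{r,r+h}})^2\mid\mathcal F_r]\le C_R(1\wedge h/x_\ell(r))$ by a square-root downcrossing estimate.
\item It integrates in $r$ using the occupation bound $\mathbb E\int_0^T\rho_s\,p(x_{\ell+1}(s))\mathbf 1_{\{x_\ell(s)\le a\}}\,\mathrm ds\le C\sqrt a$ (the $\sqrt a$ reflects the dimension-one behaviour), obtaining $\int_0^{t-h}\mathbb E|D_{r+h}X_t-D_rX_t|^2\,\mathrm dr\le Ch^{1/2}$.
\end{enumerate}
That integrated bound is all the compactness criterion needs. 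The pointwise bound $\mathbb E|D_rX_t-D_{r'}X_t|^2\le C|r-r'|^\alpha$ you state is neither needed nor supported by anything in your argument.
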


By Theorem~\ref{thm:main}, a strong solution to \eqref{eq0425c} exists and hence, in particular, a weak solution exists. Dawson and Perkins \cite[Lemma~5]{DP2} show that every weak solution remains in the natural state space
\[
S=\left\{x\in\mathbb R_+^K:\prod_{i=1}^K(x_i+x_{i+1})>0\right\}.
\]
Thus, if $x\in S$ and $x_i=0$, then necessarily $x_{i+1}>0$. By Assumption~\ref{ass:structure},
\[
b_i(x)=m_i(x)\ge \frac{1}{4}\gamma_i(x)x_{i+1}>0.
\]
Together with the  Assumption~\ref{ass:regularity}, Dawson and Perkins \cite[Theorem~4]{DP2} gives the weak uniqueness for \eqref{eq0425c}.

It follows from the dual Yamada-Watanabe argument (cf. Theorem 3.2 in \cite{Che}) that the existence of the strong solution, together with the weak uniqueness (cf. Theorem 4 and Lemma 5 in \cite{DP2}), 
implies the pathwise uniqueness.

\begin{corollary}[Pathwise uniqueness for  \eqref{eq0425c}]\label{cor:pathwise-general}
Under the hypotheses of Theorem \ref{thm:main},  the pathwise uniqueness of the solution to the SDE
\eqref{eq0425c} holds. 
\end{corollary}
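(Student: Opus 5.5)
The corollary follows by combining Theorem~\ref{thm:main} with the weak uniqueness of Dawson and Perkins through the dual Yamada--Watanabe theorem (Cherny, \cite[Theorem~3.2]{Che}). That theorem says: if for some initial law there is a strong solution and uniqueness in law holds, then pathwise uniqueness holds. The proof therefore reduces to checking its two hypotheses for \eqref{eq0425c} with deterministic initial condition $x_0\in(0,\infty)^K$.

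\textbf{Step 1: strong existence.} This is Theorem~\ref{thm:main}, which we assume. It gives a continuous nonnegative solution on $\mathbb R_+^K$ adapted to the completed filtration of $W$, defined for every prescribed Brownian motion. The second moment bound \eqref{eq:global-second-moment} excludes explosion, so the solution is global. In particular a weak solution exists.

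\textbf{Step 2: uniqueness in law among all weak solutions started at $x_0$.} The hypothesis of Cherny's theorem is weak uniqueness among all weak solutions, not only the one built in Theorem~\ref{thm:main}, so we need the Dawson--Perkins result in that generality.
\begin{itemize}
\item By \cite[Lemma~5]{DP2}, any weak solution from $x_0\in(0,\infty)^K\subset S$ stays in $S$ for all time.
\item On $S$, if $x_i=0$ then $x_{i+1}>0$. Assumption~\ref{ass:structure} then gives $b_i(x)=m_i(x)\ge \tfrac14\gamma_i(x)x_{i+1}>0$, which is the boundary drift condition of \cite{DP2}.
\item Assumption~\ref{ass:regularity} supplies the remaining hypotheses of \cite[Theorem~4]{DP2}: $C^2$ and hence locally H\"older coefficients, $\gamma_i$ bounded between $\underline\gamma$ and $\overline\gamma$, and linear growth of $b$.
\end{itemize}
Hence the martingale problem for the generator of \eqref{eq0425c} is well posed from $x_0$. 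Every weak solution solves this martingale problem, so uniqueness in law holds.

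\textbf{Step 3: conclusion.} Let $(X,W)$ and $(\tilde X,W)$ be two solutions on a common filtered space, driven by the same $W$, with $X_0=\tilde X_0=x_0$. By Cherny's theorem both coincide a.s.\ with $F(W)$, where $F$ is the measurable functional from Theorem~\ref{thm:main}. So $X=\tilde X$ a.s.

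\textbf{Main obstacle.} The delicate point is Step 2: matching our hypotheses precisely to those of \cite[Theorem~4]{DP2}. Their result is stated for the martingale problem on $S$. One must check that solutions in our sense, taking values in $\mathbb R_+^K$, are solutions of their martingale problem, which uses Lemma~5 of \cite{DP2} to confine them to $S$. One must also check that their local H\"older and positivity conditions follow from our $C^2$ assumptions on a neighborhood of $\mathbb R_+^K$. If their theorem requires conditions only locally on compacts of $S$, a localization by stopping times together with the non-explosion estimate \eqref{eq:global-second-moment} will close the argument.
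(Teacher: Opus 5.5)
Your proposal is correct and follows the same route as the paper. Strong existence comes from Theorem~\ref{thm:main}, Dawson--Perkins' Lemma~5 confines every weak solution to $S$, and Assumption~\ref{ass:structure} gives $b_i=m_i\ge\frac14\gamma_i x_{i+1}>0$ on $S\cap\{x_i=0\}$, so their Theorem~4 yields weak uniqueness and Cherny's dual Yamada--Watanabe theorem then gives pathwise uniqueness. Your emphasis that uniqueness in law must hold among all weak solutions, not just the constructed one, is right and matches the paper's reliance on Lemma~5.
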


The main contribution of this paper is the use of a Malliavin compactness approach to construct strong solutions for degenerate cyclic catalytic SDEs with state-dependent, non-Lipschitz diffusion coefficients. However, although the general Malliavin compactness framework is well established, its implementation in the present degenerate setting is far from standard: in particular, verifying the second condition of the compactness criterion requires delicate control of the singular behavior near the boundary.  We overcome this difficulty by means of a series of estimates established in Section \ref{sec5}. These estimates
provide both the uniform Malliavin $L^2$ bounds and the fractional
regularity in the Malliavin parameter required for compactness.

 The key innovation, and a
major contribution in its own right,
is the
development of a new family of boundary-sensitive weighted Malliavin
estimates for the tangent processes of the smooth approximations.  Instead
of estimating the Malliavin derivatives in the standard unweighted
$L^2$-norm, we introduce the regularized weighted energy \eqref{eq:Q-def}. The
weight reveals a precise cancellation between the inverse-square-root
singularities generated by the derivatives of the diffusion coefficients
and the incoming catalytic drift.  Under the structural condition
\eqref{eq:reserve-def}, 
this cancellation yields estimates that are uniform in the square-root
regularization parameter. 
 This approach provides the key mechanism that makes the construction of strong solutions possible despite the boundary degeneracy and may also be applicable to other multidimensional stochastic systems with square-root-type singularities.

The rest of this article is organized as follows. Section \ref{sec2}
constructs the smooth localized approximations and establishes their positivity
and basic estimates. Section \ref{sec3} introduces conditions
MC1--MC3 and proves that these conditions imply the existence of a strong
solution. Section \ref{sec4} proves the weighted stability estimate for the
Malliavin tangent equation. Section \ref{sec5} establishes the boundary occupation and
short-time estimates needed to control the singular terms. Section \ref{sec6}
uses these estimates to prove the fractional Malliavin regularity and hence
verify MC2. Section \ref{sec7} verifies MC3 and completes the proof of the
main strong-existence theorem. The appendix contains the proof of an algebraic estimate.

Throughout the article, $C$ denotes a finite constant whose value may change
from line to line. When its dependence is relevant, we indicate it by
subscripts, for example $C_{n,R}$.

\section{Smooth boundary-linearized approximation}\label{sec2}
\setcounter{equation}{0}
\renewcommand{\theequation}{\thesection.\arabic{equation}}
\renewcommand{\theHequation}{\thesection.\arabic{equation}}

As a preparation for the proof of the main result, we construct a sequence of SDEs to approximate the original one.
For $n\ge1$, $ u\ge0$, let
\begin{equation*}
	\epsN=2^{-n},\quad \etaN=\frac{\epsN}{8},\quad
	g_n(u)=\frac{u}{\sqrt{u+\epsN}},\quad
	p_n(u)=g_n(u)^2=\frac{u^2}{u+\epsN}.
\end{equation*}
Extend $g_n$ smoothly to $\R$, the extension is irrelevant because the solutions below remain non-negative.
Throughout the paper, whenever no confusion can arise, we suppress the index $n$ from the notation. Thus, for a fixed $n$, we abbreviate
\[
\eps=\epsN,\quad \eta=\etaN,\quad g=g_n,\quad p=p_n.
\]

For $x\in\mathbb R_+^K$, we define 
\begin{equation*}
	\chi_i(x):=\sqrt{\gamma_i(x)},
	\qquad
	\sigma_i(x):=\chi_i(x)\sqrt{x_ix_{i+1}},
	\qquad 
		\sigma_i^n(x)=\chi_i(x)g(x_i)g(x_{i+1}).
\end{equation*}
For $R>0$, choose $\rhoR\in C_c^1(\R^K;[0,1])$ such that
\begin{equation*}
 \rhoR(x)=1\quad\text{for }\abs{x}\le R,
 \quad
 \rhoR(x)=0\quad\text{for }\abs{x}\ge R+1,
\end{equation*}
and
\begin{equation}\label{eq:cutoff-gradient}
 \abs{\nabla\rhoR(x)}^2\le C_R\rhoR(x)(1-\rhoR(x)), \quad x\in\R^K. 
\end{equation}

In fact, we can define $\rho_R$ as follows:
\[\rho_R(x)=\left\{\begin{array}{ll}
	1&\mbox{ if } |x|\le R,\\
	(|x|-R-1)^2(|x|-R+1)^2&\mbox{ if } R<|x|<R+1,\\
	0&\mbox{ if } |x|\ge R+1.
\end{array}\right.\]
Fix $R>0$. By Assumption \ref{ass:regularity}, we use $C^2$ extensions of
$b_i$ and $\chi_i$ on a neighborhood of the support of $\rho_R$, and keep
the same notation.

 Consider the following localized equation 
\begin{equation}\label{eq:localized-approx}
 \begin{cases}
 \dd X_t^{n,R,i}=\rhoR(X_t^{n,R})b_i(X_t^{n,R})\dd t
 +\rhoR(X_t^{n,R})\sigma_i^n(X_t^{n,R})\dd W_t^i,\\[1mm]
 X_0^{n,R}=x_0 \in(0,\infty)^K.
 \end{cases}
\end{equation}
For each fixed $n$, the coefficients are smooth with at most linear drift and bounded
localized diffusion.  Hence \eqref{eq:localized-approx} has a unique strong solution 
$X^{n,R}_t$ which is Malliavin differentiable (cf. Meyer-Brandis and Proske \cite{MeyerProske}). 
Denote by
$D^\ell_r X^{n,R}_t$ as the Malliavin 
derivative of $X^{n,R}_t$ with respect to the Brownian motion $W^\ell$.

For notational simplicity, we write 
\begin{equation*}
 \rho_t:=\rhoR(X_t^{n,R}),\quad
 x_i(t):=X_t^{n,R,i},\quad
 d_i(t):=x_i(t)+\eta, \quad \beta_i(t) := \beta_i(X_t^{n,R}), 
\end{equation*}
\begin{equation*}
 \chi_i(t):=\chi_i(X_t^{n,R}),\quad
 b_i(t):=b_i(X_t^{n,R}),\quad
 m_i(t):=m_i(X_t^{n,R}),\quad
 r_i(t):=r_i(X_t^{n,R}).
\end{equation*}

\begin{lemma}[Regularized square-root estimates]\label{lem:regularization}
For every $u,v\ge0$, the following assertions hold.
\begin{align}
 &0\le p(u)\le u,\qquad 0\le p'(u)\le1,\label{eq:p-basic}\\
 &\abs{g'(u)}^2\le\frac{16}{u+\eta},\label{eq:gprime-bound}\\
 &\alpha_\eta(u):=
 \frac{\abs{(u+\eta)g'(u)-g(u)}^2}{u+\eta}\le\frac14,\label{eq:alpha-bound}\\
 &\abs{g(u)-g(v)}^2\le\abs{p(u)-p(v)}\le\abs{u-v},\label{eq:g-holder}\\
 &g'(u)+2p(u)g''(u)\ge0,\label{eq:g-convex-combination}\\
 &\frac{g'(u)}{g(u)}\le\frac1u,
 \qquad
 \frac12\le\frac{ug'(u)}{g(u)}\le1,
 \qquad u>0.\label{eq:log-derivative}
\end{align}
\end{lemma}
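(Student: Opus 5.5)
The plan is to prove every assertion by direct computation from the closed forms of $g$, $g'$, $g''$. The cleanest way to get those forms is to substitute $s:=u+\eps\ge\eps$. Then $g=s^{1/2}-\eps s^{-1/2}$, so
\[
g'(u)=\frac{s+\eps}{2s^{3/2}}=\frac{u+2\eps}{2(u+\eps)^{3/2}}>0,\qquad
g''(u)=-\frac{s+3\eps}{4s^{5/2}}<0 .
\]
For $p=g^2=u^2/s$ we get $p'(u)=u(u+2\eps)/s^2$. With these formulas each inequality reduces to comparing two polynomials in $u\ge0$ with positive parameters $\eps$ and $\eta=\eps/8$. Throughout we use $s\ge u+\eta$.

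\eqref{eq:p-basic}: $p\le u$ is equivalent to $u\le u+\eps$. The bound $0\le p'\le1$ is equivalent to $u^2+2\eps u\le(u+\eps)^2$.

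\eqref{eq:gprime-bound}: Since $(u+2\eps)^2\le4s^2$, we have $|g'|^2\le 1/s\le 1/(u+\eta)$, which is much better than the stated constant $16$.

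\eqref{eq:log-derivative}: We have $ug'/g=(u+2\eps)/(2(u+\eps))$, which lies in $[\tfrac12,1]$. Dividing by $u$ gives $g'/g\le 1/u$.

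\eqref{eq:g-holder}: Since $g\ge0$,
\[
|g(u)-g(v)|^2\le|g(u)-g(v)|\,(g(u)+g(v))=|p(u)-p(v)|,
\]
and $p'\le1$ gives the second inequality by the mean value theorem.

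\eqref{eq:g-convex-combination}: Insert the formulas and multiply by $4s^{3}/u^0$-type positive factors. The claim becomes $s^2(s+\eps)\ge u^2(s+3\eps)$. Using $s^2-u^2=\eps(2u+\eps)$, this reduces to
\[
s(2u+\eps)+s^2\ge 3u^2 ,
\]
which is obvious since $s\ge u$.

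The only step with genuine content is \eqref{eq:alpha-bound}, because this is where the choice $\eta=\eps/8$ enters. Writing everything over a common denominator gives
\[
(u+\eta)g'(u)-g(u)=\frac{N(u)}{2s^{3/2}},\qquad N(u)=-u^2+\eta u+2\eps\eta .
\]
So the claim $\alpha_\eta\le\tfrac14$ is equivalent to
\[
|N(u)|\le s\sqrt{s(u+\eta)} .
\]

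The natural comparison $|N|\le s(u+\eta)$ fails near $u=0$: there $N=2\eps\eta$ while $s(u+\eta)=\eps\eta$. Hence the plan is to use instead
\[
\sqrt{s(u+\eta)}\ \ge\ u+\sqrt{\eps\eta}=u+\frac{\eps}{2\sqrt2},
\]
which follows from AM–GM type expansion of $(u+\eps)(u+\eta)$. It then suffices to show
\[
|N|\le u^2+\eta u+\frac{\eps^2}{4}\ \le\ (u+\eps)\Bigl(u+\frac{\eps}{2\sqrt2}\Bigr).
\]
The left inequality uses $2\eps\eta=\eps^2/4$. The right one holds coefficientwise: the coefficient of $u$ on the right is at least $\eps\ge\eta$, and the constant term is $\eps^2/(2\sqrt2)\ge\eps^2/4$.

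This constant-term comparison is exactly where $\eta\le\eps/4$ is needed, so I expect it to be the main (though still elementary) obstacle. As a sanity check, at $u=0$ the required inequality reads $4\eta^2\eps^2\le\eps^3\eta$, i.e. $\eta\le\eps/4$.
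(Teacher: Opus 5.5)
Your proof is correct. For every item except \eqref{eq:alpha-bound}, you carry out the ``elementary calculations'' that the paper leaves to the reader. There is one cosmetic slip: the positive factor that turns \eqref{eq:g-convex-combination} into $s^2(s+\eps)\ge u^2(s+3\eps)$ is $2s^{7/2}$, not ``$4s^3/u^0$''.

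For \eqref{eq:alpha-bound} your route differs from the paper's. The paper rescales $u=\eps a$, $\eta=\eps\delta$ and expands $(a+\delta)(a+1)^3-(a^2-\delta a-2\delta)^2$ exactly into a cubic with nonnegative coefficients. In effect it proves the identity $\tfrac14-\alpha_\eta=P_\delta(a)/\bigl(4(a+\delta)(a+1)^3\bigr)$. You instead keep the square root, bound $|N|\le u^2+\eta u+2\eps\eta$ by the triangle inequality and $\sqrt{(u+\eps)(u+\eta)}\ge u+\sqrt{\eps\eta}$ by AM--GM, and compare two quadratics coefficientwise.

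Your version is shorter and avoids the quartic expansion. It also isolates the only real constraint, $2\eps\eta\le\eps\sqrt{\eps\eta}$, i.e.\ $\eta\le\eps/4$; the constant term $\delta(1-4\delta)$ of the paper's cubic encodes the same condition. The paper's exact identity, by contrast, quantifies the margin $\tfrac14-\alpha_\eta$. That same expansion is reused in the appendix (Lemma~\ref{lem:affine-far-absorption}) to obtain $a_\delta\le 1/8$ on $[0,1]$ and $\tfrac14-a_\delta(u)\ge 3/(32(u+1))$ for $u\ge1$. Your chain as written gives only the qualitative inequality, although the slack in your coefficient comparison would also yield a margin of order $\eps/(u+\eps)$.
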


\begin{proof}
	All the assertions follow from elementary calculations. We only spell out the verification of \eqref{eq:alpha-bound}. Let  $u=\eps a$ and $\eta=\eps\delta$ with $\delta=1/8$. Then
		$$
	\alpha_\eta(u)=
	\frac{(a^2-\delta a-2\delta)^2}
	{4(a+\delta)(a+1)^3}.
	$$
	Hence $\alpha_\eta(u)\le1/4$ is equivalent to
		$$
	(a+\delta)(a+1)^3-(a^2-\delta a-2\delta)^2\ge0.
	$$
	
The left-hand side equals
	$$
	(3+3\delta)a^3+(3+7\delta-\delta^2)a^2
	+(1+3\delta-4\delta^2)a+\delta(1-4\delta),
	$$
	which is nonnegative for $a\ge0$ and $0<\delta\le1/8$.
\end{proof}

\begin{lemma}[Strict positivity and time increments]\label{lem:positivity-increments}
For every fixed $n,R$,
\begin{equation}\label{eq:strict-positive}
 \Pp\bigl(X_t^{n,R,i}>0\text{ for all} \; \;  t\ge0,
 \ \text{all } i \bigr)=1.
\end{equation}
For every $T>0$ there is $C_{T,R}$, independent of $n$, such that
\begin{equation}\label{eq:time-increments}
 \sup_{n\ge1}\E\abs{X_t^{n,R}-X_s^{n,R}}^2
 \le C_{T,R}\abs{t-s},
 \qquad 0\le s<t\le T.
\end{equation}
Moreover,
\begin{equation}\label{eq:approx-sup-bound}
 \sup_{n\ge1}\E\sup_{0\le t\le T}\abs{X_t^{n,R}}^2
 \le (R+1\vee\abs{x_0})^2.
\end{equation}
\end{lemma}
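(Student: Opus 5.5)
For strict positivity \eqref{eq:strict-positive}, I will fix $n,R$ and write $x_i(t)=X^{n,R,i}_t$. The plan is a Feller-type boundary test applied to $\log x_i$ up to the exit time $\tau_\delta=\inf\{t:\min_i x_i(t)\le\delta\}$. Itô's formula gives
\[
\dd\log x_i=\rho_t\frac{b_i(t)}{x_i}\dd t-\frac{\rho_t^2\chi_i(t)^2g(x_i)^2g(x_{i+1})^2}{2x_i^2}\dd t+\rho_t\frac{\chi_i(t)g(x_i)g(x_{i+1})}{x_i}\dd W^i_t .
\]
By the decomposition \eqref{eq:drift-decomp}, $b_i/x_i=\beta_i+m_i/x_i\ge\beta_i$ on $\R^K_+$, and $\beta_i$ is bounded on the support of $\rho_R$. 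The key observation is that $g(u)^2/u^2=1/(u+\epsN)\le\epsN^{-1}$. For fixed $n$, both the Itô correction and the diffusion coefficient of $\log x_i$ are therefore bounded, uniformly in $\delta$, by constants $C_{n,R}$, because $g(x_{i+1})^2=p(x_{i+1})\le x_{i+1}\le R+1$ and $\chi_i^2\le\overline\gamma$. Hence $\log x_i(t\wedge\tau_\delta)\ge\log x_0^i-C_{n,R}t+M_{t\wedge\tau_\delta}$, where $M$ is a martingale with bounded quadratic variation. As $\delta\downarrow0$, the infimum over $[0,T]$ stays finite almost surely, so $\tau_0=\infty$. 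Outside $\{|x|<R+1\}$ the coefficients vanish and the process is frozen, which causes no difficulty. The one subtle point is that the drift is used only through the lower bound $x_i\beta_i$, valid while $x\in\R_+^K$. This is consistent because before $\tau_\delta$ all coordinates are positive.

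For \eqref{eq:approx-sup-bound}, I will argue deterministically. Once $|X_t|\ge R+1$, the coefficients vanish, and the process starts at $x_0$. If $|x_0|\ge R+1$, it stays at $x_0$. Otherwise, by continuity, it cannot cross the sphere $|x|=R+1$: the set where $\rho_R=0$ is absorbing, since the coefficients are Lipschitz for fixed $n$ and uniqueness holds there. So $|X_t|\le (R+1)\vee|x_0|$ for all $t$, and the bound holds pathwise.

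For \eqref{eq:time-increments}, I will write
\[
X_t^i-X_s^i=\int_s^t\rho b_i\,\dd r+\int_s^t\rho\sigma_i^n\,\dd W^i .
\]
On the support of $\rho_R$, the linear growth in Assumption \ref{ass:regularity} gives $|b|\le C_b(R+2)$. Also $|\sigma_i^n|^2\le\overline\gamma\,p(x_i)p(x_{i+1})\le\overline\gamma(R+1)^2$ by \eqref{eq:p-basic}. Combining Cauchy--Schwarz for the drift with Itô's isometry for the martingale term gives $C_R(|t-s|^2+|t-s|)\le C_{T,R}|t-s|$, and this is independent of $n$.

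The only place requiring care is positivity, specifically making sure the bound on $g(u)/u$ really does not blow up at $0$. Here the choice $g(u)=u/\sqrt{u+\epsN}$ is essential: it makes the noise coefficient of $\log x_i$ bounded by $\epsN^{-1/2}$. If I wanted a less explicit route, I would instead cite the comparison with a process that has linear drift $x_i\beta_i$ and multiplicative noise, since such a process is a stochastic exponential and hence stays positive.
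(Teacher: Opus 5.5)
Your proposal is correct and follows essentially the paper's argument. Positivity rests on the same observation, namely that for fixed $n$ the ratio $\sigma_i^n/x_i=\chi_i g(x_{i+1})/\sqrt{x_i+\epsN}$ is bounded while $m_i\ge0$; you exploit it through It\^o's formula for $\log x_i$ with the drift bounded below by $\beta_i$, whereas the paper writes the explicit variation-of-constants formula $x_i(t)=\mathcal E_i(t)\bigl(x_0^i+\int_0^t\mathcal E_i(s)^{-1}\rho_s m_i(s)\,\dd s\bigr)$. The bounds \eqref{eq:approx-sup-bound} and \eqref{eq:time-increments} are obtained exactly as in the paper, from the vanishing of the localized coefficients outside $\{|x|<R+1\}$ together with Cauchy--Schwarz and It\^o's isometry.
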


\begin{proof}
Let
\[
 \tau:=\inf\left\{t\ge0:\min_{1\le i\le K}X_t^{n,R,i}\le0\right\}.
\]
On $[0,\tau]$ all coordinates are nonnegative, and hence
Assumption \ref{ass:structure} gives
$
b_i=x_i\beta_i+m_i.
$  Now, let
\[
 \sigma_i^n(X_t^{n,R})
 =x_i(t)c_i^n(t),
 \qquad
 c_i^n(t):=\chi_i(t)\frac{g(x_{i+1}(t))}{\sqrt{x_i(t)+\eps}}.
\]
Consequently, up to stopping time $\tau$, the $i$th coordinate satisfies the linear equation:
\[
 \dd x_i(t)=\rho_t\beta_i(X_t^{n,R})x_i(t)\dd t
 +\rho_t c_i^n(t)x_i(t)\dd W_t^i
 +\rho_t m_i(t)\dd t.
\]
Let
\[
 \mathcal{E}_i(t)=
 \exp\left\{
 \int_0^t\left(\rho_s\beta_i(X_s^{n,R})
 -\frac12\rho_s^2\abs{c_i^n(s)}^2\right)\dd s
 +\int_0^t\rho_sc_i^n(s)\dd W_s^i
 \right\}.
\]
For fixed $n,R$, the coefficients in this stochastic exponential are bounded on every stopped interval $[0,\tau\wedge T]$. The variation of constants  gives, for $t\le\tau\wedge T$,
\[
 x_i(t)=\mathcal{E}_i(t)
 \left(x_0^i+\int_0^t\mathcal{E}_i(s)^{-1}\rho_sm_i(s)\dd s\right).
\]
The right-hand side is strictly positive because $\mathcal{E}_i>0$, $x_0^i>0$, and $m_i\ge0$.  If $\tau\le T$, continuity and the same formula at $t=\tau$ would give $x_i(\tau)>0$ for every $i$, contradicting the definition of $\tau$.  Hence $\tau>T$ almost surely and \eqref{eq:strict-positive} follows. 
 
The localized coefficients vanish outside $\{\abs{x}<R+1\}$. Therefore,
$\abs{X_t^{n,R}}\le R+1\vee\abs{x_0}$, which  proving \eqref{eq:approx-sup-bound}.  On this bounded state space,  we have 
\[
 \abs{\rhoR(x)b(x)}\le C_R,
 \qquad
 \sum_{i=1}^K\abs{\rhoR(x)\sigma_i^n(x)}^2\le C_R,
\]
uniformly in $n$. Then, 
Cauchy--Schwarz and It\^o isometry applied to \eqref{eq:localized-approx} yield \eqref{eq:time-increments}.
\end{proof}

\section{Main results}\label{sec3}
\setcounter{equation}{0}
\renewcommand{\theequation}{\thesection.\arabic{equation}}
\renewcommand{\theHequation}{\thesection.\arabic{equation}}

In this section, we introduce three conditions, MC1--MC3, and show that they imply convergence of a subsequence of the approximating solutions to a strong solution driven by the prescribed Brownian motion. Their verification for the approximation scheme of Section \ref{sec2} is deferred to the following sections.

Let $D_r^\ell X_t^{n,R,i}$ be the Malliavin derivative of the $i$th coordinate of $X_t^{n,R}$ with respect to $W^\ell$ at time $r$, with the convention that $D_r^\ell X_t^{n,R,i}=0$ for $r>t$. We write
\begin{equation*}
	\abs{D_rX_t^{n,R}}^2
	:=\sum_{i,\ell=1}^K\abs{D_r^\ell X_t^{n,R,i}}^2.
\end{equation*}

The following conditions will be assumed in the next theorem.

Conditions MC1--MC3: 
Fix $T,R>0$.
\begin{enumerate}[label=\textup{MC\arabic*.},leftmargin=2.3cm]
 \item There exist constants $C_{T,R}$ and
 $\theta>0$ such that 
 \begin{equation}\label{eq:MC1a}
  \sup_n\E\sup_{0\le t\le T}\abs{X_t^{n,R}}^2\le C_{T,R},
 \end{equation}
 and 
 \begin{equation}\label{eq:MC1b}
  \sup_n\E\abs{X_t^{n,R}-X_s^{n,R}}^2
  \le C_{T,R}\abs{t-s}^{\theta}, \qquad 0\leq s<t\leq T.
 \end{equation}
 \item For each fixed $t\in[0,T]$, there is an $\alpha>0$ such that (i)
 \begin{equation}\label{eq:MC2a}
  \sup_n\E\int_0^T\abs{D_rX_t^{n,R}}^2\dd r<\infty
 \end{equation}
 and (ii)
 \begin{equation}\label{eq:MC2b}
  \sup_n\E\int_0^T\int_0^T
  \frac{\abs{D_rX_t^{n,R}-D_{r'}X_t^{n,R}}^2}
  {\abs{r-r'}^{1+\alpha}}\dd r\dd r'<\infty.
 \end{equation}
 \item If a subsequence $X^{n_m,R}$ converges to $X^R$ in
 $L^2([0,T]\times\Omega;\R^K)$, then for each  $i$,
 \begin{align}
  &\int_0^T\E\abs{b_i(X_s^{n_m,R})-b_i(X_s^R)}\dd s\longrightarrow0,
  \label{eq:MC3a}
   \end{align}
   and 
    \begin{align}
  &\int_0^T\E\abs{\sigma_i^{n_m}(X_s^{n_m,R})
  -\sigma_i(X_s^R)}^2\dd s\longrightarrow0.
  \label{eq:MC3b}
 \end{align}
\end{enumerate}

\begin{theorem}[Identification of the approximation limit]\label{thm:MC-implies}
	For every $T,R>0$, assume that the approximating solutions \eqref{eq:localized-approx} satisfy MC1--MC3. Then, the approximating sequence admits a convergent subsequence, and its limit is a continuous nonnegative strong solution of \eqref{eq0425c}, which is  adapted to the completed natural filtration of $W$.
\end{theorem}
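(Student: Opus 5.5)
The plan is to apply the Malliavin compactness criterion of Da Prato--Malliavin--Nualart (as used by Meyer-Brandis and Proske \cite{MeyerProske}) one time slice at a time. Then we upgrade to convergence in $L^2([0,T]\times\Omega)$ using MC1, and finally pass to the limit in the integral form of \eqref{eq:localized-approx} using MC3.

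\textbf{Step 1: compactness at each fixed time.} Fix $t\in[0,T]$. By \eqref{eq:MC1a}, the sequence $\{X^{n,R}_t\}_n$ is bounded in $L^2(\Omega)$. By \eqref{eq:MC2a}, it is bounded in $\mathbb D^{1,2}$. The fractional Sobolev bound \eqref{eq:MC2b} in the Malliavin parameter $r$ is exactly the second hypothesis of the criterion. The criterion then says that $\{X^{n,R}_t\}_n$ is relatively compact in $L^2(\Omega;\R^K)$.

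\textbf{Step 2: a single subsequence converging on all of $[0,T]$.} Enumerate the rationals $t_j\in[0,T]\cap\mathbb Q$ and extract, by a diagonal argument, one subsequence $n_m$ along which $X^{n_m,R}_{t_j}$ converges in $L^2(\Omega)$ for every $j$. The uniform H\"older bound \eqref{eq:MC1b} then gives, for any $t$ and any rational $t_j$ near it,
\begin{equation*}
\|X^{n_m,R}_t-X^{n_{m'},R}_t\|_{L^2}\le 2C_{T,R}^{1/2}|t-t_j|^{\theta/2}+\|X^{n_m,R}_{t_j}-X^{n_{m'},R}_{t_j}\|_{L^2}.
\end{equation*}
Hence the sequence is Cauchy in $L^2(\Omega)$ for every $t$. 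The limit $X^R_t$ inherits \eqref{eq:MC1b}, and dominated convergence gives convergence in $L^2([0,T]\times\Omega)$. Two properties pass to the limit:
\begin{itemize}
\item \emph{Adaptedness.} Each $X^{n,R}_t$ is measurable with respect to the completed $\sigma$-field $\mathcal F^W_t$, and $L^2$-limits preserve this, so $X^R_t$ is $\mathcal F^W_t$-measurable.
\item \emph{Nonnegativity.} This follows from \eqref{eq:strict-positive}, since a.s.\ limits of positive random variables are nonnegative.
\end{itemize}

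\textbf{Step 3: identifying the limit equation.} Pass to the limit in
\begin{equation*}
X^{n,R,i}_t=x^i_0+\int_0^t\rho_R b_i(X^{n,R}_s)\dd s+\int_0^t\rho_R\sigma^n_i(X^{n,R}_s)\dd W^i_s .
\end{equation*}
The drift is handled by \eqref{eq:MC3a}. The stochastic integral is handled by \eqref{eq:MC3b} together with the It\^o isometry. In both terms we also need continuity of the cutoff factor: since $\rho_R$ is $C^1$, it is Lipschitz, and the products with the bounded localized coefficients converge in $L^1$ and $L^2$ respectively. We therefore split each term as $\rho_R(X^n)(f^n(X^n)-f(X))+(\rho_R(X^n)-\rho_R(X))f(X)$. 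This shows that $X^R$ satisfies, for each $t$ almost surely, the localized equation with limiting coefficients $\rho_R b$ and $\rho_R\sigma$. The right-hand side has a continuous version, and we define $X^R$ to be that version.

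Since $\rho_R=1$ on $\{|x|\le R\}$, the process $X^R$ solves \eqref{eq0425c} up to $\tau_R=\inf\{t:|X^R_t|\ge R\}$.

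\textbf{Step 4: removing the localization (main obstacle).} To obtain a global solution we must make the family $X^R$ consistent, i.e.\ show $X^R=X^{R'}$ on $[0,\tau_R]$ for $R<R'$. The subsequence depends on $R$, so consistency is not automatic.

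My first attempt would be to run the dual Yamada--Watanabe argument of \cite{Che} for the stopped or localized equation. Step 3 gives strong existence for the localized equation. Its coefficients coincide with those of \eqref{eq0425c} up to the exit time, and Dawson--Perkins \cite{DP2} gives weak uniqueness for the stopped martingale problem. Together these yield pathwise uniqueness up to $\tau_R$, and hence consistency.

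With consistency in hand, set $X_t=X^R_t$ on $\{t\le\tau_R\}$. Non-explosion, $\tau_R\to\infty$, follows from the linear growth condition in Assumption~\ref{ass:regularity}, the bound $\gamma_i\le\overline\gamma$, and a Gronwall/BDG estimate uniform in $R$; this same estimate also yields \eqref{eq:global-second-moment}. The result is a continuous, nonnegative, $\mathcal F^W$-adapted solution of \eqref{eq0425c}.
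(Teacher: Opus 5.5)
Your Steps 1--3 agree in substance with the paper's. Your pointwise Cauchy argument plus dominated convergence replaces the paper's rational-partition estimate. Your Step 4, however, takes a genuinely different and heavier route.

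The paper never uses uniqueness for the limit equation. It extracts a subsequence for $R=1$, refines it for $R=2$, and so on, then diagonalizes. One sequence $(n_m)$ therefore serves every integer $R$, and the $R$-dependence you identify as the obstacle disappears. Consistency is then inherited from the approximations: for fixed $n$, the smooth Lipschitz equations for $X^{n,R}$ and $X^{n,S}$ coincide on $\{|x|<R\}$, so $X^{n,R}=X^{n,S}$ on $[0,\tau_R^n]$. This passes to the limit because the paper's Step 3 already proves, via BDG, that $\E\sup_{t\le T}|X_t^{n_m,R}-X_t^R|^2\to0$. That gives a.s. uniform convergence along a further subsequence. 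Your Step 3 only identifies the limit at each fixed $t$, although BDG would give this upgrade at once.

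Your alternative is strong existence for the stopped equation plus uniqueness in law, then dual Yamada--Watanabe. It can be made to work, but as written it is asserted rather than proved. \cite{DP2} does not cover the $\rho_R$-localized equation, since its effective $\rho_R^2\gamma_i$ and boundary drift vanish outside the ball. It also only gives well-posedness of the unstopped martingale problem on $S$. You would therefore need:
\begin{itemize}
\item a localization theorem giving uniqueness for the problem stopped at exit from $\{|x|<R\}$, using that stopped solutions stay in $S$;
\item a dual Yamada--Watanabe result valid for this path-dependent stopped equation.
\end{itemize}

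Your route gives pathwise uniqueness of the stopped equations as a by-product, with no need for a common subsequence or uniform-in-time convergence. The paper's route is elementary and self-contained, using only MC1--MC3 and the growth bounds. It keeps the construction of the strong solution independent of any uniqueness theory for the degenerate limit equation; weak uniqueness from \cite{DP2} enters only in Corollary~\ref{cor:pathwise-general}.
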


\begin{proof}
We divide the proof into several steps.

\medskip
\noindent\textbf{Step 1. Compactness at rational times.}
	For each fixed $t\in[0,T]$ and $R>0$, consider the random variables $X_t^{n,R}$.
By MC1,
\[
\sup_n
\mathbb E|X_t^{n,R}|^2
\leq
C_R .
\]
By MC2, for each fixed $t\in[0,T]$ and $R>0$, the family $\left\{
X_t^{n,R}:\ n\geq1
\right\}$
has uniformly bounded Malliavin derivatives and uniformly bounded
fractional regularity in the Malliavin parameter. Therefore, by the
Da Prato--Malliavin--Nualart compactness criterion (cf. \cite{DPMN}, see also Corollary 1 in Shaposhnikov \cite{Shaposhnikov}), this family is
relatively compact in $L^2(\Omega;\mathbb R^K)$.

Let $\mathbb{Q}_T=\mathbb{Q}\cap[0,T]$.  A diagonal extraction gives a subsequence, still denoted $n_m$, and random variables $X_t^R$, $t\in\mathbb{Q}_T$, such that
\begin{equation}\label{eq:rational-L2}
 X_t^{n_m,R}\longrightarrow X_t^R
 \quad\text{in }L^2(\Omega;\R^K),
 \qquad t\in\mathbb{Q}_T.
\end{equation}

\medskip
\noindent\textbf{Step 2. Extension to all times.}
Passing to the limit in MC1 along rational times yields
\[
 \E\abs{X_t^R-X_s^R}^2\le C_{T,R}\abs{t-s}^{\theta},
 \qquad s,t\in\mathbb{Q}_T.
\]
If $t_k\in\mathbb{Q}_T$ and $t_k\to t$, then $(X_{t_k}^R)$ is Cauchy in $L^2(\Omega)$, define $X_t^R$ to be its limit.  The definition is independent of the rational sequence and
\begin{equation}\label{eq:limit-time-regularity}
 \E\abs{X_t^R-X_s^R}^2\le C_{T,R}\abs{t-s}^{\theta},
 \qquad s,t\in[0,T].
\end{equation}
We claim that
\begin{equation}\label{eq:L2-time-space-conv}
 X^{n_m,R}\longrightarrow X^R
 \quad\text{in }L^2([0,T]\times\Omega;\R^K).
\end{equation}
Choose a rational partition $0=t_0<\cdots<t_N=T$ with mesh at most $\delta$, and let $\pi(s)=t_k$ on $[t_k,t_{k+1})$.  Then
\begin{align*}
 \int_0^T\E\abs{X_s^{n_m,R}-X_s^R}^2\dd s
 &\le3\int_0^T\E\abs{X_s^{n_m,R}-X_{\pi(s)}^{n_m,R}}^2\dd s\\
 &\quad+3\int_0^T\E\abs{X_{\pi(s)}^{n_m,R}-X_{\pi(s)}^R}^2\dd s\\
 &\quad+3\int_0^T\E\abs{X_{\pi(s)}^R-X_s^R}^2\dd s.
\end{align*}
The first and third terms are bounded by $C_{T,R}\delta^\theta$.  For a fixed partition, the middle term tends to zero by \eqref{eq:rational-L2}.  Letting first $m\to\infty$ and then $\delta\downarrow0$ proves \eqref{eq:L2-time-space-conv}.
The subspace of progressively measurable processes is closed in $L^2([0,T]\times\Omega)$, so $X^R$ is progressively measurable with respect to the natural filtration of $W$.  	Furthermore, since $X^{n_m,R}\geq0$, the $L^2$-limit $X^R$ is
non-negative a.e. on $[0,T]\times\Omega$, after modifying it on a null
set.

\medskip
\noindent\textbf{Step 3.  Passage to the localized limiting equation.}
The integral form of \eqref{eq:localized-approx} is
\begin{equation}\label{eq:approx-integral}
 X_t^{n_m,R,i}=x_0^i+
 \int_0^t\rhoR(X_s^{n_m,R})b_i(X_s^{n_m,R})\dd s
 +\int_0^t\rhoR(X_s^{n_m,R})\sigma_i^{n_m}(X_s^{n_m,R})\dd W_s^i.
\end{equation}
By \eqref{eq:L2-time-space-conv}, the Lipschitz continuity of $\rhoR$, and MC3, it is easy to obtain  
\begin{align}
 &\int_0^T\E\abs{\rhoR(X_s^{n_m,R})b_i(X_s^{n_m,R})
 -\rhoR(X_s^R)b_i(X_s^R)}\dd s\to0,
 \label{eq:localized-drift-conv}\\
 &\int_0^T\E\abs{\rhoR(X_s^{n_m,R})\sigma_i^{n_m}(X_s^{n_m,R})
 -\rhoR(X_s^R)\sigma_i(X_s^R)}^2\dd s\to0.
 \label{eq:localized-diff-conv}
\end{align}
The drift integrands are uniformly bounded on the localized state space.  Hence \eqref{eq:localized-drift-conv} also implies convergence in $L^2([0,T]\times\Omega)$.  Define the continuous adapted process:
\begin{equation}\label{eq:localized-limit-RHS}
 \widetilde X_t^{R,i}:=x_0^i+
 \int_0^t\rhoR(X_s^R)b_i(X_s^R)\dd s
 +\int_0^t\rhoR(X_s^R)\sigma_i(X_s^R)\dd W_s^i.
\end{equation}
Then, applying the Cauchy--Schwarz and the Burkholder--Davis--Gundy inequality  to \eqref{eq:approx-integral}, \eqref{eq:localized-drift-conv}, and \eqref{eq:localized-diff-conv}, we obtain 
\begin{equation}\label{eq:C-prob-conv}
 \E\sup_{0\le t\le T}\abs{X_t^{n_m,R}-\widetilde X_t^R}^2\longrightarrow0.
\end{equation}
On the other hand, \eqref{eq:L2-time-space-conv} says that the same sequence converges to $X^R$ in $L^2([0,T]\times\Omega)$.  Therefore $X^R=\widetilde X^R$ for $\dd t\otimes\dd\Pp$-almost every $(t,\omega)$.  Replacing $X^R$ by the continuous version $\widetilde X^R$, we obtain
\begin{equation}\label{eq:localized-limit-equation}
 X_t^{R,i}=x_0^i+
 \int_0^t\rhoR(X_s^R)b_i(X_s^R)\dd s
 +\int_0^t\rhoR(X_s^R)\sigma_i(X_s^R)\dd W_s^i.
\end{equation}
Thus $X^R$ is a nonnegative strong solution of the localized limiting equation.

\medskip
\noindent\textbf{Step 4. A diagonal subsequence independent of $R$.}
\medskip

We now let $R\uparrow\infty$. Although the approximating processes $X^{n,R}$ depend on $R$, the subsequences used in Steps 1--3 can be chosen with a common index set for all integer $R\geq1$. Indeed, choose a subsequence for $R=1$, then from it a further subsequence for $R=2$, and continue inductively. A diagonal argument yields a single sequence of indices $(n_m)$ such that, for every integer $R\geq1$,
$$
\mathbb{E}\sup_{0\leq t\leq T}
\left|X_t^{n_m,R}-X_t^R\right|^2
\longrightarrow0.
$$

We next verify that the limiting processes are consistent. Fix integers $1\le R<S$ . For every $n$, the equations for $X^{n,R}$ and $X^{n,S}$ have the same coefficients as long as the common path stays in $\{\abs{x}<R\}$. Hence, by pathwise uniqueness for the smooth approximating equations,
\begin{equation}\label{eq:approx-consistency}
	X_t^{n,R}=X_t^{n,S},
	\qquad 0\le t\le\tau_R^n,
	\qquad
	\tau_R^n:=\inf\{t\ge0:\abs{X_t^{n,R}}\ge R\}.
\end{equation}
By \eqref{eq:C-prob-conv}, after passing, if necessary, to a further subsequence, both $X^{n_m,R}\to X^R$ and  $X^{n_m,S}\to X^S$ uniformly on $[0,T]$ almost surely. 

Define
$$
\tau_R^R
:=
\inf\{t\ge0:\abs{X_t^R}\ge R\}.
$$
Fix $t<\tau_R^R\wedge T$. By continuity of $X^R$, we have 
$
\sup_{0\le s\le t}\abs{X_s^R}<R.
$
Therefore, by the uniform convergence, for all sufficiently large $m$,
$$
\sup_{0\le s\le t}\abs{X_s^{n_m,R}}<R.
$$
Thus $t<\tau_R^{n_m}$, and hence \eqref{eq:approx-consistency} gives
$$
X_s^{n_m,R}=X_s^{n_m,S},
\qquad
0\le s\le t.
$$
Letting $m\to\infty$, we obtain
$$
X_s^R=X_s^S,
\qquad
0\le s\le t.
$$
Since $t<\tau_R^R\wedge T$ was arbitrary, continuity yields
\begin{equation}\label{eq:limit-consistency}
	X_t^R=X_t^S,
	\qquad
	0\le t\le \tau_R^R\wedge T.
\end{equation}
\medskip
\noindent\textbf{Step 5. Removing the localization.}
The coefficients in \eqref{eq:localized-limit-equation} satisfy, uniformly in $R$,
\[
 \abs{\rhoR(x)b(x)}\le C(1+\abs{x}),
 \qquad
 \sum_i\abs{\rhoR(x)\sigma_i(x)}^2
 \le C\abs{x}^2,
 \qquad x\in\R_+^K.
\]
The standard Burkholder--Davis--Gundy and Gronwall argument yields
\begin{equation}\label{eq:R-uniform-moment}
 \E\sup_{0\le t\le T}\abs{X_t^R}^2
 \le C_T(1+\abs{x_0}^2),
\end{equation}
with $C_T$ independent of $R$.  Hence, as  $R \to \infty$, we have 
\begin{equation}\label{eq0720a}
 \Pp(\tau_R^R\le T)
 \le\frac{C_T(1+\abs{x_0}^2)}{R^2}\longrightarrow0.
\end{equation}
	Define
\[
X_t:=X_t^R
\quad
\text{whenever } t\leq\tau^R_R.
\]
The consistency above shows that this definition is independent of the
choice of $R$.

By \eqref{eq0720a},
the process $X$ is defined on every finite time interval with
probability one. It is continuous, non-negative, and adapted to the
natural filtration of $W$. Moreover, for every fixed $T>0$, choosing
$R$ sufficiently large and using the equation before $\tau^R_R$, we
obtain, for all $t\leq T$,
\begin{equation}\label{xu1}
X_t^i
=
x^i_0
+
\int_0^t b_i(X_s)\,\dd s
+
\int_0^t
\sigma_i(X_s) 
\,\dd W_s^i.
\end{equation}
Thus,  $X$ is a non-negative strong solution of
 \eqref{eq0425c}  driven by the prescribed Brownian motion
$W$.
For the moment estimate,  
applying the  BDG and Gronwall  directly to \ref{xu1} gives \eqref{eq:global-second-moment}.  This completes the proof.
\end{proof}

By Lemma \ref{lem:positivity-increments},  the family of solutions to the smooth localized equation \eqref{eq:localized-approx} satisfies condition MC1. The verification of conditions MC2 and MC3 will be carried out in the subsequent sections.

\section{The weighted quotient estimate}\label{sec4}
\setcounter{equation}{0}
\renewcommand{\theequation}{\thesection.\arabic{equation}}
\renewcommand{\theHequation}{\thesection.\arabic{equation}}

This section develops the weighted stability estimate needed for the verification of MC2(ii). We first states the main result, Proposition \ref{prop:weighted-stability}, which provides the required uniform weighted control of the Malliavin derivative and its increments. Subsections \ref{sub1}–\ref{sub3} are then devoted to the proof of this proposition.

Fix $\ell\in\{1,\ldots,K\}$ and $0\le r\le s$.  Write
\begin{equation}\label{eq:Y-def}
 Y_i^\ell(s;r):=D_r^\ell X_s^{n,R,i}.
\end{equation}
For a vector $u\in\R^K$, define
\begin{equation}\label{eq:R-cutoff-linear}
 R_s(u):=\ip{\nabla\rhoR(X_s^{n,R})}{u}.
\end{equation}

Taking the Malliavin derivative in \eqref{eq:localized-approx} at a deterministic time  yields, for $s\ge r$,
\begin{align}
 \dd Y_i^\ell(s;r)
 &=\left[
 \rho_s\sum_{j=1}^K\partial_jb_i(X_s^{n,R})Y_j^\ell(s;r)
 +b_i(s)R_s(Y^\ell(s;r))
 \right]\dd s \notag\\
 &\quad+\left[
 \rho_s\sum_{j=1}^K\partial_j\sigma_i^n(X_s^{n,R})Y_j^\ell(s;r)
 +\sigma_i^n(X_s^{n,R})R_s(Y^\ell(s;r))
 \right]\dd W_s^i,
 \label{eq:tangent}
\end{align}
with initial value
\begin{equation}\label{eq:tangent-seed}
 Y_i^\ell(r;r)=\delta_{i\ell}\rho_r\sigma_i^n(X_r^{n,R}).
\end{equation}

For $0<h<t$ and $0\le r\le t-h$, define, for $s\ge r+h$,
\begin{equation}\label{Z-def}
	Z_i^{\ell,r,h}(s):=Y_i^{\ell}(s;r+h)-Y_i^{\ell}(s;r).
\end{equation}
Then $Z^{\ell,r,h}$ solves the same homogeneous localized linearized equation \eqref{eq:tangent} on $[r+h,t]$ with the initial condition
\begin{equation*}\label{Z-initial}
	Z_i^{\ell,r,h}(r+h)
	=\delta_{i\ell}\rho_{r+h}\sig_i^n(X^{n,R}_{r+h})-Y_i^{\ell}(r+h;r).
\end{equation*}
We now write the equations for $Y_i^{\ell}(\cdot;r)$ and $Z_i^{\ell,r,h}$  into the following 
unified form: for 
 $s\in[a,t]$,  
\begin{align}
 \dd U_i(s)
 &=\left[
 \rho_s\sum_{j=1}^K\partial_jb_i(X_s^{n,R})U_j(s)
 +b_i(s)R_s(U(s))
 \right]\dd s\notag\\
 &\quad+\left[
 \rho_s\sum_{j=1}^K\partial_j\sigma_i^n(X_s^{n,R})U_j(s)
 +\sigma_i^n(X_s^{n,R})R_s(U(s))
 \right]\dd W_s^i,
 \quad U(a)=\xi.
 \label{eq:generic-tangent}
\end{align}
Define the boundary-sensitive energy
\begin{equation}\label{eq:Q-def}
 \cQ_s(u):=\sum_{i=1}^K\frac{u_i^2}{d_i(s)}.
\end{equation}
The central estimate is the following.

\begin{proposition}[Weighted stability of the homogeneous tangent equation]\label{prop:weighted-stability}
	Fix $0\le a\le t \le T$, and let $U=(U_s)_{s\in[a,t]}$ denote the solution to the  homogeneous linearized equation \eqref{eq:generic-tangent} with the initial condition $U_a=\xi$. Then
\begin{equation}\label{eq:weighted-stability}
 \E\cQ_t(U(t))\le C_{T,R}\E\cQ_a(\xi).
\end{equation}
Consequently,
\begin{equation}\label{eq:unweighted-stability}
 \E\abs{U(t)}^2\le C_{T,R}\E\cQ_a(\xi).
\end{equation}
\end{proposition}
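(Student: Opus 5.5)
The plan is to apply It\^o's formula to $\cQ_s(U(s))=\sum_i U_i(s)^2/d_i(s)$ and show that its drift is bounded by $C_{T,R}\,\cQ_s(U(s))$ up to a local martingale. Gronwall's lemma then gives \eqref{eq:weighted-stability}. For fixed $n$ we have $d_i\ge\eta>0$, the coefficients of \eqref{eq:generic-tangent} are bounded, and $X^{n,R}$ stays in a bounded set (Lemma \ref{lem:positivity-increments}). So $\cQ_s(U(s))$ has finite moments, and the stochastic integrals are true martingales after a localizing sequence of stopping times; Fatou's lemma removes the localization. The only thing that must be uniform is the constant, which has to be independent of $n$. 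The unweighted bound \eqref{eq:unweighted-stability} is immediate, since $d_i(t)\le R+1\vee|x_0|+1$ gives $|U(t)|^2\le C_R\,\cQ_t(U(t))$.

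\textbf{Step 1 (It\^o expansion).} For each $i$, expand
\[
\dd\frac{U_i^2}{d_i}=\frac{2U_i\,\dd U_i+\dd\langle U_i\rangle}{d_i}-\frac{U_i^2\,\dd x_i}{d_i^2}-\frac{2U_i\,\dd\langle U_i,x_i\rangle}{d_i^2}+\frac{U_i^2\,\dd\langle x_i\rangle}{d_i^3}.
\]
The diffusion coefficient of $U_i$ is
\[
\rho\chi_i g'(x_i)g(x_{i+1})U_i+\rho\chi_i g(x_i)g'(x_{i+1})U_{i+1}+\rho g(x_i)g(x_{i+1})\nabla\chi_i\cdot U+\sigma_i^nR_s(U).
\]
The diffusion coefficient of $x_i$ is $\rho\chi_i g(x_i)g(x_{i+1})$.

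\textbf{Step 2 (the diagonal cancellation).} Put $A=\rho\chi_i g(x_{i+1})$. Collecting the terms in $U_i^2$ from the diagonal diffusion piece gives the perfect square
\[
A^2U_i^2\Bigl(\frac{g'^2}{d_i}-\frac{2gg'}{d_i^2}+\frac{g^2}{d_i^3}\Bigr)=A^2U_i^2\frac{(d_ig'(x_i)-g(x_i))^2}{d_i^3}=A^2\alpha_\eta(x_i)\frac{U_i^2}{d_i^2}.
\]
By \eqref{eq:alpha-bound} and \eqref{eq:p-basic}, this is at most $\tfrac14\rho^2\gamma_i x_{i+1}U_i^2/d_i^2$. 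The drift term $-U_i^2\rho b_i/d_i^2$ is split using $b_i=x_i\beta_i+m_i$ and \eqref{eq:reserve-def}, so that $m_i=\tfrac14\gamma_ix_{i+1}+r_i$. The piece $x_i\beta_iU_i^2/d_i^2$ is bounded by $C_R U_i^2/d_i$. Using $\rho^2\le\rho$, the $\tfrac14\gamma_ix_{i+1}$ part exactly absorbs the singular diagonal term. What remains is a favorable term $-\rho r_iU_i^2/d_i^2$, the ``reserve'', which will be used to absorb cross terms.

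\textbf{Step 3 (cross terms).} The remaining contributions are handled one by one with Cauchy--Schwarz.
\begin{enumerate}[label=(\alph*)]
\item The off-diagonal diffusion square is $\gamma_i g(x_i)^2g'(x_{i+1})^2U_{i+1}^2/d_i$. By \eqref{eq:gprime-bound} and $p(x_i)\le x_i\le d_i$ it is at most $16\bar\gamma\,U_{i+1}^2/d_{i+1}$.
\item The terms involving $\nabla\chi_i$ and $R_s$ carry a factor $g(x_i)$ or $\sigma_i^n$. The cutoff terms are controlled by \eqref{eq:cutoff-gradient}. So all of these are $O(\cQ)$.
\item The mixed products between the diagonal square-bracket $A(d_ig'-g)U_i/d_i^{3/2}$ and the other diffusion pieces are split by Young's inequality. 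The $U_{i+1}$ pieces are controlled as in (a).
\item The drift cross terms are $2U_iU_j\partial_jb_i/d_i$ for $j\ne i$, with $\partial_jb_i=x_i\partial_j\beta_i+\partial_j r_i+\tfrac14\partial_j(\gamma_ix_{i+1})$. The $x_i\partial_j\beta_i$ part is harmless. For the $\partial_jr_i$ part, \eqref{eq:weighted-transverse} gives
\[
\frac{|U_iU_j\partial_jr_i|}{d_i}\le\delta\frac{r_iU_i^2}{d_i^2}+C_\delta\frac{|\partial_jr_i|^2}{r_i}U_j^2\le\delta\frac{r_iU_i^2}{d_i^2}+\frac{C}{x_j}U_j^2.
\]
The first term is absorbed by the reserve from Step 2.
\end{enumerate}

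\textbf{Main obstacle.} The hard step is the last one: the weight $1/x_j$ must be converted into $1/d_j$. For $x_j\ge\eta$ this is trivial, since $1/x_j\le 2/d_j$. For $x_j<\eta$, I would first try to use the boundedness of $\partial_jr_i$ together with \eqref{eq:weighted-transverse}, giving $|\partial_jr_i|^2\le C\min(r_i,\,r_i/x_j)$. I would also try to borrow part of the $\tfrac14\gamma_i x_{i+1}$ drift near the boundary, where $\alpha_\eta<\tfrac14$ strictly: by the polynomial computed in the proof of Lemma \ref{lem:regularization}, the slack is of order $\delta(1-4\delta)$ for small $a$.

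The term $\tfrac14\gamma_iU_iU_{i+1}/d_i$ coming from $\partial_{i+1}m_i$ raises the same issue. I would bound it by $\delta x_{i+1}U_i^2/d_i^2+C\,U_{i+1}^2/x_{i+1}$ and again reduce to the regime $x_{i+1}<\eta$.

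Once every cross term is dominated by $\delta\rho r_iU_i^2/d_i^2+C\,\cQ$ with $\delta<1$ and $C$ independent of $n$, summing over $i$ gives
\[
\dd\,\cQ_s(U)\le C_{T,R}\,\cQ_s(U)\,\dd s+\dd M_s .
\]
Taking expectations along the localizing sequence and applying Gronwall's lemma then yields \eqref{eq:weighted-stability}.
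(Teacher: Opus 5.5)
Your overall frame is the paper's: It\^o on $\cQ_s(U)$, a drift bound $C_R\cQ_s(U)$, localization, then Gronwall. Your Step~2 is correct. But Step~3 has a genuine gap at exactly the point where the proposition is nontrivial.

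\textbf{Where it fails.} In Step~2 you spend the whole budget $\tfrac14\rho\gamma_i x_{i+1}U_i^2/d_i^2$ on the diagonal square $\rho^2\gamma_ip(x_{i+1})\alpha_\eta(x_i)U_i^2/d_i^2$, and you discard the rest (via $\rho^2\le\rho$ and $\alpha_\eta<\tfrac14$). What survives is only $-\rho r_iU_i^2/d_i^2$. Moreover $r_i$ may vanish identically, for instance in the affine case with $q_{i,i+1}=\gamma_i/4$ and no other inflow into coordinate $i$.

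The $U_iU_{i+1}$ cross terms then cannot be absorbed one at a time. The drift gives $\frac{\rho\gamma_i}{2d_i}U_iU_{i+1}$ (from $\partial_{i+1}m_i$). The mixed diffusion product $\frac{2\rho^2}{d_i}S_iH_i$ gives $\frac{\rho^2\gamma_i}{d_i}p'(x_{i+1})\bigl(\frac{p'(x_i)}2-\frac{p(x_i)}{d_i}\bigr)U_iU_{i+1}$. Each has size about $\gamma_i|U_iU_{i+1}|/(2d_i)$. Any Young splitting of either one needs a positive fraction of $x_{i+1}U_i^2/d_i^2$, and that budget is gone. This covers your bound $\delta x_{i+1}U_i^2/d_i^2+CU_{i+1}^2/x_{i+1}$, and also your 3(c), which splits $S_iH_i$ against a fraction of $\rho^2|S_i|^2/d_i$.

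Concretely, take $\rho=1$, $r_i=0$ and $\eps\ll x_i\ll x_{i+1}$. The exact remaining slack $\Delta_i$ is only of order $\eps x_{i+1}/x_i$, so it absorbs about $\eps x_{i+1}U_i^2/x_i^3$. That is far smaller than your leftover $\delta x_{i+1}U_i^2/x_i^2$, and the leftover is not $O(\cQ)$ either. So the dangerous regime is $d_{i+1}\gg d_i$ with $x_i\gg\eps$, not $x_{i+1}<\eta$.

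The same objection applies to the drift pieces $\frac{\rho x_{i+1}}{2d_i}U_i\ip{\nabla\gamma_i}{U}$ and to the $\nabla\chi_i$ part of $S_iH_i$. These carry $|U_i|/d_i$ times $x_{i+1}$, not a harmless factor $g(x_i)$. So 3(b) is wrong as stated.

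\textbf{The missing idea.} Add the drift and diffusion cross terms before estimating. Their sum is
\[
\frac{\rho\gamma_i}{d_i}\Bigl[\frac12+\rho p'(x_{i+1})\Bigl(\frac{p'(x_i)}2-\frac{p(x_i)}{d_i}\Bigr)\Bigr]U_i\Bigl(U_{i+1}+\frac{x_{i+1}}{\gamma_i}\ip{\nabla\gamma_i}{U}\Bigr)
\]
plus a remainder carrying the factor $p(x_{i+1})-x_{i+1}p'(x_{i+1})=O(\eps)$. That remainder is $O(\cQ)$ because $d_i\ge\eps/8$.

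Formally, for the unregularized coefficients ($p(x)=x$, weight $1/x_i$, $\rho=1$) the bracket equals $\frac12+(\frac12-1)=0$. This exact cancellation is what the weight is designed to expose. After regularization, the case $d_{i+1}\le4d_i$ is trivial. For $d_{i+1}>4d_i$ you need the residual coefficient to be dominated by the residual slack:
\[
(\text{coef})^2d_{i+1}\le C(\Delta_i-\rho r_i)/d_i^2 .
\]
This is Lemma~\ref{lem:affine-far-absorption} with $\bar q=\rho\gamma_i/4$ and $\bar\gamma=\rho^2\gamma_i$.

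For this reason you must keep the full quantity $\Delta_i=\rho m_i-\rho^2\gamma_ip(x_{i+1})\alpha_\eta(x_i)$ rather than discarding its slack. The cutoff terms need it too. The terms $2m_iU_iR_s(U)/d_i$ and $2\rho\sigma_i^nR_s(U)S_i/d_i$ are not $O(\cQ)$. They are absorbed using $\Delta_i\ge\rho(1-\rho)m_i$, $m_i\ge\underline\gamma x_{i+1}/4$, and $|R_s(U)|^2\le C_R\rho(1-\rho)\cQ_s(U)$; the last is what the special form of \eqref{eq:cutoff-gradient} is for.

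\textbf{The obstacle you flagged is easy.} The $\partial_jr_i$ term needs no borrowed slack, which would not exist away from the boundary anyway. Boundedness of $\partial_jr_i$ on the localized region gives $|\partial_jr_i|^2\le C$, not $C\min(r_i,r_i/x_j)$. Combined with \eqref{eq:weighted-transverse}, this yields $d_j|\partial_jr_i|^2\le C_R(r_i+\eta)$. Run Young with $r_i+\eta$ in place of $r_i$; the extra term $\delta\eta U_i^2/d_i^2\le\delta U_i^2/d_i$ is harmless.
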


The proof of this proposition is deferred to the subsections below.  We first state two consequences obtained by applying the proposition with $U=Y^{\ell}$ and $U=Z^{\ell,r,h}$, respectively.

\begin{corollary}\label{coll1}
	There is a constant $C_{T,R}$ such that
	\begin{equation}\label{eq0719a}
		\E\abs{Y^{\ell}(t;r)}^2
		\le C_{T,R}
		\E\sum_{i=1}^K
		\frac{\abs{\delta_{i\ell}\rho_r\sqrt{\gamma_i(x)}\,g(x_i(r))g(x_{i+1}(r))}^2}{d_i(r)}\le C_{T,R},
	\end{equation}
	and
	\begin{equation}\label{eq:Z-propagation}
		\E\abs{Z^{\ell,r,h}(t)}^2
		\le C_{T,R}
		\E\sum_{i=1}^K
		\frac{\abs{\delta_{i\ell}\rho_{r+h}\sig_i^n(X^{n,R}_{r+h})-Y_i^{\ell}(r+h;r)}^2}{d_i(r+h)}.
	\end{equation}
\end{corollary}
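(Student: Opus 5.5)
Corollary~\ref{coll1} follows by applying Proposition~\ref{prop:weighted-stability}, in its unweighted form \eqref{eq:unweighted-stability}, to the two tangent processes. The only real work is to bound the weighted initial energy of $Y^\ell(\cdot;r)$ by a constant independent of $n$ and $r$.

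\textbf{Step 1: the bound on $Y^\ell$.} The process $Y^\ell(\cdot;r)$ solves \eqref{eq:generic-tangent} on $[r,t]$ with $a=r$ and initial value $\xi=Y^\ell(r;r)$ given by \eqref{eq:tangent-seed}. Inequality \eqref{eq:unweighted-stability} then gives $\E|Y^\ell(t;r)|^2\le C_{T,R}\E\cQ_r(\xi)$. Since $\sigma_i^n=\chi_i g(x_i)g(x_{i+1})$ and $\chi_i^2=\gamma_i$, this is exactly the middle expression in \eqref{eq0719a}. Only the $i=\ell$ term survives.

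To bound it uniformly, I would use $g(u)^2=p(u)\le u$ from \eqref{eq:p-basic}, $\rho_r\le1$ and $\gamma_\ell\le\overline\gamma$. This gives
\[
\frac{\rho_r^2\gamma_\ell\, g(x_\ell)^2g(x_{\ell+1})^2}{x_\ell+\eta}\le \overline\gamma\,\frac{x_\ell}{x_\ell+\eta}\,x_{\ell+1}\le \overline\gamma\,x_{\ell+1}(r).
\]
Here $x_\ell(r)\ge0$ by \eqref{eq:strict-positive}. By \eqref{eq:approx-sup-bound}, $\E x_{\ell+1}(r)\le (R+1\vee|x_0|)$, uniformly in $n$ and $r\le T$. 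This yields the constant $C_{T,R}$ in \eqref{eq0719a}. The key point is that the weight $1/d_\ell$ is exactly compensated by the factor $g(x_\ell)^2\le x_\ell$ coming from the degenerate diffusion.

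\textbf{Step 2: the bound on $Z^{\ell,r,h}$.} As noted before \eqref{eq:generic-tangent}, $Z^{\ell,r,h}=Y^\ell(\cdot;r+h)-Y^\ell(\cdot;r)$ solves the same homogeneous linear equation on $[r+h,t]$, by linearity and uniqueness of the tangent equation. Its initial datum at $a=r+h$ is $\delta_{i\ell}\rho_{r+h}\sigma_i^n(X^{n,R}_{r+h})-Y_i^\ell(r+h;r)$. Applying \eqref{eq:unweighted-stability} with $a=r+h$ gives \eqref{eq:Z-propagation} directly, once $\cQ_{r+h}$ is written out.

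\textbf{Main obstacle.} The proposition must apply when the initial value $\xi$ is random and $\mathcal F_a$-measurable, rather than deterministic. I would check that the proof of Proposition~\ref{prop:weighted-stability} is conditional on $\mathcal F_a$, or at least valid for $\mathcal F_a$-measurable $\xi$ with $\E\cQ_a(\xi)<\infty$. In Step 1 this integrability holds by the bound above. In Step 2 it holds because $d_i\ge\eta>0$, so $\cQ_{r+h}(\xi)\le \eta^{-1}|\xi|^2$ for each fixed $n$, and $\E|\xi|^2<\infty$ by Step 1. The right-hand side of \eqref{eq:Z-propagation} therefore makes sense. Its uniform smallness in $h$ is not claimed here and is left to later sections.
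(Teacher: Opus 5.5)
Your proposal is correct and matches the paper's argument: the corollary follows by applying Proposition~\ref{prop:weighted-stability}, in the form \eqref{eq:unweighted-stability}, to $U=Y^\ell(\cdot;r)$ with $a=r$ and to $U=Z^{\ell,r,h}$ with $a=r+h$. Your explicit bound $\rho_r^2\gamma_\ell\, p(x_\ell(r))\,p(x_{\ell+1}(r))/d_\ell(r)\le\overline\gamma\,x_{\ell+1}(r)\le C_R$ supplies the uniform constant that the paper leaves implicit, and your concern about random $\mathcal F_a$-measurable initial data is already handled, because the proposition is stated and proved with $\E\cQ_a(\xi)$ on the right-hand side.
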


Note that MC2(i) follows from \eqref{eq0719a}. To prove MC2(ii), we need to estimate the right-hand side of \eqref{eq:Z-propagation}.

\subsection{The rowwise structural estimate}\label{sub1}

Now, we fix $s$  and suppress the time argument, for example, write $x_i=x_i(s)$. For $u\in\R$ and $v\in\R^K$ with $v_i=u$, define the following structural quotient drift, 
\begin{align}
 \cS_i(u;v)
 &:=\chi_i g(x_{i+1})
 \left(g'(x_i)-\frac{g(x_i)}{d_i}\right)u
 +\chi_i g(x_i)g'(x_{i+1})v_{i+1}\notag\\
 &\quad+g(x_i)g(x_{i+1})
 \ip{\nabla\chi_i(x)}{v}.
 \label{eq:S-row}
\end{align}
The expression $\cS_i$ is exactly
$\ip{\nabla\sigma_i^n(x)}{v}
-\frac{\sigma_i^n(x)}{d_i}u$.
When $u=U_i(s)$ and $v=U(s)$, the part of the drift in
It\^o's formula for $U_i(s)^2/d_i$ that does not involve the
cutoff derivative $R_s(U(s))$ is
\begin{equation}\label{eq:G-row-def}
 \cG_i(u;v)
 :=\rho\left[
 -\frac{b_i(s)}{d_i^2}u^2
 +\frac{2u}{d_i}\sum_{j=1}^K\partial_jb_i(x)v_j
 \right]
 +\frac{\rho^2}{d_i}\abs{\cS_i(u;v)}^2.
\end{equation}

Now,  we define
\begin{equation}\label{eq:Delta-def}
 \Delta_i(s)
 :=
 \rho m_i(s)-\rho^2\gamma_i(x)p(x_{i+1})\alpha_\eta(x_i).
\end{equation}
By \eqref{eq:reserve-def}, \eqref{eq:p-basic}, and \eqref{eq:alpha-bound}, we have
\begin{equation}\label{xi1}
\begin{aligned}
 \Delta_i(s)
 &=\rho r_i(s)+\rho\gamma_i(x)
 \left[\frac{1}{4}x_{i+1}
 -\rho p(x_{i+1})\alpha_\eta(x_i)\right]\ge0,\\
 \Delta_i(s)&\ge\rho r_i(s),
 \qquad
 \Delta_i(s)\ge\rho(1-\rho)m_i(s).
\end{aligned}
\end{equation}

\begin{lemma}[Rowwise quotient estimate]\label{lem:rowwise}
There are constants $c_0>0$ and $C_R<\infty$, independent of $n$, such that
\begin{equation}\label{eq:rowwise-estimate}
 \cG_i(u;v)
 \le-c_0\frac{\Delta_i(s)}{d_i(s)^2}u^2
 +C_R\left(
 \frac{u^2}{d_i(s)}+
 \sum_{j\ne i}\frac{v_j^2}{d_j(s)}
 \right).
\end{equation}
\end{lemma}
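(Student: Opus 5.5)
The plan is to expand $\cG_i(u;v)$ directly and to split it into one singular diagonal part, which will produce $-\Delta_i u^2/d_i^2$, and remainders, which are harmless after using the elementary bounds of Lemma~\ref{lem:regularization}. The uncertain step is the cross term of the diagonal piece with the incoming-coordinate piece; as explained below, I cannot yet close it and expect it to be the main obstacle.

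\textbf{Step 1: split $\cS_i$ and isolate the diagonal square.} Write $\cS_i=A+B+C$, where
\[
A=\chi_i g(x_{i+1})\frac{d_ig'(x_i)-g(x_i)}{d_i}\,u,\qquad
B=\chi_i g(x_i)g'(x_{i+1})v_{i+1},\qquad
C=g(x_i)g(x_{i+1})\ip{\nabla\chi_i}{v}.
\]
By the definition of $\alpha_\eta$ in \eqref{eq:alpha-bound},
\[
\frac{\rho^2}{d_i}A^2=\rho^2\gamma_i\,p(x_{i+1})\,\alpha_\eta(x_i)\,\frac{u^2}{d_i^2}.
\]

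\textbf{Step 2: the drift terms.} Insert $b_i=x_i\beta_i+m_i$ from \eqref{eq:drift-decomp}. Since $x_i/d_i\le1$, the term $-\rho x_i\beta_iu^2/d_i^2$ is bounded by $C_Ru^2/d_i$. The term $-\rho m_iu^2/d_i^2$ combines with the diagonal square from Step~1 to give exactly $-\Delta_i u^2/d_i^2$, with $\Delta_i$ as in \eqref{eq:Delta-def}. The linear drift term splits into $2\rho\,\partial_ib_i\,u^2/d_i\le C_Ru^2/d_i$ and, for $j\ne i$,
\[
\frac{2\rho|u||\partial_jb_i||v_j|}{d_i}.
\]
These off-diagonal pieces are the first place where a weight mismatch appears, since the bound wanted in \eqref{eq:rowwise-estimate} carries $v_j^2/d_j$, not $v_j^2/d_i$. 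Here I plan to rewrite $\partial_jb_i=x_i\partial_j\beta_i+\partial_jm_i$ and control $\partial_jm_i=\partial_jr_i+\tfrac14\partial_j(\gamma_ix_{i+1})$ through \eqref{eq:weighted-transverse}. The $x_i$ factor gives $x_i/d_i\le1$. The $r_i$ part gives $\sqrt{C_Rr_i/x_j}$, and Young's inequality puts $\rho r_iu^2/d_i^2\le\Delta_iu^2/d_i^2$ on one side and $C v_j^2/x_j$ on the other. I expect $v_j^2/x_j$ to be upgradable to $v_j^2/d_j$ on the set where $x_j\ge\eta$, but I do not yet see how to handle $x_j<\eta$; the $\gamma_ix_{i+1}$ part needs a separate argument as well.

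\textbf{Step 3: the remaining pieces of $|\cS_i|^2$.} Several terms are straightforward:
\begin{itemize}[leftmargin=2em]
\item Since $g(x_i)\le\sqrt{d_i}$ and $|g'(x_{i+1})|^2\le16/d_{i+1}$ by \eqref{eq:gprime-bound}, one gets $B^2/d_i\le C_Rv_{i+1}^2/d_{i+1}$.
\item The bound $C^2/d_i\le C_R|v|^2$ gives only $|v|^2$, which is at most $C_R\sum_jv_j^2/d_j$ because $d_j\le C_R$ on the localized region.
\item The cross term $2AC/d_i$ is fine, since $|d_ig'-g|/\sqrt{d_i}\le\tfrac12$ and $g(x_i)\le\sqrt{d_i}$ give $|AC|/d_i\le C_R|u||v|/\sqrt{d_i}$.
\end{itemize}
The cross term $2AB/d_i$ is the real difficulty. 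Young's inequality gives $\kappa A^2/d_i+\kappa^{-1}B^2/d_i$. The second part is harmless. The first part, however, is a multiple of $p(x_{i+1})\alpha_\eta(x_i)u^2/d_i^2$ that is not absorbed: $\alpha_\eta\le\tfrac14$ is sharp only asymptotically (as $x_i/\eps\to\infty$), so the naive bound leaves no uniform margin between $\tfrac14x_{i+1}$ and $p(x_{i+1})\alpha_\eta(x_i)$.

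\textbf{Step 4: attempted treatment of $2AB/d_i$ (unresolved).} I would first try splitting into two regimes. In the regime $x_i\le M\eps$, the polynomial identity in the proof of Lemma~\ref{lem:regularization} gives $\alpha_\eta\le\tfrac14-c_M$. This leaves a strict margin $\Delta_i\ge\rho\gamma_i c_M\,p(x_{i+1})$, which absorbs $\kappa A^2/d_i$ for small $\kappa$. In the regime $x_i\ge M\eps$, I would compute $AB/d_i$ exactly rather than through Young. There one has $d_ig'-g\approx-\tfrac12\sqrt{x_i}$ and $g(x_i)\approx\sqrt{x_i}$, so
\[
\frac{AB}{d_i}\approx-\tfrac12\,\chi_i^2\,(gg')(x_{i+1})\,\frac{uv_{i+1}}{d_i},
\]
with $(gg')(x_{i+1})\le1$ by \eqref{eq:log-derivative}. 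This is of order $|u||v_{i+1}|/d_i$ with no small factor, so $d_i\ge M\eps$ alone does not bring it into the form $C_R(u^2/d_i+v_{i+1}^2/d_{i+1})$ uniformly in $n$. The argument therefore still needs an extra ingredient here, for example some cancellation of this term against another term in $\cG_i$ (I do not currently see one), and I have not found it. This is the main obstacle. Combining Steps~1 to~3 together with a successful Step~4 would give \eqref{eq:rowwise-estimate} with $c_0=1-\kappa$, but as it stands the plan does not establish the lemma.
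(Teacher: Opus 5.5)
Your plan does not prove the lemma, and the obstruction you met in Step~4 is real. The term $2AB/d_i$ cannot be bounded on its own. Neither can $2AC/d_i$, which Step~3 dismisses because of an arithmetic slip. Keeping track of the factors, $|A|\le C_R\,g(x_{i+1})|u|/\sqrt{d_i}$ and $|C|\le C_R\,g(x_{i+1})\sqrt{d_i}\,|v|$. Hence $|AC|/d_i\le C_R\,p(x_{i+1})|u||v|/d_i$, not $C_R|u||v|/\sqrt{d_i}$, and this is its true size when $x_i\gg\eps$. Such a term is not dominated by $u^2/d_i+v_j^2/d_j$ when $d_j\gg d_i$, and in that regime $\Delta_i$ is far too small to absorb it.

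The missing idea is that both cross terms cancel to leading order against the drift terms you set aside in Step~2 as ``the $\gamma_i x_{i+1}$ part''. This is how the paper's proof proceeds. With $m_i=r_i+\frac14\gamma_i x_{i+1}$, the linear drift contains
\[
\frac{2\rho u}{d_i}\sum_j\partial_j\Bigl(\frac14\gamma_i x_{i+1}\Bigr)v_j=\frac{\rho\gamma_i}{2d_i}\,u v_{i+1}+\frac{\rho x_{i+1}}{2d_i}\,u\ip{\nabla\gamma_i}{v}.
\]
Using $p'=2gg'$ and $2\chi_i\nabla\chi_i=\nabla\gamma_i$, the cross terms are exactly
\[
\frac{2\rho^2AB}{d_i}=\rho^2\gamma_i p'(x_{i+1})\Bigl(\frac{p'(x_i)}{2d_i}-\frac{p(x_i)}{d_i^2}\Bigr)u v_{i+1},\qquad
\frac{2\rho^2AC}{d_i}=\rho^2 p(x_{i+1})\Bigl(\frac{p'(x_i)}{2d_i}-\frac{p(x_i)}{d_i^2}\Bigr)u\ip{\nabla\gamma_i}{v}.
\]
The sum of these four terms equals $H\,u\,w$ plus an error. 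That error carries the factor $p(x_{i+1})-x_{i+1}p'(x_{i+1})=-\eps x_{i+1}^2/(x_{i+1}+\eps)^2$, which is harmless because $d_i\ge\eps/8$. Here
\[
H=\frac{\rho\gamma_i}{d_i}\Bigl[\frac12+\rho\, p'(x_{i+1})\Bigl(\frac{p'(x_i)}{2}-\frac{p(x_i)}{d_i}\Bigr)\Bigr],\qquad
w=v_{i+1}+\frac{x_{i+1}}{\gamma_i}\ip{\nabla\gamma_i}{v},
\]
and $w^2/d_{i+1}\le C_R\sum_j v_j^2/d_j$. For $\rho=1$ and $x_i,x_{i+1}\gg\eps$ the bracket vanishes to leading order. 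The term you computed, $2AB/d_i\approx-\frac12\gamma_i p'(x_{i+1})uv_{i+1}/d_i$, is cancelled by the $+\frac12\gamma_i uv_{i+1}/d_i$ coming from the incoming catalytic drift.

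After this grouping, split according to $d_{i+1}\le 4d_i$ or $d_{i+1}>4d_i$, rather than $x_i\lessgtr M\eps$.
\begin{itemize}
\item In the near region $|H|\le C_R/d_i$, so $|Huw|\le C_R(u^2/d_i+w^2/d_{i+1})$ directly.
\item In the far region you need the sharp algebraic inequality $H^2d_{i+1}\le C(\Delta_i-\rho r_i)/d_i^2$. This is Lemma~\ref{lem:affine-far-absorption} with $\bar q=\rho\gamma_i/4$ and $\bar\gamma=\rho^2\gamma_i$. Young's inequality then absorbs $|Huw|$ into $\frac34(\Delta_i-\rho r_i)u^2/d_i^2+Cw^2/d_{i+1}$. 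Since $\rho^2A^2/d_i-\frac14\rho\gamma_i x_{i+1}u^2/d_i^2=-(\Delta_i-\rho r_i)u^2/d_i^2$, a quarter of this reserve survives.
\end{itemize}

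The asymptotic sharpness of $\alpha_\eta\le\frac14$ that worried you is exactly what is exploited. For $\rho=1$ and $x_i\ge\eps$, the residual of $H$ in the far region is $O(\eps/x_i^2)$, while $\frac14-\alpha_\eta(x_i)\gtrsim\eps/x_i$, and this margin suffices. For $\rho<1$ the cancellation is incomplete, but this is paid for by the extra reserve $\Delta_i-\rho r_i\ge\frac14\rho(1-\rho)\gamma_i x_{i+1}$.

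Finally, the $x_j<\eta$ difficulty in Step~2 disappears if you run Young's inequality with $r_i+\eta$ in place of $r_i$. Indeed, \eqref{eq:weighted-transverse} and boundedness of $\partial_j r_i$ give $d_j|\partial_j r_i|^2\le C_R(r_i+\eta)$. The extra piece $\delta_j\rho\eta u^2/d_i^2$ is at most $\delta_j u^2/d_i$. Choosing $\sum_{j\ne i}\delta_j\le\frac14$ keeps $-\frac34\rho r_i u^2/d_i^2$.
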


\begin{proof}

Recall the definition of $b_i(s)$  from \eqref{eq:drift-decomp}.  We first consider the following term:
\begin{align*}
 &-\rho\frac{x_i\beta_i}{d_i^2}u^2
 +\frac{2\rho u}{d_i}\sum_j\partial_j(x_i\beta_i)v_j\\
 &=\rho\beta_i\frac{x_i+2\eta}{d_i^2}u^2
 +2\rho\frac{x_i\partial_i\beta_i}{d_i}u^2
 +2\rho\frac{x_i u}{d_i}\sum_{j\ne i}\partial_j\beta_i v_j.
\end{align*}
On the localized state space, $\beta_i$ and $  \partial_i\beta_i$ are bounded by a constant depending only on $R$.  Moreover, we have, 
\[
 |uv_j|\le C_R\left(\frac{u^2}{d_i}+\frac{v_j^2}{d_j}\right).
\]
Therefore, 
\begin{equation}\label{eq:self-drift-bound}
 \rho\left[-\frac{x_i\beta_i}{d_i^2}u^2
 +\frac{2u}{d_i}\sum_j\partial_j(x_i\beta_i)v_j\right]
 \le C_R\left(\frac{u^2}{d_i}
 +\sum_{j\ne i}\frac{v_j^2}{d_j}\right).
\end{equation}

We next consider the terms containing $ r_i$.  Note that, we have 
$
 2\rho\frac{|\partial_i r_i|}{d_i}u^2\le C_R\frac{u^2}{d_i},
$ 
and for $j\ne i$, inequality
\eqref{eq:weighted-transverse} implies
$
 d_j|\partial_jr_i|^2
 \le C_R(r_i+\eta).
$ 
Then, 
for arbitrary $\delta_j>0$, applying the Young's inequality gives
\begin{align}
 2\rho\frac{|\partial_jr_i||uv_j|}{d_i}
 &\le \delta_j\rho(r_i+\eta)\frac{u^2}{d_i^2}
 +\frac{\rho}{\delta_j}
 \frac{|\partial_jr_i|^2}{r_i+\eta}v_j^2\notag\\
 &\le \delta_j\rho r_i\frac{u^2}{d_i^2}
 +C_R\delta_j\frac{u^2}{d_i}
 +\frac{C_R}{\delta_j}\frac{v_j^2}{d_j}.
 \label{eq:m-transverse}
\end{align}
Now, we choose the fixed numbers $\delta_j$, such that
$\sum_{j\ne i}\delta_j\le1/4$. Then
\begin{equation}\label{eq:beta-transverse}
 \rho\left[-\frac{r_i}{d_i^2}u^2
 +\frac{2u}{d_i}\sum_j\partial_jr_i v_j\right]
 \le-\frac34\rho r_i\frac{u^2}{d_i^2}
 +C_R\left(\frac{u^2}{d_i}
 +\sum_{j\ne i}\frac{v_j^2}{d_j}\right).
\end{equation}

Since
\[
 \partial_j\left(\frac{1}{4}\gamma_i x_{i+1}\right)
 =\frac{1}{4}x_{i+1}\partial_j\gamma_i
 +\frac{1}{4}\gamma_i\delta_{j,i+1}.
\]
Then, combining the preceding  estimates yields
\begin{align}
 &\rho\left[
 -\frac{b_i}{d_i^2}u^2
 +\frac{2u}{d_i}\sum_j\partial_jb_i v_j
 \right]\notag\\
 &\quad\le
 -\frac34\rho r_i\frac{u^2}{d_i^2}
 -\frac{1}{4}\rho\gamma_i x_{i+1}\frac{u^2}{d_i^2}
 +\frac{1}{2}\frac{\rho\gamma_i}{d_i}uv_{i+1}
 +\frac{1}{2}\frac{\rho x_{i+1}}{d_i}
 u\ip{\nabla\gamma_i}{v}\notag\\
 &\qquad
 +C_R\left(\frac{u^2}{d_i}
 +\sum_{j\ne i}\frac{v_j^2}{d_j}\right).
 \label{eq:drift-row-final}
\end{align}

Next, we  decompose
\begin{equation}\label{eq:S-H-decomp}
 \cS_i(u;v)=S_i(u)+H_i(v),
\end{equation}
where
\begin{align*}
 S_i(u)&=\chi_i g(x_{i+1})
 \frac{d_i g'(x_i)-g(x_i)}{d_i}u,\\
 H_i(v)&=\chi_i g(x_i)g'(x_{i+1})v_{i+1}
 +g(x_i)g(x_{i+1})\ip{\nabla\chi_i}{v}.
\end{align*}
For the first term in $H_i$, by \eqref{eq:gprime-bound} and $p(x_i)\le d_i$,  we 
have
\[
 \frac{\gamma_i p(x_i)|g'(x_{i+1})|^2v_{i+1}^2}{d_i}
 \le C_R\frac{v_{i+1}^2}{d_{i+1}}.
\]
For the second term, the local boundedness of $\nabla\chi_i$ gives
\[
 \frac{p(x_i)p(x_{i+1})}{d_i}
 \left|\ip{\nabla\chi_i}{v}\right|^2
 \le C_R|v|^2
 \le C_R\sum_{j=1}^K\frac{v_j^2}{d_j}.
\]
Therefore, we have
\begin{equation}\label{eq:H-square}
 \frac{|H_i(v)|^2}{d_i}
 \le C_R\sum_{j=1}^K\frac{v_j^2}{d_j}.
\end{equation}

Now, we control the cross term $S_iH_i$. Using $p'=2gg'$ and
$2\chi_i\nabla\chi_i=\nabla\gamma_i$, a direct expansion gives
\begin{align*}
 \frac{2\rho^2}{d_i}S_i(u)H_i(v)
 &=\rho^2\gamma_i p'(x_{i+1})
 \left(\frac{p'(x_i)}{2d_i}-\frac{p(x_i)}{d_i^2}\right)uv_{i+1}\\
 &\quad+\rho^2p(x_{i+1})
 \left(\frac{p'(x_i)}{2d_i}-\frac{p(x_i)}{d_i^2}\right)
 u\ip{\nabla\gamma_i}{v}.
\end{align*}
Note that
\begin{align*}
&	-\frac{1}{4}\rho\gamma_i x_{i+1}\frac{u^2}{d_i^2}
	+\frac{1}{2}\frac{\rho\gamma_i}{d_i}uv_{i+1}
	+\frac{1}{2}\frac{\rho x_{i+1}}{d_i}
	u\ip{\nabla\gamma_i}{v}\\
&	+\frac{\rho^2}{d_i}|S_i(u)|^2+\frac{2\rho^2}{d_i}S_i(u)H_i(v)\notag\\
 =&-\frac{\Delta_i-\rho r_i}{d_i^2}u^2+\frac{\rho\gamma_i}{d_i}
 \left[\frac{1}{2}+\rho p'(x_{i+1})
 \left(\frac{p'(x_i)}2-\frac{p(x_i)}{d_i}\right)\right]u\left(v_{i+1}+ \frac{x_{i+1}}{\gamma_i} \ip{\nabla\gamma_i}{v}\right)\\
 &\quad+\frac{\rho^2}{d_i}
 \left(\frac{p'(x_i)}2-\frac{p(x_i)}{d_i}\right)
 \left[p(x_{i+1})-x_{i+1}p'(x_{i+1})\right]
 u\ip{\nabla\gamma_i}{v}.
\end{align*}

Since, 
\[
 p(x_{i+1})-x_{i+1}p'(x_{i+1})
 =-\frac{\eps x_{i+1}^2}{(x_{i+1}+\eps)^2},
\] 
 $d_i\ge\eta=\eps/8$,  and 
$|p'(x_i)/2-p(x_i)/d_i|\le3/2$. Then, we have
\[
\frac{\rho^2}{d_i}
\left(\frac{p'(x_i)}2-\frac{p(x_i)}{d_i}\right)
\left[p(x_{i+1})-x_{i+1}p'(x_{i+1})\right]  u\ip{\nabla\gamma_i}{v} \leq  C_R\left(\frac{u^2}{d_i}+\sum_j\frac{v_j^2}{d_j}\right).
\]
For the remaining terms,  if $d_{i+1}\le4d_i$, we obtain
\begin{equation}\label{x1}
\begin{aligned}
 &\left|\frac{\rho\gamma_i}{d_i}
 \left[\frac{1}{2}+\rho p'(x_{i+1})
 \left(\frac{p'(x_i)}2-\frac{p(x_i)}{d_i}\right)\right]
 u\left(v_{i+1}+\frac{x_{i+1}}{\gamma_i}
 \ip{\nabla\gamma_i}{v}\right)\right|\\
 &\qquad\le C_R\left(\frac{u^2}{d_i}+\sum_j\frac{v_j^2}{d_j}\right).
\end{aligned}
\end{equation}
If $d_{i+1}>4d_i$, applying  Lemma
\ref{lem:affine-far-absorption} with 
$\bar q=\rho\gamma_i/4$ and $\bar \gamma =\rho^2\gamma_i$, yields
\begin{equation}\label{x2}
\begin{aligned}
 &\left|\frac{\rho\gamma_i}{d_i}
 \left[\frac{1}{2}+\rho p'(x_{i+1})
 \left(\frac{p'(x_i)}2-\frac{p(x_i)}{d_i}\right)\right]\right|^2d_{i+1}
 \le C_R\frac{\Delta_i-\rho r_i}{d_i^2}.
\end{aligned}
\end{equation}
Moreover,  
\begin{equation}\label{x3}
 \frac1{d_{i+1}}
 \left|v_{i+1}+\frac{x_{i+1}}{\gamma_i}
 \ip{\nabla\gamma_i}{v}\right|^2
 \le C_R\sum_j\frac{v_j^2}{d_j}.
\end{equation}
Combine  \eqref{x1}-\eqref{x3},  we have 
\begin{align*}
 &-\frac{\Delta_i-\rho r_i}{d_i^2}u^2
 +\frac{\rho\gamma_i}{d_i}
 \left[\frac{1}{2}+\rho p'(x_{i+1})
 \left(\frac{p'(x_i)}2-\frac{p(x_i)}{d_i}\right)\right]
 u\left(v_{i+1}+\frac{x_{i+1}}{\gamma_i}
 \ip{\nabla\gamma_i}{v}\right)\\
 &\qquad\le
 -\frac14\frac{\Delta_i-\rho r_i}{d_i^2}u^2
 +C_R\left(\frac{u^2}{d_i}+\sum_j\frac{v_j^2}{d_j}\right).
\end{align*}
Therefore, we obtain
\begin{align}
 &-\frac{1}{4}\rho\gamma_i x_{i+1}\frac{u^2}{d_i^2}
 +\frac{1}{2}\frac{\rho\gamma_i}{d_i}uv_{i+1}
 +\frac{1}{2}\frac{\rho x_{i+1}}{d_i}
 u\ip{\nabla\gamma_i}{v}
 +\frac{\rho^2}{d_i}|\cS_i(u;v)|^2\notag\\
 &\qquad\le
 -\frac14\frac{\Delta_i-\rho r_i}{d_i^2}u^2
 +C_R\left(\frac{u^2}{d_i}+\sum_j\frac{v_j^2}{d_j}\right).
 \label{eq:diff-row-final}
\end{align}
Finally, combining \eqref{eq:drift-row-final} and
\eqref{eq:diff-row-final} yields
\[
 \cG_i(u;v)
 \le-\frac34\rho r_i\frac{u^2}{d_i^2}
 -\frac14(\Delta_i-\rho r_i)\frac{u^2}{d_i^2}
 +C_R\left(\frac{u^2}{d_i}
 +\sum_{j\ne i}\frac{v_j^2}{d_j}\right).
\]
This proves \eqref{eq:rowwise-estimate} with
$c_0=1/4$.

\end{proof}

\subsection{The spatial-cutoff perturbation}\label{sub2}

For vectors $u,v,y\in\R^K$ with $v_i=u_i$, define
\begin{align}
 \cC_i(u;v,y)
 &:=2b_i(s)\frac{u_iR_s(y)}{d_i}
 +2\rho_s\sigma_i^n(x)
 \frac{R_s(y)}{d_i}\cS_i(u_i;v)\notag\\
 &\quad+\frac{\abs{\sigma_i^n(x)}^2}{d_i}R_s(y)^2.
 \label{eq:C-cutoff-def}
\end{align}
The terms collected in $ \cC_i(u;v,y)$ arise from differentiating the spatial cutoff $\rho_s$. More precisely, they are the additional terms in the drift coefficient of It\^o's formula for $(U^i_s)^2/d_i$. 

\begin{lemma}[Cutoff perturbation bound]\label{lem:cutoff}
For every $\vartheta>0$ there is $C_{\vartheta,R}<\infty$, independent of $n$, such that
\begin{align}
 \sum_{i=1}^K\cC_i(u;v^{(i)},y)
 &\le \vartheta\sum_{i=1}^K
 \frac{\Delta_i(s)}{d_i(s)^2}u_i^2\notag\\
 &\quad+C_{\vartheta,R}\left[
 \cQ_s(u)+\cQ_s(y)+
 \sum_{i=1}^K\sum_{j\ne i}\frac{\abs{v_j^{(i)}}^2}{d_j(s)}
 \right],
 \label{eq:cutoff-bound}
\end{align}
whenever $v_i^{(i)}=u_i$ for every $i$.
\end{lemma}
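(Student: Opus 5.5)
The plan is to bound the three groups of terms in $\cC_i$ separately. Two ingredients do most of the work. The first is the cutoff gradient bound \eqref{eq:cutoff-gradient}, which gives
\[
R_s(y)^2\le\abs{\nabla\rhoR(x)}^2\abs{y}^2\le C_R\,\rho(1-\rho)\abs{y}^2\le C_R\,\rho(1-\rho)\,\cQ_s(y).
\]
The last step uses $d_j\le C_R$ on the localized state space. The second is the lower bound $\Delta_i\ge\rho(1-\rho)m_i$ from \eqref{xi1}, which lets the factor $\rho(1-\rho)$ coming from $R_s(y)^2$ be absorbed into the good term $\Delta_i u_i^2/d_i^2$. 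Throughout we use that $x$ ranges in a compact set, so $\beta_i,m_i,\gamma_i,\nabla\gamma_i$ are bounded by $C_R$, and $u_i^2\le C_R u_i^2/d_i$.

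\textbf{The first term} $2b_iu_iR_s(y)/d_i$. Split it via $b_i=x_i\beta_i+m_i$ from \eqref{eq:drift-decomp}. The $x_i\beta_i$ part is harmless because $x_i/d_i\le1$, so it is at most $C_R(u_i^2/d_i+\cQ_s(y))$. For the $m_i$ part, apply Young's inequality with the weight $\rho(1-\rho)$:
\[
\frac{2m_i\abs{u_i}\abs{R_s(y)}}{d_i}\le \vartheta\,\rho(1-\rho)m_i\frac{u_i^2}{d_i^2}+\frac{m_iR_s(y)^2}{\vartheta\rho(1-\rho)}.
\]
The first piece is at most $\vartheta\Delta_iu_i^2/d_i^2$, and the second is at most $C_{\vartheta,R}\cQ_s(y)$ by the first display. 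Where $\rho(1-\rho)=0$ we have $R_s(y)=0$, so there is nothing to prove there.

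\textbf{The third term.} Since $\abs{\sigma_i^n}^2=\gamma_ip(x_i)p(x_{i+1})$ and $p(x_i)\le d_i$ by \eqref{eq:p-basic}, this term is at most $C_RR_s(y)^2\le C_R\cQ_s(y)$.

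\textbf{The middle term.} Use the decomposition $\cS_i=S_i(u_i)+H_i(v)$ from \eqref{eq:S-H-decomp}. The $H_i$ part is handled by Young's inequality,
\[
2\rho\abs{\sigma_i^n}\abs{R_s(y)}\abs{H_i}/d_i\le\abs{\sigma_i^n}^2R_s(y)^2/d_i+\rho^2\abs{H_i}^2/d_i,
\]
followed by the previous paragraph and \eqref{eq:H-square}. The diagonal $v_i=u_i$ contribution there goes into $\cQ_s(u)$, and the rest into $\sum_{j\ne i}\abs{v_j^{(i)}}^2/d_j$.

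The main obstacle is the $S_i$ part. It equals
\[
2\rho\gamma_i\,p(x_{i+1})\,\frac{g(x_i)\bigl(d_ig'(x_i)-g(x_i)\bigr)}{d_i^2}\,u_iR_s(y),
\]
which carries a genuine $1/d_i$ singularity. First I would show $\abs{g(x_i)(d_ig'(x_i)-g(x_i))}\le d_i/2$. This follows from \eqref{eq:alpha-bound}, which gives $(d_ig'-g)^2\le d_i/4$, together with $g^2=p\le x_i\le d_i$. With this, the $S_i$ part is bounded by $C_R\rho\,p(x_{i+1})\abs{u_i}\abs{R_s(y)}/d_i$. The key point is that the prefactor $p(x_{i+1})\le x_{i+1}$ matches the reserve. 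Indeed, $m_i\ge\frac14\gamma_ix_{i+1}\ge\frac14\underline\gamma x_{i+1}$ by \eqref{eq:reserve-def}, so
\[
\Delta_i\ge\rho(1-\rho)m_i\ge c\,\rho(1-\rho)x_{i+1}.
\]
Applying Young's inequality as in the $m_i$ step gives
\[
C_R\rho\,x_{i+1}\frac{\abs{u_i}\abs{R_s(y)}}{d_i}\le\vartheta\rho(1-\rho)x_{i+1}\frac{u_i^2}{d_i^2}+C_{\vartheta,R}\frac{x_{i+1}R_s(y)^2}{\rho(1-\rho)}\le\vartheta' \frac{\Delta_iu_i^2}{d_i^2}+C_{\vartheta,R}\cQ_s(y).
\]

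Finally, sum over $i$ and rescale $\vartheta$ to obtain \eqref{eq:cutoff-bound}. All constants depend only on $R$, $\vartheta$, $\underline\gamma$ and $\overline\gamma$, so they are independent of $n$.
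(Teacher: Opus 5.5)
Your proposal is correct and follows the paper's proof essentially step for step: the bound $R_s(y)^2\le C_R\rho(1-\rho)\cQ_s(y)$, the split $b_i=x_i\beta_i+m_i$ with Young's inequality absorbed via $\Delta_i\ge\rho(1-\rho)m_i$, the direct bound on $|\sigma_i^n|^2R_s(y)^2/d_i$, and the $S_i+H_i$ decomposition with $m_i\ge\underline\gamma x_{i+1}/4$ used to absorb the $S_i$ part. Weighting Young's inequality by $\rho(1-\rho)$ rather than dividing by $\Delta_i$, and explicitly sending the $v_i^{(i)}=u_i$ entry of $H_i$ into $\cQ_s(u)$, are only cosmetic refinements of the same argument.
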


\begin{proof}
By \eqref{eq:cutoff-gradient}, we have 
\begin{equation}\label{eq:R-bound}
 |R_s(y)|^2
 \le C_R\rho_s(1-\rho_s)|y|^2
 \le C_R\rho_s(1-\rho_s)\cQ_s(y).
\end{equation}

First,
\[
 2x_i\beta_i\frac{u_iR_s(y)}{d_i}
 \le C_R|u_iR_s(y)|
 \le C_R\bigl(\cQ_s(u)+\cQ_s(y)\bigr).
\]
Using \eqref{eq:R-bound} and
$\Delta_i\ge\rho_s(1-\rho_s)m_i$, together with Young's inequality, we obtain
\begin{equation}\label{eq:cutoff-drift-young}
	\begin{aligned}
	2m_i\frac{|u_iR_s(y)|}{d_i}
	\le&\vartheta\Delta_i\frac{u_i^2}{d_i^2}
	+C_\vartheta\frac{m_i^2|R_s(y)|^2}{\Delta_i}\\
	\le &  \vartheta\Delta_i\frac{u_i^2}{d_i^2}+C_{\vartheta,R}\cQ_s(y). 
	\end{aligned}
\end{equation}

The final term in \eqref{eq:C-cutoff-def} satisfies
\begin{align*}
 \frac{|\sigma_i^n|^2}{d_i}R_s(y)^2
 =\gamma_i\frac{p(x_i)}{d_i}p(x_{i+1})R_s(y)^2
\le C_R\cQ_s(y).
\end{align*}

It remains to estimate the middle term. Use the decomposition
\eqref{eq:S-H-decomp}. By \eqref{eq:H-square},
\begin{align*}
 2\rho_s|\sigma_i^nR_s(y)|
 \frac{|H_i(v^{(i)})|}{d_i}
 &\le \rho_s^2\frac{|H_i(v^{(i)})|^2}{d_i}
 +\frac{|\sigma_i^n|^2R_s(y)^2}{d_i}\\
 &\le C_R\left(
 \sum_j\frac{|v_j^{(i)}|^2}{d_j}+\cQ_s(y)
 \right).
\end{align*}
Note that
\begin{align*}
 \frac{|\sigma_i^nS_i(u_i)|}{d_i}
 &=\gamma_i p(x_{i+1})
 \frac{|g(x_i)[d_i g'(x_i)-g(x_i)]|}{d_i^2}|u_i|
 \le C_Rx_{i+1}\frac{|u_i|}{d_i}.
\end{align*}
Combine  \eqref{eq:R-bound}, \eqref{xi1}, $m_i\ge\underline\gamma x_{i+1}/4$   and the Young's inequality yields
\begin{align*}
 2\rho_s|R_s(y)|
 \frac{|\sigma_i^nS_i(u_i)|}{d_i}
 &\le \vartheta\Delta_i\frac{u_i^2}{d_i^2}
 +C_{\vartheta,R}
 \frac{\rho_s^2|R_s(y)|^2x_{i+1}^2}{\Delta_i}\\
 &\le \vartheta\Delta_i\frac{u_i^2}{d_i^2}
 +C_{\vartheta,R}\cQ_s(y).
\end{align*}
Combine above estimates, and summing over $i$ proves
\eqref{eq:cutoff-bound}.

\end{proof}

\subsection{Proof of weighted stability}\label{sub3}

\begin{proof}[Proof of Proposition  \ref{prop:weighted-stability}]
Applying the It\^o's formula to $U_i(s)^2/d_i(s)$.  Note that 
\[
 \dd d_i(s)=\rho_sb_i(s)\dd s+\rho_s\sigma_i^n(X_s^{n,R})\dd W_s^i,
\]
and $U$ satisfies \eqref{eq:generic-tangent}. Then,  a direct calculation gives
\begin{equation}\label{eq:Q-Ito}
 \dd\cQ_s(U(s))
 =\left[
 \sum_{i=1}^K\cG_i(U_i(s);U(s))
 +\sum_{i=1}^K\cC_i(U(s);U(s),U(s))
 \right]\dd s+\dd M_s,
\end{equation}
where $M$ is a continuous local martingale.  Using  Lemma \ref{lem:rowwise} and choosing $\theta<c_0/2$ as in Lemma \ref{lem:cutoff}, we obtain   
\begin{equation}\label{eq:Q-differential-ineq}
 \dd\cQ_s(U(s))\le C_R\cQ_s(U(s))\dd s+\dd M_s.
\end{equation}

To justify taking expectations, for $m,N>0$, define
\[
 \beta_m=\inf\{s\ge a:\langle M\rangle_s-\langle M\rangle_a\ge m\}\wedge t,
 \qquad
 \tau_N=\inf\{s\ge a:\cQ_s(U(s))\ge N\}\wedge t.
\]
Integrate \eqref{eq:Q-differential-ineq} up to $s\wedge\tau_N\wedge\beta_m$ and take expectations.  The stopped martingale has zero expectation, so
\[
 \E\cQ_{s\wedge\tau_N\wedge\beta_m}(U)
 \le\E\cQ_a(\xi)
 +C_R\int_a^s\E\left[
 \1_{\{u\le\tau_N\wedge\beta_m\}}\cQ_u(U)
 \right]\dd u.
\]
Let $m\to\infty$, using Fatou on the left and monotone convergence on the right, and applying the  Gronwall's inequality, yields
\[
 \E\cQ_{s\wedge\tau_N}(U)
 \le \e^{C_R(s-a)}\E\cQ_a(\xi).
\]
Since $\cQ_\cdot(U)$ is continuous and finite on $[a,t]$, one has $\tau_N\uparrow t$ almost surely. Then,   Fatou's lemma proves \eqref{eq:weighted-stability}.  Note that, 
\[
 \abs{U(t)}^2=\sum_i d_i(t)\frac{U_i(t)^2}{d_i(t)}
 \le C_R\cQ_t(U(t)).
\]
This proves \eqref{eq:unweighted-stability}.
\end{proof}

\section{Some probabilistic estimates}\label{sec5}
\setcounter{equation}{0}
\renewcommand{\theequation}{\thesection.\arabic{equation}}
\renewcommand{\theHequation}{\thesection.\arabic{equation}}

This section develops the probabilistic estimates that translate the boundary behavior of the localized approximations into the short-time control needed for the Malliavin analysis. More precisely,  these estimates will be used to control  the cumulative singular activity over short intervals. 
\subsection{Boundary occupation estimate}\label{subsec:occupation}
The following lemma quantifies the catalyst-weighted time spent near a coordinate face. 
\begin{lemma}[Catalyst-weighted boundary occupation]\label{lem:occupation}
For each $i=1,\ldots,K$, and  $a>0$, there exists a constant $C_{T,R}>0$, independent of both $n$ and $a$, such that
\begin{equation}\label{eq:occupation}
 \E\int_0^T\rho_s p(x_{i+1}(s))
 \1_{\{x_i(s)\le a\}}\dd s
 \le C_{T,R}\sqrt a.
\end{equation}
\end{lemma}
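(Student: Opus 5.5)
We will prove this with an occupation-density argument: apply It\^o--Tanaka to a concave function of $x_i$ whose second derivative is of order $a^{-1/2}$ on $[0,a]$. The quadratic variation of $x_i$ is
\[
\dd\langle x_i\rangle_s=\rho_s^2\gamma_i(X_s)\,p(x_i(s))\,p(x_{i+1}(s))\,\dd s .
\]
This contains the factor $p(x_{i+1})$ we want, but also the degenerate factor $p(x_i)\approx x_i^2/(x_i+\eps)$. To compensate, we will use a test function whose second derivative blows up like $p(x_i)^{-1}$ near zero. The inward drift $m_i\ge\frac14\gamma_i x_{i+1}\ge \frac14\underline\gamma\, p(x_{i+1})$ from \eqref{eq:reserve-def} is what controls the boundary, so we will exploit it directly.

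The simplest choice is the first-order function $f_a(u)=\sqrt{u\wedge a}$, or a $C^2$ smoothing of it, for example $f_a(u)=\int_0^u \min\{(2\sqrt v)^{-1},(2\sqrt a)^{-1}\}\dd v$. Then $f_a$ is nondecreasing, concave, bounded by $C\sqrt a + u/\sqrt a$ on the localized state space, and satisfies $f_a'(u)\ge (2\sqrt a)^{-1}$ on $[0,a]$. Positivity from Lemma~\ref{lem:positivity-increments} guarantees that $x_i>0$, so It\^o's formula applies; if needed we will start from $u+\eta$ to avoid the singularity at zero. It gives
\[
f_a(x_i(T))-f_a(x_0^i)=\int_0^T f_a'(x_i)\rho_s b_i\,\dd s+\tfrac12\int_0^T f_a''(x_i)\,\dd\langle x_i\rangle_s+M_T .
\]
We split the drift as $b_i=x_i\beta_i+m_i$. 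On $\{x_i\le a\}$ the $m_i$-part contributes at least
\[
\frac{1}{2\sqrt a}\cdot\frac{\underline\gamma}{4}\,\rho_s\,p(x_{i+1})\,\1_{\{x_i\le a\}},
\]
and $m_i\ge0$ everywhere, so the remaining contribution is nonnegative. The $x_i\beta_i$ part is bounded by $C_R\,x_if_a'(x_i)\le C_R\sqrt{x_i}\le C_R$, uniformly in $a$ and $n$.

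The It\^o correction needs a uniform bound, and this is the main obstacle. Since $f_a''=-\frac14u^{-3/2}$ on $(0,a)$, the correction is bounded in absolute value by $C_R\,p(x_i)\,x_i^{-3/2}\le C_R\,x_i^{1/2}$, using $p(u)\le u^2/u=u$, and $p(u)\le u$ holds for all $u$. This is bounded uniformly, so the correction term is $O_R(T)$; the kink at $u=a$ only adds a nonpositive local-time term (concavity), which has the right sign. After taking expectations and killing $M$ by localization, we get
\[
\frac{\underline\gamma}{8\sqrt a}\,\E\int_0^T\rho_s p(x_{i+1})\1_{\{x_i\le a\}}\dd s\le \E f_a(x_i(T))+C_{T,R}.
\]

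At this stage we only have a bound of order $\sqrt a\,(C\sqrt a+C_{T,R})$, which is $C\sqrt a$ only when $a$ is small. To handle large $a$ we will instead use the trivial bound: the integrand is at most $p(x_{i+1})\le R+1$, so the left side of \eqref{eq:occupation} is at most $C_{T,R}\le C_{T,R}\sqrt a$ for $a\ge1$. For $a\le1$, the bound $\E f_a(x_i(T))\le C(\sqrt a+R/\sqrt a)$ is too weak, so we refine the test function to the bounded version $f_a(u)=\sqrt{u\wedge a}$ (capped at $\sqrt a$). Its drift contribution vanishes for $u>a$, and $f_a\le\sqrt a\le1$. This yields
\[
\E\int_0^T\rho_s p(x_{i+1})\1_{\{x_i\le a\}}\dd s\le C\sqrt a\,(1+C_{T,R}) .
\]

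The delicate points are the following. First, the cap at $a$ must be taken in a concave way; $\sqrt{u\wedge a}$ is concave, so the local-time term is nonpositive, as needed. Second, the constant bounding $|f_a''|\,\dd\langle x_i\rangle$ must not depend on $n$ or $a$. Third, on the event $\rho_s=0$ nothing is contributed. If the $x_i^{1/2}$ bound for the correction term gave too weak a constant, the fallback would be $f_a(u)=\sqrt{(u+\eta)\wedge a}$, using $p(x_i)/(x_i+\eta)^{3/2}\le\sqrt{x_i+\eta}$.
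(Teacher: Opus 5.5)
\textbf{There is a genuine gap in how you handle the It\^o correction.} Switching to $f_a(u)=\sqrt{u\wedge a}$ does not repair it.

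First, $p(u)\le u$ only gives $p(x_i)x_i^{-3/2}\le x_i^{-1/2}$, not $x_i^{1/2}$. Your fallback has the same slip: $p(x_i)(x_i+\eta)^{-3/2}\le (x_i+\eta)^{-1/2}\le\eta^{-1/2}$. The bound is sharp once $x_i\ge\epsilon_n$, where $p(x_i)\ge x_i/2$. So on $\{\epsilon_n\le x_i\le a\}$ the correction is of order $\rho_s^2p(x_{i+1})x_i^{-1/2}$. In the critical case $m_i=\frac14\gamma_ix_{i+1}$ it is pointwise as large as the drift term $f_a'(x_i)\rho_sm_i$ you are trying to estimate. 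In that case $x_i$ behaves, above level $\epsilon_n$, like a squared Bessel process of dimension one, whose occupation density at level $y$ is of order $y^{-1/2}$. Hence the time integral of the correction grows like $\log(a/\epsilon_n)$ and is not bounded uniformly in $n$.

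Second, the kink at $u=a$ has the wrong sign. Concavity makes the local-time term nonpositive in the expansion of $f_a(x_i(T))$. But you are bounding the drift integral from above, so after rearranging it enters the right-hand side as $+\frac{1}{4\sqrt a}L^a_T$, where $L^a_T$ is the semimartingale local time of $x_i$ at $a$. Absorbing it would require $\E L^a_T\le C\sqrt a$, which is essentially the statement being proved. The absolutely continuous part of $f_a''$ has the same sign; it only looked harmless because you bounded it in absolute value.

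\textbf{The missing idea: cancel the correction against the catalytic drift.} It cannot be estimated separately. Since $m_i\ge\frac14\gamma_ix_{i+1}\ge\frac14\gamma_ip(x_{i+1})$ and $\rho_sp(x_i)\le x_i$, take any $f$ with $f'\ge0\ge f''$. Then the $m_i$-part of the drift of $f(x_i)$ plus the It\^o correction is at least $\frac14\rho_s\gamma_ip(x_{i+1})\bigl[f'(x_i)+2x_if''(x_i)\bigr]$. So you need $f'+2uf''\ge0$ everywhere, with a margin of order $a^{-1/2}$ on $[0,a]$. For $f=\sqrt u$ this bracket vanishes identically ($\sqrt u$ is harmonic in the critical case). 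That is exactly why $\sqrt{u\wedge a}$ leaves nothing positive on $(0,a)$.

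The paper takes $\varphi_a(u)=\sqrt{u+a}$. For this choice the bracket equals $a/\bigl(2(u+a)^{3/2}\bigr)\ge 2^{-5/2}a^{-1/2}$ on $[0,a]$. Combined with $\rho_s\abs{x_i\beta_i}/(2\sqrt{x_i+a})\le C_R$ and $\E\bigl[\sqrt{x_i(T)+a}-\sqrt{x_0^i+a}\bigr]\le C_R$, this gives the lemma for all $a>0$ at once, with no case split.

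Your integral formula would also have worked. It is not a smoothing of $\sqrt{u\wedge a}$: it is the $C^1$ function that is linear with slope $1/(2\sqrt a)$ on $[0,a]$ and equals $\sqrt u-\sqrt a/2$ above. So $f_a''=0$ on $(0,a)$, the bracket is $1/(2\sqrt a)$ on $[0,a]$ and $0$ on $(a,\infty)$, and there is no local-time term. With that function, the step that fails in your proposal is bounding the correction on $\{x_i>a\}$ by a constant, which only gives $C_Ra^{-1/2}$. You need the cancellation there instead.
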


\begin{proof}
Let $\varphi_a(x)=\sqrt{x+a}$.  Applying  It\^o's formula gives
\begin{align}
 \dd\varphi_a(x_i(s))
 &=\left[
 \frac{\rho_sb_i(s)}{2\sqrt{x_i(s)+a}}
 -\frac{\rho_s^2\gamma_i(s)p(x_i(s))p(x_{i+1}(s))}
 {8(x_i(s)+a)^{3/2}}
 \right]\dd s+\dd M_s,
 \label{eq:phi-a-Ito}
\end{align}
where
\[
 M_t=\int_0^t
 \frac{\rho_s\sigma_i^n(X_s^{n,R})}{2\sqrt{x_i(s)+a}}\dd W_s^i.
\]
It is clear that $M$ is a square-integrable martingale.

Note that $\frac{\rho_sx_i \beta_i}{2\sqrt{x_i(s)+a}}$ is bounded below by $-C_R$. Moreover,  by \eqref{eq:reserve-def}, $m_i\ge\gamma_i x_{i+1}/4\ge\gamma_i p(x_{i+1})/4$. Then, we have
\begin{align*}
 &\frac{\rho_sm_i}{2\sqrt{x_i+a}}
 -\frac{\rho_s^2\gamma_i p(x_i)p(x_{i+1})}
 {8(x_i+a)^{3/2}}\\
 &\quad\ge
 \frac{\rho_s\gamma_i p(x_{i+1})}{8(x_i+a)^{3/2}}
 \bigl[(x_i+a)-\rho_sp(x_i)\bigr]\\
 &\quad\ge
 \frac{\underline\gamma}{8}\rho_sp(x_{i+1})
 \frac{a}{(x_i+a)^{3/2}}.
\end{align*}
Consequently,
\[
 \dd\varphi_a(x_i(s))
 \ge\left[-C_R+\frac{\underline\gamma}{8}\rho_sp(x_{i+1}(s))
 \frac{a}{(x_i(s)+a)^{3/2}}\right]\dd s+\dd M_s.
\]
Integrating over $[0,T]$ and taking expectations gives
\begin{align*}
 \frac{\underline\gamma}{8}\E\int_0^T\rho_sp(x_{i+1}(s))
 \frac{a}{(x_i(s)+a)^{3/2}}\dd s
 &\le \E\bigl[\varphi_a(x_i(T))-\varphi_a(x_i(0))\bigr]+C_RT.
\end{align*}
Note that
\[
\begin{aligned}
	\E\phi_a(x_i(T))-\phi_a(x_i(0))
	&\le
	\E\bigl[\sqrt{x_i(T)+a}-\sqrt a\bigr]
	\le C_{T,R}.
\end{aligned}
\]
Consequently,
\[
\E\int_0^T
\rho_s p(x_{i+1}(s))
\frac{a}{(x_i(s)+a)^{3/2}}\,\dd s
\le C_{T,R}.
\]

Finally, on $\{x_i(s)\le a\}$,
$
x_i(s)+a\le2a,
$
and hence
\[
\frac{a}{(x_i(s)+a)^{3/2}}
\ge
\frac{1}{2^{3/2}\sqrt a}
\mathbf 1_{\{x_i(s)\le a\}}.
\]
Therefore
\[
\E\int_0^T
\rho_s p(x_{i+1}(s))
\mathbf 1_{\{x_i(s)\le a\}}\,\dd s
\le C_{T,R}\sqrt a.
\]

\end{proof}

We further have the following lemma. 
\begin{lemma}\label{cor:dyadic}
For every $0<h\le1$,
\begin{equation}\label{eq:dyadic}
 \E\int_0^T\rho_s^2p(x_{i+1}(s))
 \left(1\wedge\frac{h}{x_i(s)}\right)\dd s
 \le C_{T,R}h^{1/2}.
\end{equation}
\end{lemma}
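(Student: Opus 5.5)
We derive the bound from Lemma~\ref{lem:occupation} through a dyadic decomposition of the range of $x_i(s)$. Since $\rho_s^2\le\rho_s$, it suffices to bound
\[
\E\int_0^T\rho_s p(x_{i+1}(s))\left(1\wedge\frac{h}{x_i(s)}\right)\dd s .
\]
We split the integrand according to whether $x_i(s)\le h$ or $x_i(s)\in(2^{k-1}h,2^kh]$ for some $k\ge1$.

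On $\{x_i(s)\le h\}$ the weight is $1$. Lemma~\ref{lem:occupation} with $a=h$ then bounds this piece by $C_{T,R}h^{1/2}$.

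On $\{2^{k-1}h<x_i(s)\le 2^kh\}$ the weight is at most $2^{1-k}$. We apply Lemma~\ref{lem:occupation} with $a=2^kh$, which is legitimate because its constant is independent of $a$. The $k$-th piece is then at most
\[
2^{1-k}C_{T,R}(2^kh)^{1/2}=2C_{T,R}h^{1/2}2^{-k/2}.
\]
Summing the geometric series over $k\ge1$ gives $C_{T,R}h^{1/2}$.

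The region $\{x_i(s)>2^kh\}$ for large $k$ is exhausted by this countable union. No truncation is needed, because the localized process is bounded, so only finitely many $k$ with $2^kh\le R+1\vee|x_0|$ contribute. Nevertheless the infinite sum already converges. Adding the two pieces proves \eqref{eq:dyadic}.

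The only point to check is that the constant in Lemma~\ref{lem:occupation} is uniform in $a$, including large $a$. The proof there bounds $\E[\sqrt{x_i(T)+a}-\sqrt a]$ by $C_{T,R}$ and uses a lower bound $-C_R$ on the $\beta_i$ drift term that does not depend on $a$. Both hold uniformly because $x_i$ is bounded on the localized state space, so the lemma applies as stated and I expect no real obstacle. The assumption $h\le1$ is not even essential, beyond matching the statement.
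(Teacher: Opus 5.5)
Your proposal is correct and is essentially the paper's proof. Both arguments use $\rho_s^2\le\rho_s$, treat $\{x_i\le h\}$ by Lemma~\ref{lem:occupation} with $a=h$, bound each dyadic shell by that lemma with $a$ equal to the shell's upper endpoint, and sum a geometric series in $2^{-k/2}$; your shells $(2^{k-1}h,2^kh]$, $k\ge1$, are just a reindexing of the paper's $(2^kh,2^{k+1}h]$, $k\ge0$. The argument hinges on the constant in Lemma~\ref{lem:occupation} being uniform in $a$, including large $a$, and you checked this correctly for the reasons you give.
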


\begin{proof}
Use $\rho_s^2\le\rho_s$.  On the set $\{x_i\le h\}$, we apply Lemma  \ref{lem:occupation} with $a=h$.  On the shell
$2^kh<x_i\le2^{k+1}h$, one has
$1\wedge h/x_i\le2^{-k}$.  Therefore
\begin{align*}
 &\E\int_0^T\rho_s^2p(x_{i+1})
 \left(1\wedge\frac{h}{x_i}\right)\dd s\\
 &\quad\le C_{T,R}\sqrt h+
 \sum_{k=0}^\infty2^{-k}
 \E\int_0^T\rho_sp(x_{i+1})
 \1_{\{x_i\le2^{k+1}h\}}\dd s\\
 &\quad\le C_{T,R}\sqrt h
 \left(1+\sum_{k=0}^\infty2^{-k/2}\right) \le  C_{T,R}h^{1/2},
\end{align*}
which proves the lemma.
\end{proof}

\subsection{A square-root downcrossing estimate}\label{subsec:downcrossing}

For $r\ge0$, define
\begin{equation}\label{eq:downcross-time}
 \tau_{i,r}^-:=\inf\left\{s\ge r:x_i(s)\le\frac12x_i(r)\right\}.
\end{equation}

\begin{lemma}[Short-time downcrossing]\label{lem:downcross}
For every $0<h\le1$,
\begin{equation}\label{eq:downcross}
 \Pp\left(\tau_{i,r}^-\le r+h\mid\F_r\right)
 \le\left(1\wedge C_R \frac{ h}{x_i(r)}\right).
\end{equation}
\end{lemma}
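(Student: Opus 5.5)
Conditioning on $\F_r$ and using the strong Markov property, set $x:=x_i(r)>0$; by \eqref{eq:strict-positive} this is a.s. positive. The bound $\le 1$ is trivial, so it suffices to show $\Pp(\tau^-\le r+h\mid\F_r)\le C_R h/x$. The idea is a Chebyshev-type estimate for the displacement $x_i(s)-x$ up to the stopping time $\tau:=\tau_{i,r}^-\wedge(r+h)$. On $\{\tau_{i,r}^-\le r+h\}$ we have $x_i(\tau)=x/2$, so $x-x_i(\tau)=x/2$, and it is enough to control the downward drift and the martingale part of $x_i$ on $[r,\tau]$.

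\textbf{Step 1 (decomposition).} Write
\[
x_i(\tau)-x=\int_r^\tau\rho_sb_i(s)\dd s+N_\tau,\qquad N_\tau=\int_r^\tau\rho_s\sigma_i^n(X_s^{n,R})\dd W_s^i .
\]
By \eqref{eq:drift-decomp} and $m_i\ge0$, the drift satisfies $\rho_sb_i\ge \rho_sx_i\beta_i\ge -C_R x_i(s)$. Before $\tau$ we have $x_i(s)\le R+1$ on the localized space, so the drift contribution is bounded below by $-C_Rh$. It does not help to sharpen this using $x_i\le$ something in terms of $x$, because $x_i$ can go up.

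\textbf{Step 2 (martingale part).} On $\{\tau^-\le r+h\}$ we get $N_\tau\le -x/2+C_Rh$. If $C_Rh\ge x/4$, the claim holds trivially after enlarging the constant. Otherwise $N_\tau\le -x/4$. Using Chebyshev with the bracket $\langle N\rangle_\tau=\int_r^\tau\rho_s^2\gamma_ip(x_i)p(x_{i+1})\dd s\le C_R\int_r^\tau p(x_i(s))\dd s$ would only give $C_R h/x^2\cdot\sup x_i$, which is too weak. The key device is therefore to work with a function that converts the diffusion's $p(x_i)\le x_i$ factor into a useful normalization. I would first apply the bound $\E\langle N\rangle_\tau\le C_R\,\E\int_r^\tau x_i(s)\dd s$, where $\E[\sup_{[r,r+h]}x_i]\le C_R(x+h)$, which yields $\Pp\le C_R h(x+h)/x^2\le C_R h/x$ when $h\le x$. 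When $h>x$, the right-hand side already exceeds $1$ up to constants. The expectation bound on the supremum follows from BDG and $p(x_i)\le x_i$: indeed $\E\sup x_i\le x+C_Rh+C\E(\int x_i)^{1/2}$, closed with Young's inequality.

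\textbf{Alternative and main obstacle.} The delicate point is making the $h/x$ (rather than $h/x^2$) scaling rigorous. The cleanest route is probably to apply It\^o's formula to $\psi(x_i)=\sqrt{x_i}$, or to $\log$, stopped at $\tau$. Take $f(y)=\sqrt y$: by the computation in \eqref{eq:phi-a-Ito} with $a=0$, its drift is bounded below by $-C_R\sqrt{x_i}$, because the $m_i$ term dominates the It\^o correction. This is exactly the reserve condition \eqref{eq:reserve-def} together with $p(x_i)\le x_i$. The event $\{\tau^-\le r+h\}$ forces a drop of $\sqrt x(1-2^{-1/2})$. The martingale $\int \rho\sigma_i^n/(2\sqrt{x_i})\dd W$ has bracket rate at most $C_R p(x_{i+1})/4\le C_R$, so its second moment up to $\tau$ is at most $C_Rh$. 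Chebyshev then gives $\Pp\le C_R h/x$ directly (the drift term $C_Rh\sqrt{R+1}$ is absorbed as before when $h\le c x$). I expect this $\sqrt{\cdot}$ route to be the one to carry out, with the main care spent on the lower bound for the drift of $\sqrt{x_i}$ near the boundary.
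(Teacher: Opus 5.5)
Your proposal is correct. The route you say you would carry out is essentially the paper's: It\^o's formula for $\sqrt{x_i}$, the reserve condition \eqref{eq:reserve-def} together with $p(x_i)\le x_i$ to make the $m_i$-part minus the It\^o correction nonnegative, a martingale whose bracket rate is at most $C_R$, and a conditional Chebyshev/Doob bound.

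The one technical difference is how you treat the term $\frac12\rho_s\beta_i\sqrt{x_i}$. The paper removes it with the integrating factor $E_{r,s}=\exp\{-\frac12\int_r^s\rho_u\beta_i\,\dd u\}$. Absorption then works for all $h\le h_0(R)$, independently of $x_i(r)$. You instead bound its integral additively by $C_Rh$, so you need $h\le c\,x_i(r)$; since $x_i(r)\le B_R$, this gives $C_Rh\ll\sqrt{x_i(r)}$, and the complementary case is trivial. Both work.

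Your Step~2 is also already a complete proof, and you did not need to abandon it. The drift is bounded above and $N$ is a true martingale, so $\E[x_i(s)\mid\F_r]\le x_i(r)+C_R(s-r)$; no BDG bound on the supremum is needed. Hence $\E[\langle N\rangle_{r+h}-\langle N\rangle_r\mid\F_r]\le C_Rh\,(x_i(r)+h)$, and Chebyshev yields $C_Rh/x_i(r)$ when $h\le c\,x_i(r)$. That version uses only bounded drift and $p(x_i)\le x_i$, not the reserve condition, which shows the lemma is a generic Feller-type fact. The square-root version instead gives a deterministic bound on the bracket and avoids any moment estimate of $x_i$ after time $r$.
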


\begin{proof}
Applying  It\^o's formula to $R_i(s)=\sqrt{x_i(s)}$.  We obtain
\begin{equation}\label{eq:sqrt-X-equation}
 \dd R_i(s)=\frac12\rho_s\beta_i(s)R_i(s)\dd s
 +\dd K_i(s)+v_i(s)\dd W_s^i,
\end{equation}
where
\begin{align}
 \dd K_i(s)
 &=\frac{\rho_s}{2R_i(s)}
 \left[m_i(s)-\frac{\rho_s\gamma_i(s)}{4}
 \frac{p(x_i(s))}{x_i(s)}p(x_{i+1}(s))\right]\dd s,
 \label{eq:K-increasing}\\
 v_i(s)&=\frac{\rho_s\chi_i(s)}{2}
 \sqrt{\frac{p(x_i(s))}{x_i(s)}}g(x_{i+1}(s)).
 \label{eq:v-downcross}
\end{align}
The bracket in \eqref{eq:K-increasing} is nonnegative. Thus,  $K_i$ is nondecreasing. 

Define the integrating factor
\[
 E_{r,s}:=\exp\left
 \{-\frac12\int_r^s\rho_u\beta_i(X_u^{n,R})\dd u\right\}.
\]
Then
\begin{equation}\label{eq:downcross-integrating-factor}
 E_{r,s}R_i(s)=R_i(r)+\int_r^sE_{r,u}\dd K_i(u)+M_{r,s},
 \quad
 M_{r,s}=\int_r^sE_{r,u}v_i(u)\dd W_u^i.
\end{equation}
Choose $h_0(R)\in(0,1]$ sufficiently small that
$\e^{C_Rh_0(R)}/\sqrt2<1$, let 
$c_R=1-\e^{C_Rh_0(R)}/\sqrt2>0$.
For $0<h\le h_0(R)$,   on the event $\{\tau_{i,r}^-\le r+h\}$,  we have 
\[
 E_{r,\tau_{i,r}^-}R_i(\tau)
 \le\frac{\e^{C_Rh}}{\sqrt2}R_i(r)
 \le(1-c_R)R_i(r).
\]
Since the finite-variation integral in \eqref{eq:downcross-integrating-factor} is nonnegative, we obtain 
$M_{r,\tau_{i,r}^-}\le-c_RR_i(r)$.  Hence
\[
 \{\tau_{i,r}^-\le r+h\}
 \subseteq\left\{\sup_{r\le s\le r+h}\abs{M_{r,s}}
 \ge c_RR_i(r)\right\}.
\]
Since $R_i(r)=\sqrt{x_i(r)}$ is
$\mathcal F_r$-measurable, conditional Markov inequality,
conditional Doob inequality, and conditional It\^o isometry yield
\begin{align*}
 \Pp(\tau_{i,r}^-\le r+h\mid\F_r)
 &\le\frac{4}{c_R^2x_i(r)}
 \E\left[\int_r^{r+h}E_{r,s}^2\abs{v_i(s)}^2\dd s
 \Bigm|\F_r\right]
 \le C_R\frac{h}{x_i(r)}.
\end{align*}
This proves \eqref{eq:downcross} for $0<h\le h_0(R)$.  If $h_0(R)<h\le1$, the localized state space gives a deterministic upper bound $x_i(r)\le B_R$.  Thus,
$1\wedge h/x_i(r)\ge1\wedge h_0(R)/B_R>0$, and the trivial probability bound is again controlled by the right side of \eqref{eq:downcross} after increasing $C_R$.
\end{proof}

\begin{lemma}[Saturated singular activity]\label{lem:saturated}
Let $\zeta_i(s)$ be a non-negative progressively measurable process satisfying
\begin{equation}\label{eq:zeta-upper}
 0\le\zeta_i(s)\le\frac{C_R}{x_i(s)}.
\end{equation}
Then, for $0<h\le1$,
\begin{equation}\label{eq:saturated-conditional}
 \E\left[
 \left(1-\exp\left\{-\int_r^{r+h}\zeta_i(s)\dd s\right\}\right)^2
 \Bigm|\F_r\right]
 \le C_R\left(1\wedge\frac{h}{x_i(r)}\right).
\end{equation}
Consequently, for $0<h<t\le T$,
\begin{align}
 &\E\int_0^{t-h}\rho_r^2p(x_{i+1}(r))
 \left(1-\exp\left\{-\int_r^{r+h}\zeta_i(s)\dd s\right\}\right)^2\dd r
 \le C_{T,R}h^{1/2}.
 \label{eq:saturated-integrated}
\end{align}
\end{lemma}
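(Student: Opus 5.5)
The plan is to split according to whether the $i$th coordinate halves during the window $[r,r+h]$, using the downcrossing time $\tau:=\tau_{i,r}^-$ from \eqref{eq:downcross-time}. Write $A=\int_r^{r+h}\zeta_i(s)\dd s$ and note that $0\le 1-\e^{-A}\le 1\wedge A$, so $(1-\e^{-A})^2\le 1\wedge A$.

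\emph{On the event $\{\tau>r+h\}$.} Here $x_i(s)>\frac12 x_i(r)$ for all $s\in[r,r+h]$. By \eqref{eq:zeta-upper} this gives $A\le 2C_Rh/x_i(r)$. Hence
\[
(1-\e^{-A})^2\1_{\{\tau>r+h\}}\le 1\wedge \frac{2C_Rh}{x_i(r)}\le C_R\left(1\wedge\frac{h}{x_i(r)}\right).
\]

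\emph{On the complementary event.} We bound the integrand trivially by $1$ and apply Lemma \ref{lem:downcross}:
\[
\Pp(\tau\le r+h\mid\F_r)\le 1\wedge C_Rh/x_i(r).
\]
Adding the two pieces, both bounded in conditional expectation, yields \eqref{eq:saturated-conditional}. We use that $x_i(r)$ is $\F_r$-measurable, and the strict positivity \eqref{eq:strict-positive} so that $h/x_i(r)$ is well defined.

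\emph{Integrated bound.} The quantity $\rho_r^2p(x_{i+1}(r))$ is nonnegative and $\F_r$-measurable. We condition on $\F_r$ inside the $\dd r$-integral, justified by Fubini since the integrand is bounded by a constant $C_R$ on the localized space. This gives
\[
\E\int_0^{t-h}\rho_r^2p(x_{i+1}(r))\,\E[(1-\e^{-A})^2\mid\F_r]\dd r\le C_R\E\int_0^{T}\rho_r^2p(x_{i+1}(r))\left(1\wedge\frac{h}{x_i(r)}\right)\dd r.
\]
By Lemma \ref{cor:dyadic} the right-hand side is at most $C_{T,R}h^{1/2}$, which is \eqref{eq:saturated-integrated}.

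\emph{Main obstacle.} The only delicate point is the first case: the bound on $\zeta_i$ must be used on the whole window, which is exactly why we localize to the event that no halving has occurred. Everything else is a direct application of Lemma \ref{lem:downcross} and Lemma \ref{cor:dyadic}. A secondary technical point is the measurability needed for the conditional Fubini step. This holds because $\zeta_i$ is progressively measurable, so $A$ is $\F_{r+h}$-measurable and jointly measurable in $(r,\omega)$.
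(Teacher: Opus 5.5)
Your proposal is correct and follows the paper's proof essentially step for step. You split on the downcrossing event $\{\tau_{i,r}^-\le r+h\}$, bound $\int_r^{r+h}\zeta_i\,\mathrm{d}s$ by $C_Rh/x_i(r)$ off that event, apply Lemma \ref{lem:downcross} on it, and then integrate using the tower property and Lemma \ref{cor:dyadic}; the only cosmetic difference is that you use $(1-\e^{-A})^2\le 1\wedge A$ directly, whereas the paper goes through $1\wedge A^2\le 1\wedge A$.
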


\begin{proof}
Let $A_{r,h}=\{\tau_{i,r}^-\le r+h\}$ and
$Z_{r,h}=\int_r^{r+h}\zeta_i(s)\dd s$.  On $A_{r,h}^c$, $x_i(s)>x_i(r)/2$, and therefore
$Z_{r,h}\le C_Rh/x_i(r)$.  Since $(1-\e^{-z})^2\le1\wedge z^2$ for $z\ge0$, it follows that 
\begin{equation}\label{92}
 (1-\e^{-Z_{r,h}})^2
 \le\1_{A_{r,h}}+C_R\left(1\wedge\frac{h^2}{x_i(r)^2}\right).
\end{equation}
Take conditional expectation to \eqref{92}, use
$1\wedge z^2\le1\wedge z$ for $z\ge0$,  and apply Lemma  \ref{lem:downcross}.   Then proves \eqref{eq:saturated-conditional}.

The weight $\rho_r^2p(x_{i+1}(r))$ is nonnegative and $\F_r$-measurable.  The tower property and \eqref{eq:saturated-conditional} imply
\begin{align*}
 &\E\left[\rho_r^2p(x_{i+1}(r))(1-\e^{-Z_{r,h}})^2\right]\\
 &\qquad\le C_R\E\left[\rho_r^2p(x_{i+1}(r))
 \left(1\wedge\frac{h}{x_i(r)}\right)\right].
\end{align*}
Integrate over $r \in [0, t-h]$ and apply Lemma  \ref{cor:dyadic} to obtain \eqref{eq:saturated-integrated}.
\end{proof}

\subsection{Exponential estimates for semimartingales }\label{subsec:bounded-semi}

The state dependence of $\gamma_i$ introduces a bounded semimartingale correction into the exact tangent-flow ratio \eqref{eq:ratio-formula}.  The next elementary estimate controls it.
\begin{lemma}\label{lem:bounded-semimartingale}
Fix $T>0$ and let
\begin{equation}\label{eq:B-general}
 B_{r,s}=\int_r^s a_u\dd u+\sum_{j=1}^K\int_r^s c_u^j\dd W_u^j,
 \qquad 0\le r\le s\le T,
\end{equation}
where $a,c^j$ are progressively measurable and
$
 \abs{a_u}+\sum_{j=1}^K\abs{c_u^j}^2\le C_R.
$ 
Then, for $0<h\le1$ with $r+h\le T$,
\begin{equation}\label{eq:B-exp-increment}
 \E\left[\abs{1-\e^{-B_{r,r+h}}}^2\mid\F_r\right]
 \le C_Rh,
\end{equation}
and for every $q\in\R$,
\begin{equation}\label{eq:B-exp-moment}
 \E\left[\e^{qB_{r,s}}\mid\F_r\right]\le C_{q,T,R}.
\end{equation}
\end{lemma}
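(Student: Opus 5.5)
The plan is to prove the moment bound \eqref{eq:B-exp-moment} first, via an exponential martingale, and then derive \eqref{eq:B-exp-increment} from it together with a Taylor-type bound.

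\textbf{Moment bound.} Write $N_{r,s}=\sum_j\int_r^s c_u^j\dd W_u^j$. Then
\[
\e^{qB_{r,s}}=\e^{q\int_r^s a_u\dd u}\,\e^{qN_{r,s}}\le \e^{|q|C_RT}\,\e^{qN_{r,s}}.
\]
Next write
\[
\e^{qN_{r,s}}=\mathcal E_{r,s}\,\exp\Bigl\{\tfrac{q^2}{2}\int_r^s\textstyle\sum_j|c^j_u|^2\dd u\Bigr\},
\]
where $\mathcal E_{r,s}=\exp\{qN_{r,s}-\frac{q^2}{2}\int_r^s\sum_j|c_u^j|^2\dd u\}$ is the stochastic exponential. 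Because $\sum_j|c^j|^2\le C_R$, Novikov's condition holds, so $\mathcal E_{r,\cdot}$ is a true martingale on $[r,T]$ with $\E[\mathcal E_{r,s}\mid\F_r]=1$. The remaining exponential factor is deterministically at most $\e^{q^2C_RT/2}$. Together these give \eqref{eq:B-exp-moment} with $C_{q,T,R}=\exp\{|q|C_RT+q^2C_RT/2\}$.

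\textbf{Increment bound.} Use the elementary inequality $|1-\e^{-b}|\le|b|(1+\e^{-b})$, valid for all real $b$ by the mean value theorem, since $\e^{-\xi}\le 1+\e^{-b}$ for $\xi$ between $0$ and $b$. This gives
\[
|1-\e^{-B}|^2\le 2|B|^2(1+\e^{-2B}).
\]
Apply conditional Cauchy--Schwarz:
\[
\E\bigl[|B|^2\e^{-2B}\mid\F_r\bigr]\le \bigl(\E[|B|^4\mid\F_r]\bigr)^{1/2}\bigl(\E[\e^{-4B}\mid\F_r]\bigr)^{1/2}.
\]
The second factor is bounded by \eqref{eq:B-exp-moment} with $q=-4$.

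It then remains to show $\E[|B_{r,r+h}|^4\mid\F_r]\le C_Rh^2$ and $\E[|B_{r,r+h}|^2\mid\F_r]\le C_Rh$. For the drift part, $|\int_r^{r+h}a_u\dd u|\le C_Rh\le C_R h^{1/2}$ because $h\le1$. For the martingale part, the conditional BDG inequality gives
\[
\E\bigl[|N_{r,r+h}|^4\mid\F_r\bigr]\le C\,\E\Bigl[\Bigl(\int_r^{r+h}\textstyle\sum_j|c^j|^2\Bigr)^2\Bigm|\F_r\Bigr]\le C_Rh^2,
\]
and the conditional It\^o isometry gives the second-moment analogue, $\E[|N_{r,r+h}|^2\mid\F_r]\le C_Rh$. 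Combining these yields $\E[|1-\e^{-B_{r,r+h}}|^2\mid\F_r]\le C_Rh$.

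\textbf{Main obstacle.} The only delicate points are justifying the conditional versions of the martingale inequalities and the true-martingale property of $\mathcal E$. Both follow from the boundedness of the coefficients, using Novikov's criterion applied on $[r,T]$ and regular conditional expectations given $\F_r$. There is no real difficulty beyond this.
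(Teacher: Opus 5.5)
Your proof is correct and follows essentially the same route as the paper: the exponential-martingale (Novikov) bound gives the conditional exponential moments. Conditional BDG gives $\E[|B_{r,r+h}|^4\mid\F_r]\le C_Rh^2$, which is then combined with conditional Cauchy--Schwarz against an exponential moment. The only cosmetic difference is that you use $|1-\e^{-b}|\le|b|(1+\e^{-b})$ instead of the paper's $|1-\e^{-z}|^2\le\e^{2|z|}|z|^2$. This costs you an extra (trivial) second-moment bound, but it avoids the symmetrization $\e^{4|B|}\le\e^{4B}+\e^{-4B}$.
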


\begin{proof}
	
Write $B=A+M$.  For $0\le r\le s\le T$,
$\abs{A_{r,s}}\le C_R(s-r)$ and
$\langle M\rangle_{r,s}\le C_R(s-r)$, together with the exponential-martingale estimate proves
\eqref{eq:B-exp-moment}. 

Let $s=r+h$, where $0<h\le1$.  By the conditional BDG inequality and the preceding deterministic bounds, we have
$
\E\left[\abs{B_{r,r+h}}^4\mid\F_r\right]\le C_Rh^2.
$ 
Combining 
$
\abs{1-\e^{-z}}^2\le\e^{2\abs z}\abs z^2
$ 
with the conditional Cauchy--Schwarz inequality yields
\begin{align*}
 \E[\abs{1-\e^{-B}}^2\mid\F_r]
 &\le
 \left(\E[\e^{4\abs B}\mid\F_r]\right)^{1/2}
 \left(\E[\abs{B}^4\mid\F_r]\right)^{1/2}\\
 &\le C_Rh,
\end{align*}
where $\e^{4\abs B}\le\e^{4B}+\e^{-4B}$ and
\eqref{eq:B-exp-moment} is used.  This proves
\eqref{eq:B-exp-increment}.
\end{proof}

\section{Verification of MC2}\label{sec6}
\setcounter{equation}{0}
\renewcommand{\theequation}{\thesection.\arabic{equation}}
\renewcommand{\theHequation}{\thesection.\arabic{equation}}

The proof of fractional regularity compares two Malliavin derivatives whose parameter times differ by a short interval.  The central point is to factor the diagonal part of the tangent flow exactly. In this section, we define, 
$
 \Phi_i^n(x):=\chi_i(x)g_n(x_i).
$ 

\subsection{Diagonal tangent-flow factorization}\label{subsec:factorization}

Fix $\ell$ and $r$.  Let $J_{r,s}^\ell$ solve
\begin{equation}\label{eq:J-equation}
 \begin{cases}
 \dd J_{r,s}^\ell=
 \rho_s\partial_\ell b_\ell(X_s^{n,R})J_{r,s}^\ell\dd s
 +\rho_s\partial_\ell\sigma_\ell^n(X_s^{n,R})J_{r,s}^\ell\dd W_s^\ell,\\
 J_{r,r}^\ell=1.
 \end{cases}
\end{equation}
Define the diagonal reference tangent vector
\begin{equation}\label{eq:Yhat-def}
 \widehat Y_i^\ell(s;r)
 :=\delta_{i\ell}\rho_r\sigma_\ell^n(X_r^{n,R})J_{r,s}^\ell.
\end{equation}
Then $\widehat Y^\ell(r;r)=Y^\ell(r;r)$.

Define, 
\begin{equation}\label{eq:lambda-def}
 \lambda_\ell^n(s)
 :=\rho_s m_\ell(s)\frac{g'(x_\ell(s))}{g(x_\ell(s))}
 +\frac12\rho_s^2\gamma_\ell(X_s^{n,R})p(x_{\ell+1}(s))p(x_\ell(s))
 \frac{g''(x_\ell(s))}{g(x_\ell(s))}.
\end{equation}

\begin{lemma}[Positivity and size of the singular activity]\label{lem:lambda}
For every $s\in[0,T]$,
\begin{equation}\label{eq:lambda-bounds}
 0\le\lambda_\ell^n(s)\le\frac{C_R}{x_\ell(s)}.
\end{equation}
\end{lemma}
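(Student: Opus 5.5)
The plan is to compute $g'/g$ and $g''/g$ explicitly and then use the structural inequality $m_\ell\ge\frac14\gamma_\ell x_{\ell+1}$ from \eqref{eq:reserve-def}. Throughout we work on $\{x_\ell(s)>0\}$, which has full probability by Lemma~\ref{lem:positivity-increments}. Writing $g(u)=u(u+\eps)^{-1/2}$, we get
\[
g'(u)=\frac{u+2\eps}{2(u+\eps)^{3/2}},\qquad g''(u)=-\frac{u+4\eps}{4(u+\eps)^{5/2}},
\]
and therefore
\[
\frac{g'(u)}{g(u)}=\frac{u+2\eps}{2u(u+\eps)},\qquad
p(u)\frac{g''(u)}{g(u)}=-\frac{u(u+4\eps)}{4(u+\eps)^{2}}.
\]
It follows that $p(u)g''(u)/g(u)\in[-1,0]$: indeed $u(u+4\eps)\le 4(u+\eps)^2$, since $4u^2+8u\eps+4\eps^2-u^2-4u\eps\ge0$. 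Also $0<g'/g\le 1/u$ by \eqref{eq:log-derivative}.

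\emph{Upper bound.} The second term of $\lambda^n_\ell$ is nonpositive. For the first, $\rho_s m_\ell\,g'/g\le m_\ell/x_\ell$. On the localized state space, $m_\ell$ is a continuous function on a compact set, hence $0\le m_\ell\le C_R$. This gives $\lambda^n_\ell\le C_R/x_\ell(s)$.

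\emph{Lower bound.} This is the step where the structure genuinely enters, because the negative $g''$ term must be absorbed. Use $m_\ell\ge\frac14\gamma_\ell x_{\ell+1}\ge\frac14\gamma_\ell p(x_{\ell+1})$ (by \eqref{eq:p-basic}) and $\rho_s^2\le\rho_s$. Then
\[
\lambda_\ell^n\ge \rho_s\gamma_\ell p(x_{\ell+1})\left[\frac14\frac{g'}{g}+\frac12 p\frac{g''}{g}\right](x_\ell),
\]
using that the $g''$ term is nonpositive, so replacing $\rho_s^2$ by $\rho_s$ only decreases it. It therefore suffices to show $g'+2pg''\ge0$, which is exactly \eqref{eq:g-convex-combination}; the bracket equals $\frac{1}{4g}(g'+2pg'')$ and $g>0$. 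If one wants to double-check \eqref{eq:g-convex-combination} directly, the bracket is a positive multiple of
\[
(u+2\eps)(u+\eps)-u^2(u+4\eps)\cdot\frac{u\,(u+\eps)^{-1}}{u}\cdot\frac{1}{1},
\]
and more cleanly reduces, after multiplying by $4u(u+\eps)^2$, to $(u+2\eps)(u+\eps)-u^2(u+4\eps)/(u+\eps)\cdot\ldots$. Rather than redo this, I would simply cite Lemma~\ref{lem:regularization}. The main point to watch is the direction of the $\rho^2\le\rho$ replacement and that $x_{\ell+1}\ge p(x_{\ell+1})$, both of which work in our favor. Combining the two bounds gives \eqref{eq:lambda-bounds}.
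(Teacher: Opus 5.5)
Your argument is correct and is essentially the paper's proof. For the upper bound you drop the nonpositive $g''$ term and use $g'/g\le 1/u$ with $0\le m_\ell\le C_R$; for the lower bound you use $m_\ell\ge\frac14\gamma_\ell p(x_{\ell+1})$, $\rho_s^2\le\rho_s$ against $g''\le0$, and \eqref{eq:g-convex-combination}. One slip is harmless: in fact $p(u)g''(u)/g(u)=g(u)g''(u)=-\frac{u(u+4\eps)}{4(u+\eps)^{3}}$, not with $(u+\eps)^2$, so it is not confined to $[-1,0]$ uniformly in $\eps$, but you only use its sign; the aborted ``double-check'' passage can be deleted, since directly $g'+2pg''=\frac{5u\eps^2+2\eps^3}{2(u+\eps)^{7/2}}\ge0$.
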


\begin{proof}
Note that
$$
m_\ell\ge \frac{\gamma_\ell x_{\ell+1}}{4}
\ge \frac{\gamma_\ell p(x_{\ell+1})}{4},
\qquad g''\le 0.
$$
Using these inequalities together with \eqref{eq:g-convex-combination}, 
 we  have 
\begin{align*}
 &m_\ell g'(x_\ell)
 +\frac12\rho_s\gamma_\ell p(x_{\ell+1})p(x_\ell)g''(x_\ell)\\
 &\quad\ge\frac14\gamma_\ell p(x_{\ell+1})
 \left[g'(x_\ell)+2\rho_sp(x_\ell)g''(x_\ell)\right]\\
 &\quad\ge\frac14\gamma_\ell p(x_{\ell+1})
 \left[g'(x_\ell)+2p(x_\ell)g''(x_\ell)\right]\ge0,
\end{align*}
This proves the lower bound.
For the upper bound, by \eqref{eq:log-derivative} and localization, we have 
\[
 \lambda_\ell^n(s)\le\rho_sm_\ell(s)\frac{g'(x_\ell(s))}{g(x_\ell(s))}
 \le\frac{C_R}{x_\ell(s)}.
\]
\end{proof}

Let
$
 h_\ell(x):=\log\chi_\ell(x),
$ 
 define 
\begin{align}
 \mathfrak R_\ell^n(s)
 &:=\rho_s\partial_\ell b_\ell(X_s^{n,R})
 -\rho_s x_\ell(s)\beta_\ell(X_s^{n,R})
 \frac{g'(x_\ell(s))}{g(x_\ell(s))}\notag\\
 &\quad-\rho_s\sum_{j=1}^K b_j(X_s^{n,R})\partial_jh_\ell(X_s^{n,R})
 -\frac12\rho_s^2\sum_{j=1}^K\abs{\sigma_j^n(X_s^{n,R})}^2
 \partial_{jj}h_\ell(X_s^{n,R})\notag\\
 &\quad-\frac12\rho_s^2\abs{\sigma_\ell^n(X_s^{n,R})}^2
 \abs{\partial_\ell h_\ell(X_s^{n,R})}^2\notag\\
 &\quad-\rho_s^2\abs{\sigma_\ell^n(X_s^{n,R})}^2
 \partial_\ell h_\ell(X_s^{n,R})
 \frac{g'(x_\ell(s))}{g(x_\ell(s))},
 \label{eq:d-bounded-def}
\end{align}
and, for $j\ne\ell$,
\begin{equation}\label{eq:c-bounded-def}
 c_{\ell j}^n(s):=\rho_s\sigma_j^n(X_s^{n,R})
 \partial_jh_\ell(X_s^{n,R}).
\end{equation}
By \eqref{eq:log-derivative},  we have
\begin{equation}\label{eq:d-c-bounded}
 \abs{\mathfrak R_\ell^n(s)}+
 \sum_{j\ne\ell}\abs{c_{\ell j}^n(s)}^2\le C_R.
\end{equation}
For $r\le s$, define
\begin{align}
 A_{r,s}^\ell&:=\int_r^s\lambda_\ell^n(u)\dd u,
 \label{eq:A-rs}\\
 B_{r,s}^\ell&:=-\int_r^s\mathfrak R_\ell^n(u)\dd u
 +\sum_{j\ne\ell}\int_r^sc_{\ell j}^n(u)\dd W_u^j.
 \label{eq:B-rs}
\end{align}

\begin{lemma}[Semimartingale ratio formula]\label{lem:ratio}
For $0\le r\le s\le T$,
\begin{equation}\label{eq:ratio-formula}
 \frac{\Phi_\ell^n(X_r^{n,R})J_{r,s}^\ell}
 {\Phi_\ell^n(X_s^{n,R})}
 =\exp\{-A_{r,s}^\ell-B_{r,s}^\ell\}.
\end{equation}
Moreover, for every $q\in\R$,
\begin{equation}\label{eq:B-uniform-exp}
 \E\left[\exp\{qB_{r,s}^\ell\}\mid\F_r\right]
 \le C_{q,T,R},
\end{equation}
and
\begin{equation}\label{eq:Yhat-energy}
 \E\frac{\abs{\widehat Y_\ell^\ell(s;r)}^2}{d_\ell(s)}
 \le C_{T,R}.
\end{equation}
\end{lemma}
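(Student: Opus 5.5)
The plan is to apply It\^o's formula to $\log\Phi_\ell^n(X_s^{n,R}) = h_\ell(X_s^{n,R}) + \log g(x_\ell(s))$ and to $\log J_{r,s}^\ell$, and to check that their difference has differential $-\lambda_\ell^n\,\dd s + \mathfrak R_\ell^n\,\dd s - \sum_{j\ne\ell}c_{\ell j}^n\,\dd W^j$.

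By \eqref{eq:strict-positive}, $x_\ell(s)>0$, so $g(x_\ell(s))>0$ and $\Phi_\ell^n>0$, and the logarithms make sense. Since \eqref{eq:J-equation} is linear, $J^\ell$ is a stochastic exponential:
\[
\log J_{r,s}^\ell=\int_r^s\Bigl(\rho\,\partial_\ell b_\ell-\tfrac12\rho^2|\partial_\ell\sigma_\ell^n|^2\Bigr)\dd u+\int_r^s\rho\,\partial_\ell\sigma_\ell^n\,\dd W^\ell_u.
\]
We have $\sigma_\ell^n=\Phi_\ell^n g(x_{\ell+1})$, and hence
\[
\partial_\ell\sigma_\ell^n=\sigma_\ell^n\Bigl(\partial_\ell h_\ell+\tfrac{g'(x_\ell)}{g(x_\ell)}\Bigr)
\]
(for $\ell+1\ne\ell$, i.e.\ $K\ge2$). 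Next expand $\dd\log g(x_\ell)$ and $\dd h_\ell(X)$ by It\^o's formula, using $\dd x_j=\rho b_j\,\dd s+\rho\sigma_j^n\,\dd W^j$ with independent $W^j$.

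The $\dd W^\ell$ coefficient of $\dd\log\Phi_\ell^n$ is $\rho\sigma_\ell^n(\partial_\ell h_\ell+g'/g)$, which matches that of $\dd\log J$ exactly. The $\dd W^j$ terms with $j\ne\ell$ come only from $h_\ell$ and equal $c^n_{\ell j}$; this produces the martingale part of $B$.

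For the drifts, write $b_\ell=x_\ell\beta_\ell+m_\ell$. The $\log g$ drift is
\[
\rho b_\ell\frac{g'}{g}+\tfrac12\rho^2|\sigma_\ell^n|^2\,\frac{g''g-g'^2}{g^2},
\qquad |\sigma_\ell^n|^2=\gamma_\ell p(x_\ell)p(x_{\ell+1}).
\]
Its $m_\ell$ and $g''$ parts give exactly $\lambda_\ell^n$. The $-\tfrac12\rho^2|\sigma_\ell^n|^2 g'^2/g^2$ part cancels against the corresponding piece of $-\tfrac12\rho^2|\partial_\ell\sigma_\ell^n|^2$ in $\log J$. The remaining cross terms reproduce the lines of \eqref{eq:d-bounded-def}. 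Here I must be careful: the second-order term of $h_\ell$ should involve all second derivatives $\partial_{jj}h_\ell$ weighted by $|\sigma_j^n|^2$, with no mixed terms since the noises are independent. This bookkeeping yields \eqref{eq:ratio-formula} after exponentiating.

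Then \eqref{eq:B-uniform-exp} follows from Lemma~\ref{lem:bounded-semimartingale}, \eqref{eq:B-exp-moment}, applied to $B^\ell$. Its coefficients are bounded by \eqref{eq:d-c-bounded}, which rests on $x_\ell g'/g\le1$, $|\sigma_\ell^n|^2 g'/g\le C_R\,p(x_\ell)/x_\ell\le C_R$, and boundedness on the localized state space.

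For \eqref{eq:Yhat-energy}, write
\[
\widehat Y_\ell^\ell(s;r)=\rho_r g(x_{\ell+1}(r))\,\Phi_\ell^n(X_r)J_{r,s}^\ell=\rho_r g(x_{\ell+1}(r))\,\Phi_\ell^n(X_s)\,e^{-A-B}.
\]
Since $A\ge0$ by Lemma~\ref{lem:lambda}, we get
\[
|\widehat Y_\ell^\ell|^2/d_\ell(s)\le C_R\,\frac{p(x_\ell(s))}{d_\ell(s)}\,e^{-2B}\le C_R e^{-2B},
\]
using $\chi_\ell^2\le\overline\gamma$, $g(x_{\ell+1})^2\le C_R$, and $p\le x\le d$. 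Taking expectations and using \eqref{eq:B-uniform-exp} with $q=-2$ gives $C_{T,R}$.

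The main obstacle is the drift bookkeeping: confirming that every singular $1/x_\ell$ term either lands in $\lambda_\ell^n$ (whose sign Lemma~\ref{lem:lambda} controls) or is bounded after multiplication by $p(x_\ell)$ or $x_\ell$. Note also that $\partial_\ell b_\ell$ remains in $\mathfrak R$ and is bounded only via local boundedness of derivatives.
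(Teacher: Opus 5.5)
Your proposal is correct and follows the paper's proof essentially step for step. You apply It\^o's formula to $\log J_{r,s}^\ell-\log\Phi_\ell^n(X_s^{n,R})$ to obtain $-\lambda_\ell^n\,\dd s+\mathfrak R_\ell^n\,\dd s-\sum_{j\ne\ell}c_{\ell j}^n\,\dd W^j$, use Lemma~\ref{lem:bounded-semimartingale} with \eqref{eq:d-c-bounded} for the exponential moments, and combine $A_{r,s}^\ell\ge0$, $|\Phi_\ell^n|^2/d_\ell\le\overline\gamma$ and $q=-2$ for \eqref{eq:Yhat-energy}. Your explicit drift bookkeeping (the cancellation of the $(g'/g)^2$ terms and the identification of $\lambda_\ell^n$ and $\mathfrak R_\ell^n$) checks out and fills in the ``direct calculation'' the paper leaves implicit.
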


\begin{proof}
Apply It\^o's formula to $\log J_{r,s}^\ell$ and to
$\log\Phi_\ell^n(X_s^{n,R})=h_\ell(X_s^{n,R})+\log g(x_\ell(s))$.
We obtain
\[
 \dd\left[
 \log J_{r,s}^\ell-
 \log\Phi_\ell^n(X_s^{n,R})
 \right]
 =-\lambda_\ell^n(s)\dd s
 +\mathfrak R_\ell^n(s)\dd s
 -\sum_{j\ne\ell}c_{\ell j}^n(s)\dd W_s^j.
\]
Integrating from $r$ to $s$ and using $J_{r,r}^\ell=1$ proves \eqref{eq:ratio-formula}.
The estimate \eqref{eq:B-uniform-exp} follows from \eqref{eq:d-c-bounded} and Lemma  \ref{lem:bounded-semimartingale}. Note that  $A_{r,s}^\ell\ge0$ and
$\abs{\Phi_\ell^n(x)}^2/d_\ell
=\gamma_\ell(x)p(x_\ell)/d_\ell\le\overline\gamma$, we have 
\begin{align*}
 \frac{\abs{\widehat Y_\ell^\ell(s;r)}^2}{d_\ell(s)}
 &=\rho_r^2p(x_{\ell+1}(r))
 \frac{\abs{\Phi_\ell^n(X_s^{n,R})}^2}{d_\ell(s)}
 \e^{-2A_{r,s}^\ell-2B_{r,s}^\ell}\\
 &\le C_R\rho_r^2p(x_{\ell+1}(r))\e^{-2B_{r,s}^\ell},
\end{align*}
Taking conditional expectation with respect to $\F_r$ and applying \eqref{eq:B-uniform-exp} with $q=-2$ yields  \eqref{eq:Yhat-energy}. 
\end{proof}

\subsection{The non-diagonal remainder}\label{subsec:remainder}

Define
\begin{equation}\label{eq:V-def}
 V^\ell(s;r):=Y^\ell(s;r)-\widehat Y^\ell(s;r).
\end{equation}
Then $V^\ell(r;r)=0$.  For each row $i$, define a vector
$\widetilde V^{\ell,(i)}(s;r)$ by
\begin{equation}\label{eq:Vtilde}
 \widetilde V_j^{\ell,(i)}(s;r)=
 \begin{cases}
 V_i^\ell(s;r),&j=i,\\
 V_j^\ell(s;r)+\delta_{j\ell}\widehat Y_\ell^\ell(s;r),&j\ne i.
 \end{cases}
\end{equation}
Subtracting \eqref{eq:J-equation} from the full tangent equation gives
\begin{align}
 \dd V_i^\ell(s;r)
 &=\left[
 \rho_s\sum_j\partial_jb_i(X_s^{n,R})
 \widetilde V_j^{\ell,(i)}(s;r)
 +b_i(s)R_s(Y^\ell(s;r))
 \right]\dd s\notag\\
 &\quad+\left[
 \rho_s\sum_j\partial_j\sigma_i^n(X_s^{n,R})
 \widetilde V_j^{\ell,(i)}(s;r)
 +\sigma_i^n(X_s^{n,R})R_s(Y^\ell(s;r))
 \right]\dd W_s^i.
 \label{eq:V-equation}
\end{align}

\begin{lemma}[Short-time remainder estimate]\label{lem:V}
For $r\le s\le r+1$,
\begin{equation}\label{eq:V-pointwise}
 \E\cQ_s(V^\ell(s;r))\le C_{T,R}(s-r).
\end{equation}
Consequently, for $0<h<t\le T$,
\begin{equation}\label{eq:V-integrated}
 \E\int_0^{t-h}\cQ_{r+h}(V^\ell(r+h;r))\dd r
 \le C_{T,R}h.
\end{equation}
\end{lemma}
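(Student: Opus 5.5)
The plan is to treat $V^\ell(\cdot;r)$ as a solution of the generic tangent equation \eqref{eq:generic-tangent} driven by an additional forcing. We run the same It\^o computation as in the proof of Proposition \ref{prop:weighted-stability} for $\cQ_s(V^\ell(s;r))$, starting from $V^\ell(r;r)=0$, and show that the forcing produces a source term of order one in expectation. Gronwall then gives \eqref{eq:V-pointwise}.

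\textbf{Step 1: It\^o's formula for $\cQ_s(V^\ell)$.} Apply It\^o's formula to $V_i^\ell(s;r)^2/d_i(s)$ using \eqref{eq:V-equation}. Row $i$ has exactly the structure \eqref{eq:generic-tangent}, except that the vector fed into the off-diagonal slots is $\widetilde V^{\ell,(i)}$ instead of $V^\ell$, and the cutoff terms carry $R_s(Y^\ell)$. The diagonal entry of $\widetilde V^{\ell,(i)}$ is $V_i^\ell$. Hence the drift is
\[
\sum_i\cG_i\bigl(V_i^\ell;\widetilde V^{\ell,(i)}\bigr)+\sum_i\cC_i\bigl(V^\ell;\widetilde V^{\ell,(i)},Y^\ell\bigr).
\]
Here $\cG_i$ and $\cC_i$ are defined by \eqref{eq:G-row-def} and \eqref{eq:C-cutoff-def} with $u=V_i^\ell$ and $v=\widetilde V^{\ell,(i)}$. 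This identification relies on the remark that $\cS_i$ equals $\langle\nabla\sigma_i^n,v\rangle-\sigma_i^n u/d_i$. In that expression the cross variation with $d_i$ is computed using $u=V_i$ alone, so it works for any $v$ with $v_i=u$.

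\textbf{Step 2: apply Lemmas \ref{lem:rowwise} and \ref{lem:cutoff}.} Lemma \ref{lem:rowwise} bounds the $\cG_i$ terms, and Lemma \ref{lem:cutoff} with $\vartheta<c_0/2$ bounds the $\cC_i$ terms. The negative term $-c_0\Delta_i V_i^2/d_i^2$ absorbs the $\vartheta$-parts, which leaves
\[
\dd\cQ_s(V^\ell)\le C_R\Bigl[\cQ_s(V^\ell)+\cQ_s(Y^\ell)+\sum_i\sum_{j\ne i}\frac{|\widetilde V^{\ell,(i)}_j|^2}{d_j}\Bigr]\dd s+\dd M_s.
\]
For $j\ne i$, $|\widetilde V^{\ell,(i)}_j|^2\le 2|V_j^\ell|^2+2\delta_{j\ell}|\widehat Y^\ell_\ell|^2$. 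The double sum is therefore at most $C\cQ_s(V^\ell)+C|\widehat Y_\ell^\ell(s;r)|^2/d_\ell(s)$.

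\textbf{Step 3: expectations and Gronwall.} The source terms are bounded uniformly in $s$ and $n$. Proposition \ref{prop:weighted-stability} together with the first inequality of \eqref{eq0719a} (applied in its weighted form \eqref{eq:weighted-stability}) gives $\E\cQ_s(Y^\ell(s;r))\le C_{T,R}$, and \eqref{eq:Yhat-energy} gives $\E|\widehat Y_\ell^\ell|^2/d_\ell\le C_{T,R}$. We then use the same localization by $\tau_N$ and the quadratic-variation stopping times as in Subsection \ref{sub3}, followed by Fatou and Gronwall. Since the initial value is $0$, this yields
\[
\E\cQ_s(V^\ell(s;r))\le \e^{C_R(s-r)}C_{T,R}(s-r)\le C_{T,R}(s-r)
\]
for $s-r\le1$, which is \eqref{eq:V-pointwise}. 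Taking $s=r+h$ and integrating over $r\in[0,t-h]$ gives \eqref{eq:V-integrated} by Fubini.

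\textbf{Main obstacle.} The delicate point is Step 1. One must check that replacing $v=V$ by $\widetilde V^{(i)}$ in the off-diagonal slots does not destroy the cancellation in Lemma \ref{lem:rowwise}. That cancellation uses $v_{i+1}$ only through the $H_i$ term and the quantities $\frac{1}{d_{i+1}}|v_{i+1}+\cdots|^2$, all of which are controlled by $\sum_j v_j^2/d_j$. This is exactly why the lemma is stated for general $v$ with $v_i=u$, and it should go through. A further minor care point is that the diagonal row $i=\ell$ loses the $\widehat Y$ contribution, which has been moved into the $J$ equation; this is consistent because $\widehat Y_i^\ell=0$ for $i\ne\ell$.
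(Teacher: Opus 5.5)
Your proposal is correct and follows essentially the paper's argument: It\^o's formula for $\cQ_s(V^\ell)$ starting from $V^\ell(r;r)=0$, Lemma~\ref{lem:rowwise} with $u=V_i^\ell$ and $v=\widetilde V^{\ell,(i)}$, Lemma~\ref{lem:cutoff} with $\vartheta<c_0/2$, and Gronwall after localizing the martingale. The only cosmetic difference is that you control $\E\cQ_s(Y^\ell)$ through Proposition~\ref{prop:weighted-stability}, whereas the paper takes $y=V^\ell+\widehat Y^\ell_\ell e_\ell$ (which equals $Y^\ell$), splits $\cQ_s(y)\le 2\cQ_s(V^\ell)+2|\widehat Y^\ell_\ell|^2/d_\ell$, and so needs only \eqref{eq:Yhat-energy}.
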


\begin{proof}
Apply It\^o's formula to $\cQ_s(V^\ell)$, its  drift exactly decomposes into the rowwise structural terms
and the cutoff perturbation terms. In row $i$, use Lemma \ref{lem:rowwise} with $u=V_i^\ell$ and $v=\widetilde V^{\ell,(i)}$.  Since
\[
 \sum_j\frac{\abs{\widetilde V_j^{\ell,(i)}}^2}{d_j}
 \le2\cQ_s(V^\ell)
 +2\frac{\abs{\widehat Y_\ell^\ell(s;r)}^2}{d_\ell(s)}.
\]
Then, the sum of the structural terms is bounded by
\[
 \text{structural drift} \leq -c_0\sum_i\Delta_i\frac{\abs{V_i^\ell}^2}{d_i^2}
 +C_R\cQ_s(V^\ell)
 +C_R\frac{\abs{\widehat Y_\ell^\ell(s;r)}^2}{d_\ell(s)}.
\]
For the cutoff terms, apply  Lemma \ref{lem:cutoff} with
$u=V^\ell$, $v^{(i)}=\widetilde V^{\ell,(i)}$, and
$y=V^\ell+\widehat Y_\ell^\ell e_\ell$, where $e_\ell$ denotes the $\ell$th standard basis vector of $\mathbb R^K$.  For $\vartheta<c_0/2$, we obtain, 
\begin{align*}
	\text{cutoff drift}
	\le{}&\frac{c_0}{2}\sum_i\frac{\Delta_i}{d_i^2}\abs{V_i^\ell}^2
	+C_R\cQ_s(V^\ell)
	+C_R\frac{\abs{\wh Y_\ell(s;r)}^2}{d_\ell(s)}.
\end{align*}
Hence, after localization of the martingale, we have 
\begin{align*}
 \frac{\dd}{\dd s}\E\cQ_s(V^\ell)
 \le C_R\E\cQ_s(V^\ell)
 +C_R\E\frac{\abs{\widehat Y_\ell^\ell(s;r)}^2}{d_\ell(s)}.
\end{align*}
Note that the last expectation is bounded by \eqref{eq:Yhat-energy}.  Therefore,  Gronwall's inequality proves \eqref{eq:V-pointwise}.  Finally,  integrating over $r\in[0,t-h]$ with $s=r+h$ gives \eqref{eq:V-integrated}.
\end{proof}

\subsection{Weighted estimate for the initial defect}\label{subsec:defect}

For $s\ge r$, define
\begin{equation}\label{eq:Gamma-def}
 \Gamma_i^\ell(s;r)
 :=\delta_{i\ell}\rho_s\sigma_\ell^n(X_s^{n,R})
 -Y_i^\ell(s;r).
\end{equation}
At $s=r$, $\Gamma^\ell(r;r)=0$.  At $s=r+h$,
$
	\Gamma^\ell(r+h;r)=Z^{\ell,r,h}(r+h).
$

By \eqref{eq:V-def} and \eqref{eq:ratio-formula}, $ \Gamma^\ell$  can be decomposed as
\begin{equation}\label{eq:Gamma-CPV}
 \Gamma^\ell(s;r)=C^\ell(s;r)+P^\ell(s;r)-V^\ell(s;r),
\end{equation}
where only the $\ell$th components of $C^\ell$ and $P^\ell$ are nonzero and
\begin{align}
 C_\ell^\ell(s;r)
 &:=\Phi_\ell^n(X_s^{n,R})
 \left[\rho_sg(x_{\ell+1}(s))-\rho_rg(x_{\ell+1}(r))\right],
 \label{eq:C-defect}\\
 P_\ell^\ell(s;r)
 &:=\rho_rg(x_{\ell+1}(r))\Phi_\ell^n(X_s^{n,R})
 \left[1-\e^{-A_{r,s}^\ell-B_{r,s}^\ell}\right].
 \label{eq:P-defect}
\end{align}

\begin{lemma}[Weighted initial-defect estimate]\label{lem:defect}
For every $0<h<t\le T$,
\begin{equation}\label{eq:defect-estimate}
 \sum_{\ell=1}^K\E\int_0^{t-h}
 \cQ_{r+h}(\Gamma^\ell(r+h;r))\dd r
 \le C_{T,R}h^{1/2}.
\end{equation}
\end{lemma}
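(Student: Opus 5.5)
We use the decomposition \eqref{eq:Gamma-CPV} at $s=r+h$. Since $\cQ$ is a sum of squares with positive weights, we have $\cQ_{r+h}(\Gamma^\ell)\le 3\cQ_{r+h}(C^\ell)+3\cQ_{r+h}(P^\ell)+3\cQ_{r+h}(V^\ell)$, and we estimate the three pieces separately after integrating in $r\in[0,t-h]$. The $V$-part is already done: by \eqref{eq:V-integrated} it contributes at most $C_{T,R}h\le C_{T,R}h^{1/2}$. Only the $\ell$th components of $C^\ell$ and $P^\ell$ are nonzero, so both weighted energies reduce to a single quotient by $d_\ell(r+h)$. We will repeatedly use $|\Phi_\ell^n(x)|^2/d_\ell=\gamma_\ell(x)p(x_\ell)/d_\ell\le\overline\gamma$.

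\textbf{The $C$-part.} We have $\cQ_{r+h}(C^\ell)\le\overline\gamma\,|\rho_{r+h}g(x_{\ell+1}(r+h))-\rho_rg(x_{\ell+1}(r))|^2$. We split this into $2\overline\gamma|\rho_{r+h}-\rho_r|^2p(x_{\ell+1}(r+h))+2\overline\gamma|g(x_{\ell+1}(r+h))-g(x_{\ell+1}(r))|^2$. The first term is bounded by $C_R|X_{r+h}-X_r|^2$ because $\rho_R$ is Lipschitz and $p$ is bounded on the localized state space. For the second term, \eqref{eq:g-holder} gives $|g(u)-g(v)|^2\le|u-v|$, so its expectation is at most $\E|x_{\ell+1}(r+h)-x_{\ell+1}(r)|\le(C_{T,R}h)^{1/2}$ by \eqref{eq:time-increments}. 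Integrating in $r$, the $C$-part is $O(h^{1/2})$. This is exactly where the square-root rate comes from.

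\textbf{The $P$-part.} Here $\cQ_{r+h}(P^\ell)\le\overline\gamma\rho_r^2p(x_{\ell+1}(r))\bigl(1-\e^{-A-B}\bigr)^2$, where $A=A^\ell_{r,r+h}$ and $B=B^\ell_{r,r+h}$. We write $1-\e^{-A-B}=(1-\e^{-A})+\e^{-A}(1-\e^{-B})$ and use $A\ge0$ from Lemma~\ref{lem:lambda}, which gives
\[
\bigl(1-\e^{-A-B}\bigr)^2\le 2\bigl(1-\e^{-A}\bigr)^2+2\bigl|1-\e^{-B}\bigr|^2 .
\]
For the $A$-term, we apply Lemma~\ref{lem:saturated} with $\zeta_\ell=\lambda^n_\ell$, which is admissible by \eqref{eq:lambda-bounds}; its integrated form \eqref{eq:saturated-integrated} gives exactly $C_{T,R}h^{1/2}$ after integrating against the weight $\rho_r^2p(x_{\ell+1}(r))$. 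For the $B$-term, the weight is $\F_r$-measurable and bounded, so we condition on $\F_r$. Lemma~\ref{lem:bounded-semimartingale}, applicable by \eqref{eq:d-c-bounded}, gives $\E[|1-\e^{-B}|^2\mid\F_r]\le C_Rh$. Integrating in $r$ then yields $C_{T,R}h$.

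Summing the three parts over $\ell$ gives \eqref{eq:defect-estimate}. The only delicate point is the $A$-term: $\lambda_\ell^n$ is unbounded near the face $x_\ell=0$, and there is no small-time bound on $A$ without the catalyst weight. That difficulty is already absorbed by Lemma~\ref{lem:saturated}, so we must make sure the prefactor multiplying $(1-\e^{-A})^2$ is precisely $\rho_r^2p(x_{\ell+1}(r))$ evaluated at time $r$. This is why we bound $|\Phi_\ell^n(X_{r+h})|^2/d_\ell(r+h)$ by a constant rather than keeping any time-$(r+h)$ weight. Everything else is routine bookkeeping, keeping all constants independent of $n$.
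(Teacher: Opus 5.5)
Your proposal is correct and follows the paper's proof essentially step by step. You use the same splitting $\Gamma^\ell=C^\ell+P^\ell-V^\ell$, and the H\"older bound \eqref{eq:g-holder} with \eqref{eq:time-increments} for the $C$-part, which is where the $h^{1/2}$ rate comes from. For the $P$-part you use Lemma~\ref{lem:saturated} and Lemma~\ref{lem:bounded-semimartingale} on the $A$- and $B$-factors with the weight $\rho_r^2p(x_{\ell+1}(r))$ frozen at time $r$, and \eqref{eq:V-integrated} for the remainder, exactly as the paper does. Your justification of $(1-\e^{-A-B})^2\le 2(1-\e^{-A})^2+2|1-\e^{-B}|^2$ via $A\ge0$ makes explicit a step the paper leaves implicit.
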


\begin{proof}
Since  $\abs{\Phi_\ell^n(X_s)}^2/d_\ell(s)\le\overline\gamma$, we have, 
\begin{equation}\label{eq:C-quotient}
 \frac{\abs{C_\ell^\ell(s;r)}^2}{d_\ell(s)}
 \le C_R\abs{\rho_sg(x_{\ell+1}(s))-\rho_rg(x_{\ell+1}(r))}^2.
\end{equation}
  By \eqref{eq:g-holder},  the cutoff is Lipschitz and $g^2\le x\le C_R$, we have 
\begin{align*}
 \abs{\rho_sg(x_{\ell+1}(s))-\rho_rg(x_{\ell+1}(r))}^2
 &\le2\abs{g(x_{\ell+1}(s))-g(x_{\ell+1}(r))}^2\\
 &\quad+C_R\abs{X_s^{n,R}-X_r^{n,R}}^2\\
 &\le2\abs{x_{\ell+1}(s)-x_{\ell+1}(r)}
 +C_R\abs{X_s^{n,R}-X_r^{n,R}}^2.
\end{align*}
Using \eqref{eq:time-increments} and Cauchy--Schwarz,
\begin{equation}\label{eq:C-defect-estimate}
 \E\int_0^{t-h}
 \frac{\abs{C_\ell^\ell(r+h;r)}^2}{d_\ell(r+h)}\dd r
 \le C_{T,R}h^{1/2}.
\end{equation}

For $P^\ell$, we have
\begin{equation}\label{eq:P-quotient}
	\begin{aligned}
 \frac{\abs{P_\ell^\ell(s;r)}^2}{d_\ell(s)}
 \le&  C_R\rho_r^2p(x_{\ell+1}(r))
 \abs{1-\e^{-A_{r,s}^\ell-B_{r,s}^\ell}}^2\\
 \leq &   C_R\rho_r^2p(x_{\ell+1}(r))  \left((1-\e^{-A_{r,s}^\ell})^2+\abs{1-\e^{-B_{r,s}^\ell}}^2 \right) 
 \end{aligned}
\end{equation}
By Lemma \ref{lem:lambda} and Lemma  \ref{lem:saturated},  we have  
\begin{equation}\label{eq:A-defect-estimate}
 \E\int_0^{t-h}\rho_r^2p(x_{\ell+1}(r))
 (1-\e^{-A_{r,r+h}^\ell})^2\dd r
 \le C_{T,R}h^{1/2}.
\end{equation}
Note that, the coefficients of $B_{r,s}^\ell$ satisfy \eqref{eq:d-c-bounded}.  Hence,  Lemma  \ref{lem:bounded-semimartingale} and the tower property give
\begin{align}
 &\E\int_0^{t-h}\rho_r^2p(x_{\ell+1}(r))
 \abs{1-\e^{-B_{r,r+h}^\ell}}^2\dd r\notag\\
 &\quad\le C_Rh\E\int_0^{t-h}\rho_r^2p(x_{\ell+1}(r))\dd r
 \le C_{T,R}h.
 \label{eq:B-defect-estimate}
\end{align}
Combining \eqref{eq:P-quotient}--\eqref{eq:B-defect-estimate},
\begin{equation}\label{eq:P-defect-estimate}
 \E\int_0^{t-h}
 \frac{\abs{P_\ell^\ell(r+h;r)}^2}{d_\ell(r+h)}\dd r
 \le C_{T,R}h^{1/2}.
\end{equation}
Finally, Lemma \ref{lem:V} gives
\begin{equation}\label{eq:V-defect-estimate}
 \E\int_0^{t-h}\cQ_{r+h}(V^\ell(r+h;r))\dd r
 \le C_{T,R}h.
\end{equation}
Combine
\eqref{eq:C-defect-estimate}, \eqref{eq:P-defect-estimate}, and \eqref{eq:V-defect-estimate} proves \eqref{eq:defect-estimate}.
\end{proof}

\subsection{Completion  the proof of MC2}\label{subsec:fractional}
Combining  Corollary  \eqref{coll1} and Lemma  \ref{lem:defect} gives the  following  result.
\begin{proposition}[Short-interval estimate]\label{prop:short-interval}
For every $T,R>0$ and $0<h<t\le T$, there exists a constant $ C_{T,R}$ independent of $n$, such that
\begin{equation}\label{eq:short-interval}
 \sup_{n\ge1}\sum_{\ell=1}^K
 \E\int_0^{t-h}
 \abs{D_{r+h}^\ell X_t^{n,R}-D_r^\ell X_t^{n,R}}^2\dd r
 \le C_{T,R}h^{1/2}.
\end{equation}
\end{proposition}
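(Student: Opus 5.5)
The plan is to identify the integrand with the solution of the homogeneous tangent equation started from the initial defect, and then chain the propagation estimate of Corollary \ref{coll1} with the defect bound of Lemma \ref{lem:defect}. The pieces are already in place; what remains is to line them up correctly.

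\emph{Step 1.} Fix $\ell$ and $r\in[0,t-h]$. By \eqref{eq:Y-def} and \eqref{Z-def},
\[
D_{r+h}^\ell X_t^{n,R}-D_r^\ell X_t^{n,R}=Y^\ell(t;r+h)-Y^\ell(t;r)=Z^{\ell,r,h}(t).
\]
On $[r+h,t]$ the process $Z^{\ell,r,h}$ solves the homogeneous linearized equation \eqref{eq:generic-tangent}. This holds because both $Y^\ell(\cdot;r+h)$ and $Y^\ell(\cdot;r)$ satisfy the same linear equation \eqref{eq:tangent} for $s\ge r+h$.

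\emph{Step 2.} Apply \eqref{eq:Z-propagation}, which is Proposition \ref{prop:weighted-stability} with $a=r+h$. This gives
\[
\E|Z^{\ell,r,h}(t)|^2\le C_{T,R}\,\E\cQ_{r+h}\bigl(Z^{\ell,r,h}(r+h)\bigr).
\]
By \eqref{Z-def} and \eqref{eq:Gamma-def}, the initial value is $Z^{\ell,r,h}(r+h)=\Gamma^\ell(r+h;r)$.

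The one point to check is that the constant $C_{T,R}$ in \eqref{eq:weighted-stability} is uniform in the starting time $a\in[0,T]$ and in $n$. It should be: the Gronwall constant in the proof of Proposition \ref{prop:weighted-stability} is $\e^{C_R(t-a)}\le \e^{C_RT}$, and $C_R$ comes only from Lemmas \ref{lem:rowwise} and \ref{lem:cutoff}, which are independent of $n$.

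\emph{Step 3.} Integrate over $r\in[0,t-h]$ using Fubini, sum over $\ell$, and invoke Lemma \ref{lem:defect}:
\[
\sum_{\ell}\E\int_0^{t-h}|D_{r+h}^\ell X_t^{n,R}-D_r^\ell X_t^{n,R}|^2\dd r\le C_{T,R}\sum_\ell\E\int_0^{t-h}\cQ_{r+h}(\Gamma^\ell(r+h;r))\dd r\le C_{T,R}h^{1/2}.
\]
Every constant involved is independent of $n$, so taking $\sup_n$ gives \eqref{eq:short-interval}.

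Measurability in $r$, needed for Fubini, follows from the joint measurability of the Malliavin derivative. I expect no real obstacle here. The substantive work was the defect estimate (Lemma \ref{lem:defect}), and the only thing to verify in this proof is the uniformity of the propagation constant in Step 2.
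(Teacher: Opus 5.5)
Your proposal is correct and matches the paper's proof, which obtains the proposition in one line by combining \eqref{eq:Z-propagation} from Corollary~\ref{coll1} with Lemma~\ref{lem:defect}, using the identification $Z^{\ell,r,h}(r+h)=\Gamma^\ell(r+h;r)$ exactly as you do. Your remark that the Gronwall constant in Proposition~\ref{prop:weighted-stability} is bounded by $\e^{C_RT}$ uniformly in the starting time $a=r+h$ and in $n$ is a detail the paper leaves implicit, and it is needed to integrate over $r$.
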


\begin{proposition}[Fractional Malliavin regularity]\label{prop:fractional}
For every $T,R>0$ and every $\alpha\in(0,1/2)$,  there exists a constant $ C_{T,R}$ such that, for every fixed $t\in[0,T]$,
\begin{equation}\label{eq:fractional}
 \sup_{n\ge1}\E\int_0^T\int_0^T
 \frac{\abs{D_rX_t^{n,R}-D_{r'}X_t^{n,R}}^2}
 {\abs{r-r'}^{1+\alpha}}\dd r\dd r'
 \le C_{T,R,\alpha}.
\end{equation}
Thus MC2 (ii) holds.
\end{proposition}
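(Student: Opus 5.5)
The plan is to turn the short-interval bound of Proposition~\ref{prop:short-interval} into the Sobolev--Slobodeckij bound by a standard layer-cake argument in the lag variable $h=|r-r'|$. Fix $t\in[0,T]$ and $n$. Recall that $D_rX_t^{n,R}=0$ for $r>t$. By symmetry of the integrand it suffices to bound twice the integral over $\{r<r'\}$. Substituting $r'=r+h$ and using Fubini, this integral equals
\[
\int_0^T h^{-1-\alpha}\,\Psi_n(h)\dd h,\qquad
\Psi_n(h):=\sum_{\ell=1}^K\E\int_0^{T-h}\abs{D^\ell_{r+h}X_t^{n,R}-D^\ell_rX_t^{n,R}}^2\dd r .
\]

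We split the $r$-range in $\Psi_n(h)$ according to the position of $t$.

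\textbf{Case $r+h\le t$.} Here $r\in[0,t-h]$, and this piece is exactly the quantity controlled by \eqref{eq:short-interval}. It is therefore bounded by $C_{T,R}h^{1/2}$, uniformly in $n$. We only need $h<t$; when $h\ge t$ this piece is empty.

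\textbf{Case $r+h>t$.} Here $D_{r+h}X_t^{n,R}=0$, so the integrand reduces to $\abs{D_rX_t^{n,R}}^2$ with $r\in(t-h,t]$, an interval of length at most $h$. By Corollary~\ref{coll1}, specifically \eqref{eq0719a}, we have $\E\abs{D_r^\ell X_t^{n,R}}^2\le C_{T,R}$ uniformly in $r\le t$ and $n$. This piece is therefore at most $C_{T,R}h$.

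Combining the two cases gives $\Psi_n(h)\le C_{T,R}(h^{1/2}+h)$ for $h\le 1$. For $h\ge1$ we use the crude bound $\Psi_n(h)\le 4\sup_r\E\abs{D_rX_t^{n,R}}^2\,T\le C_{T,R}$, again by \eqref{eq0719a}. Hence
\[
\int_0^T h^{-1-\alpha}\Psi_n(h)\dd h\le C_{T,R}\int_0^1 h^{-1/2-\alpha}\dd h+C_{T,R}\int_1^{T\vee1}h^{-1-\alpha}\dd h<\infty .
\]
The first integral converges precisely because $\alpha<1/2$. All constants are independent of $n$, so taking the supremum over $n$ yields \eqref{eq:fractional}, and hence MC2(ii).

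The analytic work is entirely contained in Proposition~\ref{prop:short-interval}, so the only delicate point here is bookkeeping. We must check that the case $r+h>t$, where one derivative vanishes, really costs only $O(h)$ through the uniform pointwise bound \eqref{eq0719a}, and that the exponent $1/2$ from \eqref{eq:short-interval} is what forces the restriction $\alpha<1/2$.
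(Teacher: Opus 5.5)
Your proposal is correct and follows the paper's proof. The paper also splits the double integral into the region where both parameters are at most $t$, which after the substitution $r'=r+h$ is bounded by Proposition~\ref{prop:short-interval}, and the cross region where one derivative vanishes, which is bounded by the uniform estimate \eqref{eq0719a}. You organize the cross region by lag, at a cost of $O(h)$ per lag, whereas the paper integrates $(r'-r)^{-1-\alpha}$ over it directly. Your separate crude bound for $h\ge 1$ is a harmless extra precaution, since the supporting lemmas are stated only for $h\le 1$.
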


\begin{proof}
	Fix $t\in[0,T]$. If $t=0$, then the estimate is trivial. Hence we
assume $t>0$. Note that
\[
D_rX^{n,R}_t=0,
\quad r>t.
\]
Therefore, the double integral over $[0,T]^2$ decomposes as
\begin{align}
 	\mathbb E
 \int_0^T\int_0^T
 \frac{
 	\left|
 	D_rX^{n,R}_t
 	-
 	D_{r'}X^{n,R}_t
 	\right|^2
 }
 {|r-r'|^{1+\alpha}}
 \,\dd r\,\dd r'=&\E\int_0^t\int_0^t
 \frac{\abs{D_rX_t^{n,R}-D_{r'}X_t^{n,R}}^2}
 {\abs{r-r'}^{1+\alpha}}\dd r\dd r'\notag\\
 &\quad+2\E\int_0^t\int_t^T
 \frac{\abs{D_rX_t^{n,R}}^2}{(r'-r)^{1+\alpha}}\dd r'\dd r.
 \label{eq:fractional-decomp}
\end{align}
Note that, we have 
\begin{align*}
 &\E\int_0^t\int_0^t
 \frac{\abs{D_rX_t^{n,R}-D_{r'}X_t^{n,R}}^2}
 {\abs{r-r'}^{1+\alpha}}\dd r\dd r'\\
 &=2\int_0^t h^{-1-\alpha}
 \E\int_0^{t-h}
 \abs{D_{r+h}X_t^{n,R}-D_rX_t^{n,R}}^2\dd r\dd h.
\end{align*}
Applying  Proposition \ref{prop:short-interval}, we have 
	\begin{equation}\label{eq:inner-bound}
	\begin{aligned}
			\sup_n \mathbb E
		\int_0^t\int_0^t
		\frac{
			\left|
			D_rX^{n,R}_t
			-
			D_{r'}X^{n,R}_t
			\right|^2
		}
		{|r-r'|^{1+\alpha}}
		\,\dd r\,\dd r'
		&
		\leq
		C_{T,R}
		\int_0^t
		h^{-1-\alpha}h^{1/2}\,\dd h
		\\
		&
		=
		C_{T,R}
		\int_0^t
		h^{-1/2-\alpha}\,\dd h\\
		& \leq  	C_{T,R, \alpha}.
	\end{aligned}
\end{equation}
For the cross region, Corollary  \eqref{coll1} gives
\[
 2\E\int_0^t\int_t^T
 \frac{\abs{D_rX_t^{n,R}}^2}{(r'-r)^{1+\alpha}}\dd r'\dd r
 \le C_{T,R}\int_0^t\int_t^T(r'-r)^{-1-\alpha}\dd r'\dd r \leq  	C_{T,R, \alpha}.
\]
 Combining the two estimates proves \eqref{eq:fractional}.
\end{proof}

\section{Verification of MC3}\label{sec7}
\setcounter{equation}{0}
\renewcommand{\theequation}{\thesection.\arabic{equation}}
\renewcommand{\theHequation}{\thesection.\arabic{equation}}

This section verifies MC3 by showing that $L^2$-convergence of the localized approximations implies convergence of both the drift and diffusion coefficients in the required integral norms. 

\begin{proposition}[Convergence of the coefficients]\label{prop:MC3}
Let $n_m\to\infty$ and suppose
\begin{equation}\label{eq:MC3-assumed-conv}
 X^{n_m,R}\longrightarrow X^R
 \quad\text{in }L^2([0,T]\times\Omega;\R^K).
\end{equation}
Then, for each  $i=1,\ldots,K$,
\begin{equation}\label{eq:drift-MC3}
 \int_0^T\E\abs{b_i(X_s^{n_m,R})-b_i(X_s^R)}\dd s
 \longrightarrow0,
\end{equation}
\begin{equation}\label{eq:diff-MC3}
 \int_0^T\E\abs{\sigma_i^{n_m}(X_s^{n_m,R})
 -\sigma_i(X_s^R)}^2\dd s
 \longrightarrow0.
\end{equation}
Consequently, for every $t\in[0,T]$,
\begin{equation}\label{eq:stoch-int-MC3}
 \int_0^t\sigma_i^{n_m}(X_s^{n_m,R})\dd W_s^i
 \longrightarrow
 \int_0^t\sigma_i(X_s^R)\dd W_s^i
 \quad\text{in }L^2(\Omega).
\end{equation}
Thus MC3 holds.
\end{proposition}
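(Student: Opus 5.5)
The plan is to work on the bounded localized state space and use uniform boundedness together with convergence in probability. By Lemma \ref{lem:positivity-increments}, $X^{n,R}_s$ stays in the compact set $\mathcal K_R:=\{x\in\R_+^K:\abs{x}\le R+1\vee\abs{x_0}\}$. Since $\mathcal K_R$ is closed and $X^{n_m,R}\to X^R$ in $L^2([0,T]\times\Omega)$, we may modify $X^R$ on a $\dd s\otimes\dd\Pp$-null set so that $X^R_s\in\mathcal K_R$. Convergence in $L^2$ also gives convergence in $\dd s\otimes\dd\Pp$-measure on $[0,T]\times\Omega$. Every integrand in \eqref{eq:drift-MC3}--\eqref{eq:diff-MC3} is uniformly bounded by a constant $C_R$, because $b_i$ and $\gamma_i$ are bounded on $\mathcal K_R$ and $p_n(u)\le u$. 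Hence, by dominated convergence (or Vitali) along the finite measure space $[0,T]\times\Omega$, it suffices to prove that the integrands converge to zero in measure.

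For the drift, $b_i$ is $C^2$ on a neighborhood of $\mathcal K_R$, so it is Lipschitz there with constant $L_R$. Then
\[
\int_0^T\E\abs{b_i(X^{n_m,R}_s)-b_i(X^R_s)}\dd s\le L_R\int_0^T\E\abs{X^{n_m,R}_s-X^R_s}\dd s\to0
\]
by Cauchy--Schwarz, which gives \eqref{eq:drift-MC3} directly.

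For the diffusion coefficient, I split
\[
\sigma_i^{n}(X^{n,R})-\sigma_i(X^R)=\bigl[\sigma_i^{n}(X^{n,R})-\sigma_i(X^{n,R})\bigr]+\bigl[\sigma_i(X^{n,R})-\sigma_i(X^R)\bigr].
\]
\begin{enumerate}[label=\textup{(\alph*)},leftmargin=2.5em]
\item The first bracket converges to zero uniformly on $\mathcal K_R$. We have $\sqrt u-g_n(u)=\sqrt u\bigl(1-\sqrt{u/(u+\epsilon_n)}\bigr)\le\sqrt{\epsilon_n}$; indeed, for $u\le\epsilon_n$ both terms are at most $\sqrt{\epsilon_n}$, and otherwise $1-\sqrt{u/(u+\epsilon)}\le\epsilon/u$, which gives a bound $\epsilon/\sqrt u\le\sqrt\epsilon$. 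Writing $\sqrt{x_ix_{i+1}}-g(x_i)g(x_{i+1})=(\sqrt{x_i}-g(x_i))\sqrt{x_{i+1}}+g(x_i)(\sqrt{x_{i+1}}-g(x_{i+1}))$ and using that $\chi_i$ is bounded, we get $\sup_{\mathcal K_R}\abs{\sigma_i^n-\sigma_i}\le C_R\sqrt{\epsilon_n}$.
\item For the second bracket I use the H\"older continuity of the square root. Since $\chi_i$ is Lipschitz on $\mathcal K_R$ and $\abs{\sqrt a-\sqrt b}^2\le\abs{a-b}$, we obtain $\abs{\sigma_i(x)-\sigma_i(y)}^2\le C_R(\abs{x-y}^2+\abs{x_ix_{i+1}-y_iy_{i+1}})\le C_R\abs{x-y}$ on $\mathcal K_R$. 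Integrating and applying Cauchy--Schwarz then yields \eqref{eq:diff-MC3}, so the measure-theoretic argument above is in fact not even needed.
\end{enumerate}

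Finally, \eqref{eq:stoch-int-MC3} follows from It\^o's isometry:
\[
\E\Bigl|\int_0^t(\sigma_i^{n_m}(X^{n_m,R}_s)-\sigma_i(X^R_s))\dd W^i_s\Bigr|^2
\]
equals the integral in \eqref{eq:diff-MC3} restricted to $[0,t]$, which tends to zero.

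One point needs care. Both integrands must be progressively measurable, and $X^R$ must be nonnegative so that $\sigma_i(X^R)$ is defined. Both properties are provided by Step 2 of the proof of Theorem \ref{thm:MC-implies}. Nonnegativity of $X^R$ is essential, since $\sigma_i$ is only defined on $\R_+^K$.

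I expect the main obstacle to be the uniform comparison $\abs{\sqrt u-g_n(u)}\le\sqrt{\epsilon_n}$ near $u=0$, where the regularization is singular. I will settle it with the explicit case split in (a) rather than any derivative bound.
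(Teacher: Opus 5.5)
Your proposal is correct and follows essentially the same route as the paper. Both arguments split the diffusion error into the uniform regularization error $\sup_{\mathcal K_R}|\sigma_i^{n}-\sigma_i|^2\le C_R\epsilon_n$ and the $\frac12$-H\"older bound $|\sigma_i(x)-\sigma_i(y)|^2\le C_R|x-y|$ on the bounded localized state space. Both then use Cauchy--Schwarz against the assumed $L^2$ convergence, Lipschitz continuity for the drift, and It\^o's isometry for the stochastic integrals. Your explicit case split for $|\sqrt u-g_n(u)|\le\sqrt{\epsilon_n}$ and your closedness argument placing $X^R$ in $\mathcal K_R\subset\R_+^K$ just fill in details the paper leaves implicit.
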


\begin{proof}
It is easy to verify   \eqref{eq:drift-MC3}. We now prove \eqref{eq:diff-MC3}. Note that,  the coefficients of the localized equation  \eqref{eq:localized-approx} vanish
outside the ball $\{|z|<R+1\}$. Therefore, 
there exists a deterministic constant $B_R$ such that
$
|X_s^{n_m, R}|\le B_R. 
$
After passing to an
convergent subsequence, the same bound holds for
$X^R$, namely
$
|X^R_s|\le B_R, 
$
a.s..

For $z,  x\in[0,B_R]^K$,  we have 
\begin{align}
 \abs{\sigma_i^n(z)-\sigma_i(z)}^2
 &=\gamma_i(z)
 \abs{g_n(z_i)g_n(z_{i+1})-\sqrt{z_iz_{i+1}}}^{2}\notag\\
 &\le2\overline\gamma g_n(z_i)^2
 \abs{g_n(z_{i+1})-\sqrt{z_{i+1}}}^{2}\notag\\
 &\quad+2\overline\gamma z_{i+1}
 \abs{g_n(z_i)-\sqrt{z_i}}^{2}\notag\\
 &\le C_R\epsn,
 \label{eq:sigma-reg-error}
\end{align}
and
\begin{equation}\label{eq:sigma-state-holder}
 \abs{\sigma_i(z)-\sigma_i(x)}^2
 \le C_R\abs{z-x}.
\end{equation}

Decompose
\begin{align*}
 &\int_0^T\E\abs{\sigma_i^{n_m}(X_s^{n_m,R})
 -\sigma_i(X_s^R)}^2\dd s\\
 &\quad\le2\int_0^T\E\abs{\sigma_i^{n_m}(X_s^{n_m,R})
 -\sigma_i(X_s^{n_m,R})}^2\dd s\\
 &\qquad+2\int_0^T\E\abs{\sigma_i(X_s^{n_m,R})
 -\sigma_i(X_s^R)}^2\dd s.
\end{align*}
By \eqref{eq:sigma-reg-error}, the first term is at most $C_{T,R}\eps_{n_m}$ and tends to zero.  By \eqref{eq:sigma-state-holder} and \eqref{eq:MC3-assumed-conv} , the second also tends to zero. This proves \eqref{eq:diff-MC3}.  Finally, \eqref{eq:stoch-int-MC3} follows immediately from It\^o's isometry.
\end{proof}

\subsection{Completion of the strong-solution construction}\label{subsec:completion}

\begin{proof}[Proof of Theorem \ref{thm:main}]
Fix $T,R>0$.  The uniform supremum and time-increment estimates in Lemma 
\ref{lem:positivity-increments} verify MC1, with $\theta=1$.
The weighted tangent estimate Corollary \ref{coll1} verifies MC2(i), and Proposition
\ref{prop:fractional} verifies MC2(ii), for every $\alpha\in(0,1/2)$.
Proposition \ref{prop:MC3} verifies MC3.  All constants are independent of the regularization index $n$. Therefore, Theorem \ref{thm:MC-implies} applys.  The nonexplosion estimate and the second-moment bound are established in Steps 5 of the proof of Theorem \ref{thm:MC-implies}. This proves \eqref{eq:global-second-moment} and completes the proof of Theorem \ref{thm:main}.
\end{proof}

\begin{appendices}

\section{The far-region cross-term absorption}
\setcounter{equation}{0}
\renewcommand{\theequation}{\thesection.\arabic{equation}}
For completeness we prove the algebraic estimate used in Lemma \ref{lem:rowwise}. 
\begin{lemma}[Far-region absorption]\label{lem:affine-far-absorption}
Let
\[
 g(u)=\frac{u}{\sqrt{u+\varepsilon}},\qquad
 p(u)=\frac{u^2}{u+\varepsilon},\qquad
 0<\eta\le\frac{\varepsilon}{8}.
\]
Suppose $0\le\bar\gamma\le4\bar q$ and that $\bar q,\bar\gamma$ range in a
fixed bounded set. With $d=x+\eta$, define
\begin{align*}
 A(x,y)&=\bar q\frac{y}{d^2}
 -\bar\gamma p(y)\frac{|dg'(x)-g(x)|^2}{d^3},\\
 H(x,y)&=\frac{2\bar q}{d}
 +\bar\gamma\left[
  \frac{p'(x)p'(y)}{2d}-\frac{p(x)p'(y)}{d^2}
 \right].
\end{align*}
Then $A(x,y)\ge0$, and there is a constant $C$, uniform over the stated bounded
parameter set, such that
\begin{equation}\label{eq:affine-far-absorption}
 H(x,y)^2(y+\eta)\le CA(x,y)
 \quad\text{whenever }y+\eta>4(x+\eta).
\end{equation}
\end{lemma}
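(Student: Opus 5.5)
The proof is elementary algebra in the variables $x,y\ge0$. I plan to use the scaling $x=\varepsilon a$, $y=\varepsilon b$, $\eta=\varepsilon\delta$ with $0<\delta\le1/8$, as in the proof of Lemma~\ref{lem:regularization}.

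\textbf{Positivity of $A$.} Rewrite $A$ using $\alpha_\eta(x)=|dg'(x)-g(x)|^2/d$:
\[
A(x,y)=\frac{1}{d^2}\bigl(\bar q\,y-\bar\gamma\,p(y)\,\alpha_\eta(x)\bigr).
\]
By \eqref{eq:alpha-bound}, $\alpha_\eta\le 1/4$. By \eqref{eq:p-basic}, $p(y)\le y$. Together with $\bar\gamma\le4\bar q$ this gives $\bar\gamma p(y)\alpha_\eta(x)\le \bar q y$, so $A\ge0$. The same computation also gives the better lower bound
\[
d^2A\ \ge\ \bar q\,(y-p(y))+\bar\gamma\,p(y)\Bigl(\tfrac14-\alpha_\eta(x)\Bigr).
\]
Here $y-p(y)=\varepsilon y/(y+\varepsilon)$. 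The explicit quartic from the proof of Lemma~\ref{lem:regularization} shows
\[
\tfrac14-\alpha_\eta(x)=\frac{\text{cubic}(a)}{4(a+\delta)(a+1)^3}\ \gtrsim\ \frac{1}{1+a}.
\]
This holds because the cubic has a positive constant term $\delta(1-4\delta)$ and positive leading coefficient, so it dominates $(a+\delta)(a+1)^2$ up to constants.

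\textbf{Bounding $H$.} Use $|p'|\le1$ and $p(x)/d\le1$ to get $|H|\le C/d$, so $H^2(y+\eta)\le C(y+\eta)/d^2$. The target inequality therefore reduces to showing, in the region $y+\eta>4d$,
\[
(y+\eta)\ \lesssim\ \bar q\,\frac{\varepsilon y}{y+\varepsilon}+\bar\gamma\,\frac{p(y)}{1+x/\varepsilon}.
\]
This naive form is too lossy, since $\bar q,\bar\gamma$ may be small. So I instead keep track of the $\bar q,\bar\gamma$ dependence explicitly and bound $H^2\le 2(4\bar q^2+C\bar\gamma^2)/d^2$. I then verify separately that $\bar q^2(y+\eta)\lesssim \bar q\,d^2A$ and $\bar\gamma^2(y+\eta)\lesssim\bar\gamma\,d^2A$, with constants uniform over the bounded parameter set. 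Because the parameters are bounded, $\bar q^2\le C\bar q$ and $\bar\gamma^2\le C\bar\gamma$. However, the $\bar q$-part of $A$ only controls $\bar q\,\varepsilon y/(y+\varepsilon)$, which is not comparable to $y$ when $y\gg\varepsilon$.

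\textbf{Main obstacle.} The key difficulty is the regime $y\gg\varepsilon$. There the crude bound $|H|\le C/d$ fails, and a cancellation inside $H$ must be used. For $y\gg\varepsilon$ we have $p'(y)\approx1$, and
\[
H\approx \frac{1}{d}\Bigl(2\bar q+\bar\gamma\bigl(\tfrac{p'(x)}2-\tfrac{p(x)}{d}\bigr)\Bigr).
\]
When $x$ is large relative to $\varepsilon$, the bracket tends to $\tfrac12-1=-\tfrac12$. In that case $\tfrac14-\alpha_\eta\to0$ as well, so the $\bar\gamma$ term in $A$ weakens. I would handle this by splitting into cases $x\le M\varepsilon$ and $x>M\varepsilon$.

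For $x\le M\varepsilon$, we have $d\asymp\varepsilon$ and $\tfrac14-\alpha_\eta\gtrsim1$. Then $d^2A\gtrsim \bar q\varepsilon+\bar\gamma p(y)\gtrsim(\bar q+\bar\gamma)\,y$ for $y\ge\varepsilon$, which dominates $d^2H^2(y+\eta)\lesssim(\bar q^2+\bar\gamma^2)(y+\eta)$.

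For $x>M\varepsilon$ and $y>4d$, I would expand the bracket in powers of $\varepsilon/x$, getting $\tfrac{p'(x)}{2}-\tfrac{p(x)}{d}=-\tfrac12+O(\varepsilon/x)$, and similarly $\tfrac14-\alpha_\eta=c\,\varepsilon/x+O((\varepsilon/x)^2)$. I would then compare the leading-order terms directly, using $\bar\gamma\le4\bar q$.

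The case $y\lesssim\varepsilon$ is easy, since there $y-p(y)\asymp y$.

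I expect the large-$x$ comparison to be the delicate step. If it does not close with the leading-order expansions, the fallback is to clear denominators and check that the resulting polynomial in $a,b$ is nonnegative on $\{b+\delta>4(a+\delta)\}$.
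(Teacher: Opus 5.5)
Your route is the paper's in substance: scaling, the bound $\tfrac14-\alpha_\eta\gtrsim(1+x/\varepsilon)^{-1}$, and the cancellation in $H$ when $p'(y)\approx1$ and $x\gg\varepsilon$. As written, though, it has a concrete hole, and it comes from the lower bound you chose for $A$. The bound $d^2A\ge\bar q(y-p(y))+\bar\gamma p(y)(\tfrac14-\alpha_\eta(x))$ discards the term $(\bar q-\tfrac{\bar\gamma}{4})p(y)$. That term is exactly what must absorb the part of $H$ that does not cancel when $\bar q>\bar\gamma/4$.

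The damage already shows in your case $x\le M\varepsilon$, $y\ge\varepsilon$. There the step $\bar q\varepsilon+\bar\gamma p(y)\gtrsim(\bar q+\bar\gamma)y$ is false when $\bar\gamma\ll\bar q$ (take $\bar\gamma=0$ and $y\gg\varepsilon$). The conclusion $d^2A\gtrsim(\bar q+\bar\gamma)y$ is still true in that case, but only via $d^2A=\bar qy-\bar\gamma p(y)\alpha_\eta\ge4\bar qy(\tfrac14-\alpha_\eta)$.

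In the case $x>M\varepsilon$ the loss is not cosmetic. Off the critical ratio $dH\approx2(\bar q-\tfrac{\bar\gamma}{4})\neq0$, and $4(\bar q-\tfrac{\bar\gamma}4)^2(y+\eta)/d^2$ can only be absorbed by $(\bar q-\tfrac{\bar\gamma}4)y/d^2$. What your bound retains is roughly $\bar q\varepsilon+\bar\gamma y\varepsilon/x$, which is $o(y)$ as $x/\varepsilon\to\infty$. This large-$x$ comparison is the heart of the lemma, and you leave it as a sketch with a polynomial fallback, so the proposal does not yet establish the estimate.

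The missing step is to split off the excess over the critical ratio exactly. The paper does this with $\lambda=\bar q/\bar\gamma$, $h_\lambda=h_{1/4}+2(\lambda-\tfrac14)/(u+\delta)$ and $d_\lambda=d_{1/4}+(\lambda-\tfrac14)v$. In your notation:
\begin{itemize}
\item $d^2A=(\bar q-\tfrac{\bar\gamma}4)y+\bar\gamma(\tfrac y4-p(y)\alpha_\eta)\ge(\bar q-\tfrac{\bar\gamma}4)y+\bar\gamma y(\tfrac14-\alpha_\eta)$;
\item $dH=2(\bar q-\tfrac{\bar\gamma}4)+\bar\gamma E$, with $E=\tfrac12+p'(y)\bigl(\tfrac{p'(x)}2-\tfrac{p(x)}d\bigr)$.
\end{itemize}
Write $a=x/\varepsilon$. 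You already have $\tfrac14-\alpha_\eta\ge\frac1{8(1+a)}$; coefficientwise, the cubic dominates $\tfrac12(a+\delta)(a+1)^2$ when $\delta\le\tfrac18$. In the far region $y\ge4x$, so $1-p'(y)=\varepsilon^2/(y+\varepsilon)^2\le(1+a)^{-2}$. Moreover $\tfrac12+\tfrac{p'(x)}2-\tfrac{p(x)}d=\frac{(1+\delta)a+\delta}{(a+1)(a+\delta)}-\frac1{2(a+1)^2}$. Together these give $|E|\le3/(1+a)$.

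Using $y+\eta\le\tfrac43y$ in the far region, you get $d^2H^2(y+\eta)\lesssim(\bar q-\tfrac{\bar\gamma}4)^2y+\bar\gamma^2y(1+a)^{-2}\lesssim d^2A$. The constants depend only on the parameter bounds. The point is that the defect in $H$ enters squared, while the gain in $A$ is first order in $(1+a)^{-1}$. This is precisely the paper's pair of bounds $|h_{1/4}|\le C/u^2$ and $d_{1/4}\ge3v/(64(u+1))$ for $u\ge1$. With these exact bounds you need no expansion, no case split in $x$ or $y$, and no polynomial check.
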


\begin{proof}
The assertion is trivial when $\bar\gamma=\bar q =0$  and  $\bar\gamma=0 < \bar q $ . Scale
$x=\varepsilon u$, $y= \varepsilon v$, and
$\eta=\varepsilon\delta$, where $0<\delta\le1/8$. Let 
\[
 \bar p(u)=\frac{u^2}{u+1},\qquad
 \bar p'(u)=1-\frac1{(u+1)^2},
\]
and
\[
 a_\delta(u)=
 \frac{(u^2-\delta u-2\delta)^2}
 {4(u+\delta)(u+1)^3}.
\]
The far-region condition becomes $v+\delta>4(u+\delta)$. If $\bar\gamma>0$, write 
$\lambda=\bar q/\bar\gamma\ge1/4$ and define
\begin{align*}
 h_\lambda(u,v)&=
 \frac{2\lambda}{u+\delta}
 +\frac{\bar p'(u)\bar p'(v)}{2(u+\delta)}
 -\frac{\bar p(u)\bar p'(v)}{(u+\delta)^2},\\
 d_\lambda(u,v)&=\lambda v-\bar p(v)a_\delta(u).
\end{align*}
After scaling, it suffices to prove the following  estimate:
\begin{equation}\label{eq:affine-scaled-goal}
 h_\lambda(u,v)^2(v+\delta)
 \le C\lambda\frac{d_\lambda(u,v)}{(u+\delta)^2}.
\end{equation}

For $\lambda=1/4$, a direct expansion gives
\begin{align}
 \frac14-a_\delta(u)
 &=\frac{P_\delta(u)}{4(u+\delta)(u+1)^3},\label{eq:affine-Pdelta}\\
 P_\delta(u)
 &=(3+3\delta)u^3+(3+7\delta-\delta^2)u^2\notag\\
 &\quad +(1+3\delta-4\delta^2)u+\delta(1-4\delta),
\end{align}
whose coefficients are nonnegative. If $0\le u\le1$, then
$a_\delta(u)\le1/8$. 
Consequently,
\[
 d_{1/4}(u,v)
 =\left(\frac14-a_\delta(u)\right)\bar p(v)
 +\frac14\bigl(v-\bar p(v)\bigr)\ge\frac18v.
\]
Moreover, $\abs{h_{1/4}(u,v)}\le 2/(u+\delta)$.  Hence
\eqref{eq:affine-scaled-goal} holds for $0 \leq u\le1$.

If $u\ge1$, \eqref{eq:affine-Pdelta} implies
\[
 \frac14-a_\delta(u)\ge\frac{3}{32(u+1)}.
\]
The far-region condition implies $v\ge4u$ and $\bar p(v)\ge v/2$, so
\begin{equation}\label{eq:affine-d-lower-large}
 d_{1/4}(u,v)\ge\frac{3v}{64(u+1)}.
\end{equation}
Using $\bar p'(w)-1=-(w+1)^{-2}$ and $v \geq 4u$, then 
\begin{equation}\label{a7}
\begin{aligned}
| h_{1/4}(u,v)| 
 &=\left|\left[\frac1{u+\delta}
 -\frac{\bar p(u)}{(u+\delta)^2}\right]
 +\frac{\bar p'(u)\bar p'(v)-1}{2(u+\delta)}
-\frac{\bar p(u)(\bar p'(v)-1)}{(u+\delta)^2}\right|\\
& \leq \frac{C}{u^2}.
\end{aligned}
\end{equation}
Combining \eqref{eq:affine-d-lower-large}  and  \eqref{a7}. 
This proves \eqref{eq:affine-scaled-goal} for $u\ge1$.

Finally, for $\lambda>1/4$,
\[
h_\lambda=h_{1/4}+\frac{2(\lambda-1/4)}{u+\delta},
\qquad
d_\lambda=d_{1/4}+(\lambda-1/4)v.
\]
Using $(a+b)^2\le2a^2+2b^2$ and $v+\delta\le C v$ proves
\eqref{eq:affine-scaled-goal} for every $\lambda\ge1/4$.  Scaling back gives
\eqref{eq:affine-far-absorption}.
\end{proof}

\end{appendices}

\end{document}